\numberwithin{equation}{section}
\newtheorem{thm}{Theorem}[section]
\newtheorem{lem}[thm]{Lemma}
\newtheorem{conj}[thm]{Conjecture}
\newtheorem{cor}[thm]{Corollary}
\theoremstyle{definition}
\theoremstyle{remark}
\newtheorem{remark}{Remark}[section]
\newtheorem*{remark*}{Remark}
\newtheoremstyle{claim} 
    {1em}                    
    {1em}                    
    {}                   
    {}                           
    {\bfseries}                   
    {.}                          
    {.5em}                       
    {}  
\theoremstyle{claim}
\newcommand{\m}{\mathfrak{m}}
\newcommand{\bs}\boldsymbol{}
\DeclareMathOperator{\sym}{sym}
\DeclareMathOperator{\sgn}{sgn}
\DeclareMathOperator{\SL}{SL}
\newcommand{\brac}[1]{\langle#1\rangle}
\renewcommand{\tilde}{\widehat}
\renewcommand{\tilde}{\widetilde}
\renewcommand{\phi}{\varphi}
\renewcommand{\Re}{{\rm Re}}
\renewcommand{\Im}{{\rm Im}}
\renewcommand{\bar}[1]{\overline{#1}}
\renewcommand{\mod}[1]{\,({\rm mod}\,#1)}
\definecolor{red}{rgb}{1,0,0}
\definecolor{orange}{rgb}{0.7,0.3,0}
\definecolor{blue}{rgb}{.2,.6,.75}
\definecolor{green}{rgb}{.4,.7,.4}
\begin{document}

\title{Quantum variance for holomorphic Hecke cusp forms on the vertical geodesic}

\author{Peter Zenz}
\email{peter.zenz@mail.mcgill.ca}

%

\begin{abstract}We compute the quantum variance of holomorphic cusp forms on the vertical geodesic for smooth, compactly supported test functions. The variance is related to an averaged shifted-convolution problem that we evaluate asymptotically. We encounter an off-diagonal term that matches exactly with a certain diagonal term, a feature reminiscent of moments of $L$-functions.
\end{abstract}

\maketitle


\section{Introduction}
The \textit{Quantum Unique Ergodicity} (QUE) conjecture, introduced by Rudnik and Sarnak \cite{rudnickBehaviourEigenstatesArithmetic1994}, is concerned about the equidistribution of Hecke eigenforms as the Laplace eigenvalue tends to infinity. More precisely, consider a smooth compactly supported function $\psi$ on $X:=SL_2(\mathbb{Z})\backslash \mathbb{H}$ and let $f(z)$ be a $L^2$-normalized Hecke Maass cusp form with Laplace eigenvalue $\lambda$. Then QUE predicts that
$$\int_{X} \psi(z)|f(z)|^2 \frac{dxdy}{y^2} \to \frac{3}{\pi} \int_{X} \psi(z) \frac{dxdy}{y^2}$$ as $\lambda \to \infty$. This conjecture is now a known theorem by the work of Lindenstrauss \cite{lindenstraussInvariantMeasuresArithmetic2006} and Soundararajan \cite{soundararajanQuantumUniqueErgodicity2010}. We can formulate an analogous conjecture for holomorphic Hecke cusp forms, where in the relation above we replace $f(z)$ with $f(z)y^{k/2}$ and the limit is the same, though this time it is $k$ that tends to infinity. This problem was solved by Holowinsky and Soundararajan in \cite{holowinskyMassEquidistributionHecke2010c}.

Going further, we can raise questions  about the distribution of eigenforms in certain subsets of the fundamental domain. In \cite{youngQuantumUniqueErgodicity2016}, Young considers several problems in that direction, such as QUE restricted to geodesics, horocycles or shrinking discs. Among other results, he showed that Eisenstein series satisfy QUE restricted to the vertical geodesic. In this paper we study the distribution of holomorphic Hecke cusp forms on the vertical geodesic. In this case, Young puts forward the following QUE conjecture:
\begin{conj}\cite[Conjecture 1.1]{youngQuantumUniqueErgodicity2016}\label{verticalgeo}
Suppose that $\psi\colon \mathbb{R}^+ \to \mathbb{R}$ is a smooth, compactly-supported function. Then
$$\lim_{k\to \infty} \int_0^\infty y^{k} |f(iy)|^2 \psi(y) \frac{dy}{y} = \frac{3}{\pi} \int_0^\infty \psi(y)\frac{dy}{y},$$
where $f(z)$ runs over weight $k$ holomorphic Hecke cusp forms that are $L^2$-normalized. 
\end{conj}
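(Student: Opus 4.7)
The plan is to expand $|f(iy)|^2$ through the $q$-expansion of $f$, apply Mellin inversion to separate $\psi$ from the Gaussian factor, and split the resulting double sum over Hecke eigenvalues into a diagonal main term and a shifted-convolution off-diagonal. Writing $f(iy)=\sum_{n\ge1} a_f(n)e^{-2\pi ny}$ with $a_f(n)=\lambda_f(n) n^{(k-1)/2}/\sqrt{c_f}$, where the Petersson-norm factor satisfies $c_f^{-1}\asymp (4\pi)^{k-1}/(\Gamma(k-1)L(1,\sym^2 f))$, and inserting $\psi(y)=\frac{1}{2\pi i}\int_{(2)}\widetilde\psi(s)y^{-s}\,ds$ yields
\[
\int_0^\infty y^k|f(iy)|^2\psi(y)\frac{dy}{y}=\frac{1}{c_f}\sum_{m,n\ge1}\lambda_f(m)\lambda_f(n)(mn)^{(k-1)/2}\,I_{m+n}(k),
\]
where $I_N(k)=\frac{1}{2\pi i}\int_{(2)}\widetilde\psi(s)\Gamma(k-s)(2\pi N)^{s-k}\,ds$. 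Stirling localizes $I_N(k)$ to $N\asymp k$, so the effective range is $m,n\asymp k$ with $|m-n|\ll k^{1/2+\eps}$.

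The diagonal $m=n$ is evaluated through the Rankin-Selberg identity $\sum_n \lambda_f(n)^2 n^{-s}=\zeta(s)L(s,\sym^2 f)/\zeta(2s)$: shifting contours past $s=1$ picks up a residue proportional to $L(1,\sym^2 f)/\zeta(2)$, and the factor $L(1,\sym^2 f)$ cancels the corresponding factor hidden in $c_f^{-1}$, leaving exactly $\frac{3}{\pi}\int_0^\infty\psi(y)\,dy/y$ plus a negligible contour-shift error. The off-diagonal regroups by $h=m-n\neq 0$ into
\[
\frac{1}{c_f}\sum_{0<|h|\ll k^{1/2+\eps}}\sum_{n}\lambda_f(n)\lambda_f(n+h)\,W_h(n,k)
\]
with explicit smooth weights $W_h$ supported on $n\asymp k$; establishing that this is $o(1)$ as $k\to\infty$ for every individual Hecke eigenform $f$ is the heart of the problem.

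The key obstacle is that Watson's triple-product formula is unavailable here, because $\psi(y)$ is not automorphic on $X$, and so one cannot reduce the problem directly to subconvexity for a single $L$-function as in the Holowinsky-Soundararajan treatment of full QUE. The strategy I would pursue is still a dichotomy in the spirit of \cite{holowinskyMassEquidistributionHecke2010c}, but rerouted through shifted convolutions. In the "sieve regime", where $|\lambda_f(p)|$ is small in mean-square over primes $p\le k^\delta$, a Selberg sieve argument gives $\sum_{n\le k}|\lambda_f(n)|\ll k/(\log k)^{1/2-\eps}$, and combining this with Cauchy-Schwarz in $n$ and a trivial bound in $h$ annihilates the off-diagonal. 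In the complementary "subconvexity regime", where $L(1,\sym^2 f)$ is close to its conjectural maximum, one processes each shifted convolution by a $\delta$-symbol decomposition of Duke-Friedlander-Iwaniec type, converting the inner sum into averages over characters $\chi$ of bounded conductor and reducing matters to weight-aspect subconvex bounds for $L$-functions of the shape $L(1/2,\sym^2 f\otimes\chi)$.

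The hardest step will be calibrating these two regimes so that every weight-$k$ Hecke eigenform falls into at least one of them, with enough slack to close the sum over $h$ in the range $|h|\ll k^{1/2+\eps}$. Unlike the classical setting, where Watson's identity provides a clean single-$L$-function link between the off-diagonal and subconvexity, here the reduction must be carried out uniformly in the shift $h$ for a single fixed form, with power-of-log savings surviving the $h$-summation; in particular the sieve-regime bound must be pushed to match the $L$-function strategy at the boundary between the two regimes. Securing such a uniform individual shifted-convolution estimate is, in my view, the principal technical barrier separating the quantum-variance result of this paper from the pointwise conjecture.
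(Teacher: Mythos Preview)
The statement you are addressing is Conjecture~\ref{verticalgeo}, which the paper does \emph{not} prove; it is explicitly labelled a conjecture and the paper states that it is ``out of reach of current technology.'' There is therefore no proof in the paper to compare against. What the paper actually establishes is the averaged quantum-variance statement (Theorem~\ref{mainthm}), from which quantum ergodicity for \emph{almost all} forms follows, but not QUE for each individual form.

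Your diagonal analysis is correct and mirrors the paper's computation of $\mathcal{E}_\psi$: the Rankin--Selberg residue at $s=0$ delivers exactly $\tfrac{3}{\pi}\int_0^\infty\psi(y)\,dy/y$. The genuine gap is in the off-diagonal, and your final paragraph already concedes it. But the Holowinsky--Soundararajan dichotomy you propose cannot close even in outline. In the full modular surface setting the shifts $h$ arising from the Poincar\'e decomposition are \emph{bounded}, so a saving of $(\log k)^{-\delta}$ in each shifted convolution $\sum_{n\le k}|\lambda_f(n)\lambda_f(n+h)|$ suffices. On the vertical geodesic the shift runs over $0<|h|\ll k^{1/2+\eps}$, and a log-power saving per shift, summed over $k^{1/2}$ values of $h$, leaves a quantity of size $k^{1/2}(\log k)^{-\delta}$ after normalization, not $o(1)$. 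Your sentence ``a trivial bound in $h$ annihilates the off-diagonal'' is therefore false: one needs a genuine power saving in $k$, uniformly in $h$, for a single form.

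The paper makes the obstruction concrete in another way: already the crude upper bound $\int_0^\infty y^k|f(iy)|^2\,dy/y\ll k^{\eps}$ would imply $L(1/2,f)\ll C^{1/8+\eps}$ with $C\asymp k^2$, a subconvex exponent not known even for $\zeta(s)$. Your $\delta$-symbol route in the ``subconvexity regime'' would thus have to manufacture bounds of at least this strength, which current methods do not provide. In short, your proposal is a fair map of where the difficulties lie, but it is not a proof, and neither branch of the dichotomy as stated can absorb the $k^{1/2}$-long range of shifts.
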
 
Young provides evidence for Conjecture \ref{verticalgeo} by translating the problem to a (shifted) moment of $L$-functions and invoking the random matrix theory conjectures (\cite{conreyIntegralMomentsLfunctions2002}). Conjecture \ref{verticalgeo} is out of reach of current technology. As Young mentions, already a sharp upper bound $\int_0^\infty |f(iy)|^2y^k \frac{dy}{y} \ll k^\varepsilon$, would imply a subconvexity bound for $L(f, 1/2) \ll C^{1/8+\varepsilon}$, where $C\asymp k^2$ is the analytic conductor of $L(s, f)$. A bound of this strength is currently not even known for the simpler Riemann zeta function. 

As a first step toward Conjecture \ref{verticalgeo}, it is therefore natural to ask whether equidistribution holds for \textit{almost all} eigenforms, similar to the initial papers on equidistribution results for Hecke cusp forms by Luo and Sarnak (\cite{luoQuantumErgodicityEigenfunctions1995} and \cite{luoandsarnakMassEquidistributionHecke}).  The equidistribution problem for almost all eigenforms is often termed \textit{Quantum Ergodicity} (QE) in the literature. The Quantum Ergodicity problem restricted to certain hypersurfaces was studied in the case of Maass forms, or Laplacian eigenfunctions in general, by Christianson-Toth-Zelditch in \cite{christiansonQuantumErgodicRestriction2012}, Dyatlov-Zworski in \cite{dyatlovQuantumErgodicityRestrictions2012} and Toth-Zelditch in \cite{tothQuantumErgodicRestriction2011}.  We will show such a QE result for holomorphic Hecke cusp forms on the vertical geodesic. In fact, it will be a direct consequence of the stronger \textit{quantum variance} result we will describe now:

Let $f$ be a holomorphic Hecke cusp form of weight $k$ for the full modular group $\Gamma=\SL_2(\mathbb{Z})$ on the upper half plane $\mathbb{H}$. We normalize $f$ such that $||f||_2^2=\brac{f,f}=1$, where
$$\brac{f,g}:=\int_{X}f(z)\overline{g(z)}y^{k}\frac{dxdy}{y^2}$$
is the standard Petersson inner product for two holomorphic cusp forms $f,g$ of weight $k$. For a smooth, compactly supported test function $\psi\colon \mathbb{R^+} \to \mathbb{R}$, we define
\begin{equation}\label{vertquant}
\mu_f(\psi)=\int_0^\infty |f(iy)|^2 y^{k} \psi(y)\frac{dy}{y} \quad \text{and} \quad \mathbb{E}(\psi)=\frac{3}{\pi}\int_0^\infty \psi(y)\frac{dy}{y}.
\end{equation}
 To measure how far $\mu_f(\psi)$ deviates from its expected value $\mathbb{E}(\psi)$ we introduce the so-called \textit{quantum variance} (for the vertical geodesic), which is (up to suitable normalization) given by
$$\sum_{K < k \leq 2K} \sum_{f\in H_k} \big| \mu_f(\psi)-\mathbb{E}(\psi)\big|^2,$$
where $H_k$ is a basis of Hecke cusp forms. Note, that we will work with even weight functions, i.e. $\psi(y)=\psi(1/y)$, since we can decompose a test function into its even and odd part and the quantum variance for an odd function is identically $0$. In particular, we also have $\tilde{\psi}(s)=\tilde{\psi}(-s)$, where
 $$\tilde{\psi}(s):=\int_0^\infty \psi(y^{-1})y^{s-1} dy.$$

The quantum variance problem was first investigated in a different setting by Zelditch in \cite{zelditchRateQuantumErgodicity1994}. Luo and Sarnak were the first to compute the asymptotic of the quantum variance for holomorphic Hecke cusp forms on the full fundamental domain (see \cite{luoQuantumVarianceHecke2004a}). Since then, several variants of this problems were explored. Zhao \cite{zhaoQuantumVarianceMaassHecke2010} considered the problem for Hecke Mass cusp forms and Sarnak-Zhao \cite{sarnakQuantumVarianceModular2013} further explored it for the modular surface. Closed geodesics on the modular surface were investigated my Luo, Rudnick and Sarnak in \cite{luoVarianceArithmeticMeasures2009}.  In the compact setting of quaternion algebras  Nelson \cite{nelsonQuantumVarianceQuaternion2016},\cite{nelsonQuantumVarianceQuaternion2017}, \cite{nelsonQuantumVarianceQuaternion2019} evaluated the quantum variance using the theta correspondence. Further papers indicating the active field of research are for example given by work of Huang \cite{huangQuantumVarianceEisenstein2021} for the variance of Eisenstein series, Huang-Lester \cite{huangQuantumVarianceDihedral2020} for the variance of dihedral Maass cusp forms and the work of Nordentoft, Petridis and Risager \cite{nordentoftSmallScaleEquidistribution2021b} on small scale equidistribution at infinity.

We explore for the first time the one-dimensional case of the vertical geodesic for holomorphic cusp forms and show
\begin{thm}\label{mainthm}Let $\psi_1,\psi_2$ and $h$ be  smooth compactly-supported functions on $\mathbb{R}^+$. Moreover, suppose that $\psi_i(y)=\psi_i(1/y)$ for $i=1,2$. Then 
\begin{equation}\label{thmequ1}\sum_{k\equiv 0\mod{2}} h\Big(\frac{k-1}{K}\Big) \sum_{f\in H_k} L(1, \sym^2 f)  \Big( \mu_f(\psi_1) - \mathbb{E}(\psi_1)\Big)\cdot \Big(\mu_f(\psi_2) - \mathbb{E}(\psi_2)\Big) = V(\psi_1, \psi_2)
\end{equation}
with 
\begin{align*}V(\psi_1, \psi_2)=&
K^{3/2}\log K\cdot \frac{\sqrt{2}\pi}{32}\tilde{\psi_1}(0)\tilde{\psi_2}(0)  \cdot \int_0^\infty \frac{h(\sqrt{u})u^{1/4}}{\sqrt{2\pi u}} du +\\
&+K^{3/2} \frac{\sqrt{2}\pi}{64}\tilde{\psi_1}(0)\tilde{\psi_2}(0)\int_0^\infty \frac{h(\sqrt{u})u^{1/4}}{\sqrt{2\pi u}} \log(u) du+\\
&+K^{3/2}\int_0^\infty \frac{h(\sqrt{u})u^{1/4}}{\sqrt{2\pi u}} du \cdot \Big(\frac{\sqrt{2}\pi}{16}\Big(\frac{3}{2} \gamma - \log (4\pi)\Big) \tilde{\psi_1}(0)\tilde{\psi_2}(0) \Big)+\\
&+K^{3/2} \int_0^\infty \frac{h(\sqrt{u})u^{1/4}}{\sqrt{2\pi u}} du \cdot \frac{\sqrt{2}\pi}{8} \frac{1}{2\pi i} \int_{(1)} \tilde{\psi}_1(-s_2)\tilde{\psi_2}(s_2)\zeta(1-s_2)\zeta(1+s_2)ds_2+ \\
&+O_{\psi_1, \psi_2}(K^{5/4+\varepsilon})
\end{align*}
as $K \to \infty$. 
\end{thm}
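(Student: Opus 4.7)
The plan is to open each $\mu_f(\psi_i)$ via the Fourier expansion of the arithmetically normalized form $f_{\mathrm{arith}}(z)=\sum_{n\geq 1}\lambda_f(n)n^{(k-1)/2}e^{2\pi inz}$, combined with a Mellin inversion of $\psi_i$. Writing $|f(iy)|^2 = \|f_{\mathrm{arith}}\|^{-2}|f_{\mathrm{arith}}(iy)|^2$ and integrating term-by-term gives
\[
\mu_f(\psi) = \frac{1}{\|f_{\mathrm{arith}}\|^2}\cdot \frac{1}{2\pi i}\int_{(c)}\tilde{\psi}(s)\,\frac{\Gamma(k+s)}{(2\pi)^{k+s}}\sum_{m,n\geq 1}\frac{\lambda_f(m)\lambda_f(n)(mn)^{(k-1)/2}}{(m+n)^{k+s}}\,ds.
\]
Since $\|f_{\mathrm{arith}}\|^{-2}L(1,\sym^2 f)$ depends only on $k$, multiplying by the weight $L(1,\sym^2 f)$ converts the $f$-average into a harmonic-weighted Petersson average.

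Next I would expand $(\mu_f(\psi_1)-\mathbb{E}(\psi_1))(\mu_f(\psi_2)-\mathbb{E}(\psi_2))$ into its four pieces and apply Petersson's trace formula. Using Hecke multiplicativity to reduce the product $\lambda_f(m_1)\lambda_f(n_1)\lambda_f(m_2)\lambda_f(n_2)$ to a single pair $\lambda_f(M)\lambda_f(N)$ with $M=m_1m_2/d^2$, $N=n_1n_2/e^2$, Petersson yields a delta term $\delta_{M,N}$ plus Kloosterman terms weighted by $J_{k-1}(4\pi\sqrt{MN}/c)/c$. The subtracted cross terms $\mathbb{E}(\psi_i)\mu_f(\psi_j)$ and the constant $\mathbb{E}(\psi_1)\mathbb{E}(\psi_2)$ piece are designed to cancel the $N=1$ diagonal contribution (where $\lambda_f(1)=1$ appears in every summand), thereby isolating the genuine fluctuation.

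From the diagonal $M=N$, I would recast the remaining multiple sum as a double Mellin-Barnes integral whose integrand carries factors $\zeta(1\pm s_i)$ and a gamma-quotient in $k$. Stirling's asymptotic for this gamma factor combined with the sum over even $k$ against $h((k-1)/K)$ produces the characteristic weight $\int h(\sqrt u)u^{1/4}/\sqrt{2\pi u}\,du$ and the $K^{3/2}$ scaling. The contour at $\Re(s_2)=1$ contributes the last line of $V(\psi_1,\psi_2)$ directly, while shifting one variable leftward past the double pole of $\zeta(1+s)\zeta(1-s)$ at $s=0$ extracts the residual main terms: the $\log K$ piece, the $\log u$ piece (from the Laurent coefficient of $u^s$ at $s=0$), and the $\tfrac{3}{2}\gamma - \log(4\pi)$ constants from derivatives of gamma factors at $s=0$.

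The hardest step will be the off-diagonal treatment. After $k$-averaging $\sum_{k\text{ even}}h((k-1)/K)i^{-k}J_{k-1}(x)$ against the Kloosterman sums, the Bessel transform localizes in $x\asymp K$ and produces an averaged shifted-convolution sum indexed by $M-N$. As the abstract foreshadows, one particular contribution from this analysis matches exactly a piece of the diagonal calculation, so careful bookkeeping is required to extract the correct constants without double-counting. The remaining genuine off-diagonal terms, together with secondary error contributions, must then be shown to be $O(K^{5/4+\varepsilon})$; this will rely on a stationary-phase analysis of the Bessel transform combined with the Weil bound and cancellation in the $c$-sum after smoothing.
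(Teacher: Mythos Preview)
Your outline captures the broad architecture, but it differs structurally from the paper and your off-diagonal sketch misses the two mechanisms that actually drive the result.

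\emph{Decomposition.} The paper does not expand the product into four pieces and apply Petersson to four Hecke eigenvalues. Instead it first writes, for each factor separately, $\mu_f(\psi)-\mathbb{E}(\psi)=S_\psi+\mathcal{E}_\psi$, where $S_\psi$ is the $m\neq n$ part of the Fourier expansion (a shifted convolution $\sum_{\ell\neq 0}\sum_n\lambda_f(n)\lambda_f(n+\ell)\cdot(\text{weight})$) and $\mathcal{E}_\psi=O(k^{-1/2+\varepsilon})$ is the remainder after the Rankin--Selberg pole is subtracted. After the Hecke relation $\lambda_f(n)\lambda_f(n+\ell)=\sum_{d\mid(n,\ell)}\lambda_f(n(n+\ell)/d^2)$, Petersson is applied to a product of \emph{two} eigenvalues, with diagonal condition $n_1(n_1+m_1)=n_2(n_2+m_2)$. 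Your claim that the cross terms cancel an ``$N=1$ diagonal'' is not how the cancellation works: $\mathbb{E}(\psi)$ is removed from each factor \emph{before} multiplying, not recovered afterwards from the Petersson delta.

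\emph{Off-diagonal.} This is the heart of the proof and your sketch does not reach it. After the averaged Petersson formula the Kloosterman term is localized to $c\ll K^{\varepsilon}$, so there is no ``cancellation in the $c$-sum'' to exploit; and the Weil bound alone gives only $\mathcal{OD}=O(K^{2+\varepsilon})$, which is trivial here. The two ingredients the paper actually uses, neither of which you mention, are: (i) after splitting $n_i,m_i$ into residue classes modulo $c$ and applying Poisson in $n_1$, a stationary-phase analysis of the resulting oscillatory integral in the $n_1$-variable --- the phase $e_c(2\sqrt{n_1(n_1+m_1)n_2(n_2+m_2)})$ genuinely oscillates because the shifts $m_i$ reach $K^{1/2+\varepsilon}$, and this supplies the extra $K^{1/2}$ saving; and (ii) the exact identity
\[
\sum_{a_1,a_2,b_1,b_2 \bmod c} S\bigl(a_1(a_1+b_1),a_2(a_2+b_2);c\bigr)\,e_c(2a_1a_2+a_1b_2+a_2b_1)=c^3\phi(c),
\]
whose proof hinges on the free sums over $b_1,b_2$ coming from the long shifts (with fixed shifts one gets only Sali\'e-type expressions, as in Luo--Sarnak). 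After (i) and (ii) the off-diagonal reduces to a smooth multiple sum evaluated by Mellin inversion and contour shifts; as you correctly anticipated, its main term matches a piece of the diagonal. The final error $O(K^{5/4+\varepsilon})$ arises from the stationary-phase remainders and from the trivial treatment of the region where both shifts are $\ll K^{1/8}$, not from Weil or from any long $c$-sum.
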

In the computation of quantum variances one usually chooses a test function $\psi$ that is on average $0$, i.e. $\mathbb{E}(\psi)=0$. As a corollary of Theorem \ref{mainthm} we then get
\begin{cor}\label{maincor} Let $\psi_1,\psi_2$ and $h$ be  smooth compactly-supported functions on $\mathbb{R}^+$. Moreover, suppose that $\psi_i(y)=\psi_i(1/y)$ and $\mathbb{E}(\psi_i)=0$ for $i=1,2$, then 
\begin{equation}\label{thmequ2}
\sum_{k\equiv 0\mod{2}} h\Big(\frac{k-1}{K}\Big) \sum_{f\in H_k} L(1, \sym^2 f)   \mu_f(\psi_1) \mu_f(\psi_2) = V(\psi_1, \psi_2)
\end{equation}
with
\begin{align}\label{slickvar}
V(\psi_1, \psi_2)=&K^{3/2} \int_0^\infty \frac{h(\sqrt{u})u^{1/4}}{\sqrt{2\pi u}} du \cdot \frac{\sqrt{2}\pi}{8} \frac{1}{2\pi i} \int_{(1)} \tilde{\psi_1}(-s_2)\tilde{\psi_2}(s_2)\zeta(1-s_2)\zeta(1+s_2)ds_2 \\&+ O_{\psi_1, \psi_2}(K^{5/4+\varepsilon}). \nonumber
\end{align}
as $K\to \infty$.
\end{cor}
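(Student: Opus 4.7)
The plan is to obtain Corollary \ref{maincor} directly from Theorem \ref{mainthm} by simplifying both sides under the additional hypothesis $\mathbb{E}(\psi_i)=0$ for $i=1,2$. On the left-hand side of \eqref{thmequ1}, each factor $\mu_f(\psi_i)-\mathbb{E}(\psi_i)$ collapses to $\mu_f(\psi_i)$, so the outer sum over weights and Hecke bases matches the left-hand side of \eqref{thmequ2} verbatim.

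For the right-hand side, I would first relate $\mathbb{E}(\psi)$ to $\tilde{\psi}(0)$. The substitution $y\mapsto 1/y$ in the definition of $\tilde{\psi}$ gives
$$\tilde{\psi}(0) = \int_0^\infty \psi(y^{-1})\,\frac{dy}{y} = \int_0^\infty \psi(y)\,\frac{dy}{y},$$
so $\mathbb{E}(\psi_i) = \frac{3}{\pi}\tilde{\psi}_i(0)$, and the hypothesis $\mathbb{E}(\psi_i)=0$ is equivalent to $\tilde{\psi}_i(0)=0$. Inspecting the formula for $V(\psi_1,\psi_2)$ in Theorem \ref{mainthm}, the first three summands each carry the factor $\tilde{\psi}_1(0)\tilde{\psi}_2(0)$ and therefore vanish under this hypothesis. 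Only the fourth summand, involving the contour integral of $\tilde{\psi}_1(-s_2)\tilde{\psi}_2(s_2)\zeta(1-s_2)\zeta(1+s_2)$, survives, yielding precisely the expression \eqref{slickvar} with the same error term $O_{\psi_1,\psi_2}(K^{5/4+\varepsilon})$.

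There is no genuine obstacle in this deduction: the substantive content lies entirely in Theorem \ref{mainthm}, and Corollary \ref{maincor} is merely its specialization to test functions of mean zero. The only point that requires a moment of care is verifying the elementary identity $\mathbb{E}(\psi)=\tfrac{3}{\pi}\tilde{\psi}(0)$, which is what links the vanishing condition on $\mathbb{E}(\psi_i)$ to the annihilation of the three auxiliary main terms.
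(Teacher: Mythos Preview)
Your proposal is correct and matches the paper's own reasoning: the paper does not give a separate proof of Corollary~\ref{maincor} but presents it as an immediate specialization of Theorem~\ref{mainthm}, exactly along the lines you describe. Your verification that $\mathbb{E}(\psi)=\tfrac{3}{\pi}\tilde{\psi}(0)$, and hence that the first three main terms vanish when $\mathbb{E}(\psi_i)=0$, is precisely the (implicit) deduction the paper intends.
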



As mentioned before, a direct consequence of Theorem \ref{mainthm} is the following equidistribution result:
\begin{cor}[Quantum Ergodicity]
Let $f$ be a holomorphic Hecke  cusp form of even weight $k$ and $\psi$ a real-valued, smooth, compactly-supported on $\mathbb{R}^+$. For a fixed $\epsilon>0$ we have
$$\#\big\{f \in \bigcup_{\substack{K <  k \leq 2K\\ k\equiv 0 \mod{2}}} H_k: |\mu_f(\psi)-\mathbb{E}(\psi)| > K^{-1/4+\epsilon}\big\} =o(K^2),$$
i.e. almost all eigenforms $f \in  \bigcup_{\substack{K <  k \leq 2K\\ k\equiv 0 \mod{2}}} H_k$ are equidistributed.
\end{cor}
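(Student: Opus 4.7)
The plan is to deduce this Quantum Ergodicity corollary from Theorem \ref{mainthm} by a standard Chebyshev argument. I would specialize the theorem to $\psi_1 = \psi_2 = \psi$ and pick a smooth, nonnegative, compactly-supported bump $h\colon \mathbb{R}^+ \to \mathbb{R}$ with $h(x) \geq 1$ on $[1,2]$. Every summand on the left-hand side of \eqref{thmequ1} is then nonnegative, because $L(1,\sym^2 f) > 0$ by the classical nonvanishing at the edge $s=1$, while $(\mu_f(\psi)-\mathbb{E}(\psi))^2$ is a square. Since $h((k-1)/K) \geq 1$ for $K < k \leq 2K$, Theorem \ref{mainthm} produces
\[
\sum_{\substack{K < k \leq 2K\\ k \text{ even}}} \sum_{f \in H_k} L(1, \sym^2 f)\, \bigl|\mu_f(\psi) - \mathbb{E}(\psi)\bigr|^2 \ll_{\psi} K^{3/2} \log K,
\]
because every term in the main part of $V(\psi,\psi)$ is of size $O_\psi(K^{3/2}\log K)$ and the error term is smaller.

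Next I would invoke the Hoffstein--Lockhart subpolynomial lower bound $L(1, \sym^2 f) \gg_\eta k^{-\eta}$, valid for every $\eta > 0$. Combining it with the estimate above yields
\[
\sum_{\substack{K < k \leq 2K\\ k \text{ even}}} \sum_{f \in H_k} \bigl|\mu_f(\psi) - \mathbb{E}(\psi)\bigr|^2 \ll_{\psi, \eta} K^{3/2 + \eta} \log K.
\]
Let $S$ denote the exceptional set in the statement of the corollary. Each $f \in S$ contributes at least $K^{-1/2 + 2\epsilon}$ to the sum on the left, so Chebyshev gives $|S| \ll_{\psi,\eta} K^{2 - 2\epsilon + \eta} \log K$. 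Choosing $\eta < \epsilon$ produces $|S| = o(K^2)$, which is exactly the claim.

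The main (and essentially only) potential obstacle is the lower bound on $L(1, \sym^2 f)$, which is handled by the classical Hoffstein--Lockhart estimate. Beyond that the deduction is formal: Theorem \ref{mainthm} already packages the weighted second moment of $\mu_f(\psi)-\mathbb{E}(\psi)$ in a shape to which Chebyshev applies directly, and the threshold exponent $-1/4+\epsilon$ in the corollary matches the square root of the per-form average size $K^{3/2}/K^2 = K^{-1/2}$ suggested by the theorem.
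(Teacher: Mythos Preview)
Your argument is correct and is exactly the intended deduction; the paper itself does not spell out a proof but merely states that the corollary is ``a direct consequence'' of Theorem~\ref{mainthm}, and your Chebyshev argument combined with the Hoffstein--Lockhart lower bound is the standard way to unpack this. One small point you should add: Theorem~\ref{mainthm} is stated only for test functions with $\psi(y)=\psi(1/y)$, whereas the corollary allows arbitrary real-valued $\psi$. This is harmless because $|f(iy)|^2 y^k$ is invariant under $y\mapsto 1/y$ (from the modular transformation $f(-1/z)=z^k f(z)$), so the odd part of $\psi$ contributes nothing to either $\mu_f(\psi)$ or $\mathbb{E}(\psi)$ and one may replace $\psi$ by its even part from the outset.
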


In subsequent work we will also address an application of Theorem \ref{mainthm} to count real zeros (for almost all) holomorphic Hecke cusp forms low in the fundamental domain (say with $y$ in the interval $[1,2]$). Gosh and Sarnak \cite{ghoshRealZerosHolomorphic2011c} considered  this problem for all Hecke cusp forms high in the fundamental domain (close to the cusp).
\section{Acknowledgements}
The author would like to thank Maksym Radziwi\l\l~and Dimitris Koukoulopoulos for their continuous support during this project and many helpful conversations. Moreover, the author would like to thank Jake Chinis, Peter Humphries, Zeev Rudnick and Steve Zelditch for their interest and comments as well as Junehyuk Jung for suggesting further relevant references. 

\section{Comparing the full fundamental domain with the vertical geodesic}
We now highlight some of the differences between the variance computation for the full fundamental domain (denoted by $X$) and the vertical geodesic. To do so we revisit the work of Luo and Sarnak, who compute the quantum variance for the full fundamental domain in \cite{luoQuantumVarianceHecke2004a}. Set
$$\mu_f^{X}(\psi):=\int_{X} \psi(z) |f(z)|^2 y^{k} \frac{dxdy}{y^2} \quad \mathbb{E}^{X}(\psi):=\int_{X}\psi(z) \frac{dxdy}{y^2},$$
 $\psi \colon X \to \mathbb{R}$ is a smooth, compactly supported function and $z=x+iy$. We further restrict our attention to test functions $\psi$ that have mean value $0$, i.e. $\mathbb{E}^{X}(\psi)=0$. As in \cite{luoQuantumVarianceHecke2004a} we denote by
$$V^{X}(\psi)= \sum_{K < k \leq 2K} \sum_{f\in H_k} \big| \mu_f^{X}(\psi)\big|^2$$
the quantum variance for the full fundamental domain. Computing the variance $V^{X}(\psi)$ reduces to computing an averaged shifted convolution problem of the form
$$\mathcal{M}^X(h_{\psi}) = \sum_{K < k \leq 2K} \sum_{f\in H_k} \Big| \frac{1}{k} \sum_{n} \lambda_f(n)\lambda_f(n+\ell)h_{\psi}\Big(\frac{k}{n}\Big)\Big|^2,$$
in the sense that an asymptotic (or bound) of $\mathcal{M}^X(h_{\psi})$ will lead to an asymptotic (or bound) of $V^{X}(\psi)$ (see \cite[Theorem 2 and Section 4]{luoQuantumVarianceHecke2004a}). In the definition of $\mathcal{M}^X(h_{\psi})$ the shift $\ell$ is a fixed non-zero integer, $\lambda_f(n)$ denotes the $n$-th Hecke eigenvalue of $f$, and $h_{\psi}$ is a smooth, compactly supported test function (that arises after decomposing $\psi$ into Poincar\'{e} series) which forces $n$ to be of size $k$.
We expect $\mathcal{M}^X(h_{\psi}) =O(K^{1+\epsilon})$, assuming square root cancellation in the shifted convolution problem. Indeed, Luo and Sarnak \cite[Theorem 1, Theorem 2]{luoQuantumVarianceHecke2004a} compute the asymptotic for $\mathcal{M}^X(h_{\psi})$, and show that $\mathcal{M}^X(h_{\psi})$  and $V^{X}(\psi)$ are of size $K$. More precisely, they obtain the asymptotic 

$$\mathcal{M}^X(h_{\psi})\sim B(h_{\psi}) K $$
with
\begin{align*} B(h_{\psi})=&\frac{\pi}{4} \sum_{d_1|\ell_1, d_2|\ell_2; |\ell_1|/d_1=|\ell_2|/d_2=|\ell_2|/d_2} \frac{1}{d_1d_2}\int_0^\infty h_{\psi}(d_2\eta)\overline{h_{\psi}}(d_1\eta)\frac{d\eta}{\eta^2} \\
&-\frac{\pi}{2\sqrt{2}} \sum_{d_1|\ell_1, d_2|\ell_2} \frac{1}{d_1d_2} \sum_{c\geq 1} \frac{S_{c, |\ell_1|/d_1, \ell_2/d_2}}{c^{5/2}} e\Big(\frac{1}{2c} \frac{|\ell_1|}{d_1}\frac{|\ell_2|}{d_2}\Big) \\
&\times \int_{-\infty}^\infty \int_{-\infty}^\infty \sin\Big(-\frac{\pi}{4}-\frac{\pi}{2c}\Big(\frac{\ell_1}{d_1}\Big)^2 \frac{\xi}{\eta}-\frac{\pi}{2c}\Big(\frac{\ell_2}{d_2}\Big)^2 \frac{\eta}{\xi} + 2\pi(d_1d_2)^2\xi \eta c\Big) \\
&\times \frac{h_{\psi}(d_2\xi)}{\sqrt{\xi}} \frac{h_{\psi}(d_1\eta)}{\sqrt{\eta}} \frac{d\xi d\eta}{\xi \eta},
\end{align*}
where 
\begin{equation}\label{Salie}S_{c, \ell_1/d_1, \ell_2/d_2}:= \sum_{a, b \mod{c}}S(a(a+\ell_1/d_1), b(b+\ell_2/d_2);c)e_c\Big(2ab +\Big(\frac{\ell_2}{d_2}\Big)a+\Big(\frac{\ell_1}{d_1}\Big) b\Big)
\end{equation} and $S(m, m; c)$ denotes the classical Kloosterman sum and $e_c(x)=e^{2\pi i x/c}$.

The quantum variance for the vertical geodesic (with $\mathbb{E}(\psi)=0$) is given by
$$V(\psi)=\sum_{K< k\leq 2K}\sum_{f\in H_k}\big | \mu_f(\psi)\big|^2.$$
After a Fourier expansion of the Hecke cusp form $f$ and opening the square we are facing a different shifted convolution problem:

$$\mathcal{M}(\psi) = \sum_{K < k \leq 2K} \sum_{f \in H_k} \Big| \frac{1}{k} \sum_{0  < |\ell|\leq k^{1/2+\epsilon}} \sum_{n} \lambda_f(n)\lambda_f(n+\ell)\psi\Big(\frac{n}{k}\Big)\Big|^2.$$
We notice an additional summation over the shifts $\ell$ up to $\sqrt{k}$, compared to the full fundamental domain where shifts are fixed. Again, assuming square root cancellation in the summation over $n$ and $\ell$  leads us to believe that $\mathcal{M}(\psi)=O(K^{3/2})$.

Although, the additional summation over $\ell$ makes the problem of computing the variance more difficult (as we need cancellation over the summation of $n$ and $\ell$), it also gives rise to a surprising feature of the vertical geodesic, namely the aesthetically pleasing formula
\begin{align}\label{MainShifted}
\mathcal{M}(\psi)\sim&B(\psi) K^{3/2}
\end{align}
with $$B(\psi)=\frac{\sqrt{2}\pi}{8} \frac{1}{2\pi i} \int_{(1)} \tilde{\psi}(-s_2)\tilde{\psi}(s_2)\zeta(1-s_2)\zeta(1+s_2)ds_2.$$
In the following section we quickly sketch the proof of the asymptotic formula \eqref{MainShifted} (and hence Theorem \ref{mainthm}).  

\section{High Level Sketch}
As mentioned before, to compute the quantum variance it suffices to consider the shifted convolution problem
\begin{equation}\label{heuristic}
\mathcal{M}(\psi)= \sum_{K < k \leq 2K} \sum_{f\in H_k} \Big|\frac{1}{k}\sum_{0<|\ell| \leq k^{1/2+\varepsilon}}\sum_{n} \lambda_f(n)\lambda_f(n+\ell)\psi\Big(\frac{k}{n}\Big)\Big|^2.
\end{equation}
Our goal is to show that $\mathcal{M}(\psi)$ is of size $K^{3/2}$. If we can show that $\mathcal{M}(\psi)=o(K^2)$ then we could deduce an equidistribution result, i.e. that $\mu_f(\psi)=\mathbb{E}(\psi)+o(1)$ for almost all Hecke cusp forms in $\bigcup_{\substack{K <  k \leq 2K\\ k\equiv 0 \mod{2}}} H_k$. Observe that the trivial bound for $\mathcal{M}(\psi)$ is $K^{3+\epsilon}$, since the Fourier coefficients $\lambda_f(n)$ are divisor bounded. On the other hand, assuming square root cancellation only in the summation over $n$ yields the bound $\mathcal{M}(\psi)=O(K^{2+\epsilon})$. We therefore need to detect further cancellation over the shifts $\ell$ to achieve our desired equidistribution result.

We now explain how to compute the asymptotic formula for $\mathcal{M}(\psi)$. First, we open the square in \eqref{heuristic}  and apply the Petersson trace formula. We obtain a diagonal term that is easy to evaluate and a more complicated off-diagonal expression involving Kloosterman sums. Let $\mathcal{OD}$ denote this off-diagonal term that is roughly given by
\begin{equation}\label{offdiagonalheuristic}\frac{1}{K}\sum_{0<|\ell_1|, |\ell_2|\leq K^{1/2+\varepsilon}} \sum_{K < n_1, n_2 \leq 2K} \sum_{c\ll K^\varepsilon} \frac{S(n_1(n_1+\ell_1), n_2(n_2+\ell_2);c)}{\sqrt{c}} e_c(2\sqrt{n_1(n_1+\ell_1)n_2(n_2+\ell_2)}),
\end{equation}
where $S(n,m;c)$ represents the classical Kloosterman sum and $e_c(n)=e^{2\pi i n/c}$. At this point, a trivial bound (using the Weil bound for Kloosterman sums) shows that $\mathcal{OD} = O(K^{2+\epsilon})$. In particular, we need to exploit a little bit more cancellation over the shifts to break the barrier for equidistribution.

As the summation over the variable $c$ is short, it is instructive to consider the special case when $c=1$ so that 
$$\mathcal{OD}\approx \frac{1}{K}\sum_{0<|\ell_1|, |\ell_2|\leq K^{1/2+\varepsilon}} \sum_{K < n_1, n_2 \leq 2K} e_1(2\sqrt{n_1(n_1+\ell_1)n_2(n_2+\ell_2)}).$$
The difference to the full fundamental domain, as noted before, is given by the size of the shifts. When the shifts are fixed, the exponential $e_c(2\sqrt{n_1(n_1+\ell_1)n_2(n_2+\ell_2)})$ is essentially smooth. For the vertical geodesic this is not the case and we expect cancellation when summing over the various variables $n_i, \ell_i$ for $i=1,2$. We will sum at first over the variable $n_1$ and detect square root cancellation in 
$$\sum_{n_1 \sim K}e_c(2\sqrt{n_1(n_1+\ell_1)n_2(n_2+\ell_2)})$$
with a stationary phase argument. The new phase in the exponential collapses to zero and so there is no further cancellation to be found. This will show that $\mathcal{OD}=O(K^{3/2+\epsilon})$, as desired. To compute the exact asymptotic of $\mathcal{OD}$ we use the long shifts $\ell_1, \ell_2$ to our advantage. We split the various variables into residue classes modulo $c$, namely $n_i\equiv a_i \mod{c}$ and $\ell_i\equiv b_i\mod{c}$ for $i=1,2$. After the stationary phase argument from before we need to evaluate
\begin{equation}\label{klooster}
S:=\sum_{\substack{a_1\mod{c}\\a_2\mod{c}}} \sum_{\substack{b_1 \mod{c}\\b_2 \mod{c}}} S_c(a_1(a_1+b_1), a_2(a_2+b_2); c)e_c(2a_1a_2+a_1b_2+a_2b_1).
\end{equation}
 The additional summations over the parameters $b_1, b_2$ arising from the long shifts $\ell_1, \ell_2$ are crucial to show that $S=c^3\phi(c)$. Here $\phi(c)$ denotes as usual Euler's totient function. Let us compare quantity $S$ with expression \eqref{Salie} for the full fundamental domain. Expression \eqref{Salie} does not simplify significantly, but is related to Sali\'{e} sums instead. With the formula $S=c^3\phi(c)$ in hand and some complex contour integrals we can evaluate the off-diagonal $\mathcal{OD}$ asymptotically. The final  computations remind us of computing moments of $L$-functions, in particular, the paper \cite{khanNonvanishingSymmetricSquare2010} of Khan and the computation of the second moment of the symmetric square $L$-function therein. We also observe that the off-diagonal term agrees with a certain diagonal term, which enables us to write down the rather clean formula \eqref{slickvar} for the quantum variance.
\section{Proof of Theorem \ref{mainthm}}
\subsection{Setup}

Recall from the introduction that $\psi$, $h$ are smooth compactly-supported functions on $\mathbb{R^+}$ and $H_k$ denotes a basis of Hecke cusp forms of weight $k$. We want to compute the asymptotic of
\begin{equation*}\label{startingpoint}V(\psi_1, \psi_2)=\sum_{k\equiv 0\mod{2}} h\Big(\frac{k-1}{K}\Big) \sum_{f\in H_k} L(1, \sym^2 f)  \Big( \mu_f(\psi_1) - \mathbb{E}(\psi_1)\Big)\cdot \Big(\mu_f(\psi_2) - \mathbb{E}(\psi_2)\Big)
\end{equation*}
with
$$\mu_f(\psi)=\int_0^\infty |f(iy)|^2 y^{k/2} \psi(y) dy \quad \text{ and } \quad \mathbb{E}(\psi)=\frac{3}{\pi} \int_0^\infty \psi(y)\frac{dy}{y}.$$ Here the symmetric square L-function is given by
$$L(s, \sym^2f)=\zeta(2s)\sum_{n=1}^\infty \frac{\lambda_f(n^2)}{n^s},$$
for $\Re(s)>1$. The Fourier expansion of a normalized Hecke cusp form is given by
$$f(z)=a_f(1)\sum_{n=1}^\infty \lambda_f(n) (4\pi n)^{(k-1)/2} e(nz),$$
with $|a_f(1)|^2=\frac{2\pi^2}{\Gamma(k)L(1,\sym^2f)}$ arising from the normalization $||f||_2^2=1$. 
Similar to Luo and Sarnak in \cite[p. 877]{luoandsarnakMassEquidistributionHecke} we define the function $\tilde{\psi}(s)$ by
$$\tilde{\psi}(s):=\int_0^\infty \psi(y^{-1})y^{s-1} dy.$$ 
Then $\tilde{\psi}$ is entire, and for any integer $j>0$ and any vertical strip $a \leq \Re(s) \leq b$, it satisfies
$\tilde{\psi}\ll_{a, b, j} (|s|+1)^{-j}$. Mellin inversion yields
$$\psi(y)=\frac{1}{2\pi i} \int_{(\sigma)} \tilde{\psi}(s)y^s ds, \quad \text{for }\sigma >0, y >0.$$
\subsection{Reduction to a Shifted Convolution Problem}
To evaluate the variance $V(\psi_1, \psi_2)$ we first use the Fourier expansion of $f(iy)$ and write
$$|f(iy)|^2=\big|a_f(1)\sum_{n} \lambda_f(n)(4\pi n)^{(k-1)/2}e^{-2\pi n y} \big|^2.$$ 
We then expand the square, seperating the terms with $m=n$ from those with $m\neq n$. The terms with $m=n$ agree up to a small error term (which we will call $\mathcal{E}_\psi$ below) with the expected main term $\mathbb{E}(\psi)$. The terms with $m\neq n$ lead to a shifted convolution problem and this quantity will be denoted by $S_\psi$.
\begin{lem}We have
\begin{align}\mathcal{E}_\psi:=&\int_0^\infty \psi(y)y^k |a_f(1)|^2 \sum_{n=1}^\infty \lambda_f(n)^2 (4\pi n)^{k-1} e^{-4\pi ny} \frac{dy}{y} -\frac{3}{\pi}\int_0^\infty \psi(y)\frac{dy}{y}\label{errorterm}\\ 
=&\frac{2\pi^2}{L(1,\sym^2f)} \cdot{\frac{1}{2\pi i}} \int_{(1/2)}\tilde{\psi}(s-1) \frac{\zeta(s) L(s, \sym^2f)}{(4\pi)^{s} \zeta(2s)} \frac{\Gamma(k+s-1)}{\Gamma(k)} ds. \nonumber
\end{align}
\end{lem}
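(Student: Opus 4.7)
The plan is to apply Mellin inversion to $\psi$, interchange integration and summation to bring out a Rankin-Selberg-type Dirichlet series in $\lambda_f(n)^2$, and then shift contours so that the single residue encountered cancels the subtracted main term $\mathbb{E}(\psi)$.

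First, I would insert $\psi(y) = \frac{1}{2\pi i} \int_{(1/2)} \tilde{\psi}(s)\,y^s\,ds$ into the first integral of \eqref{errorterm}, and swap the orders of the $y$-integral, the $s$-integral, and the $n$-sum. This swap is legitimate because $\tilde{\psi}$ decays faster than any polynomial on vertical lines, while $\sum_n \lambda_f(n)^2 n^{-\sigma-1}$ converges absolutely for $\sigma > 0$. The inner $y$-integral then evaluates to a gamma factor,
\[ \int_0^\infty y^{k+s-1} e^{-4\pi n y}\,dy = \frac{\Gamma(k+s)}{(4\pi n)^{k+s}}, \]
so after factoring out $(4\pi n)^{-s-1}$ the remaining $n$-sum collapses to the Rankin-Selberg series
\[ \sum_{n=1}^\infty \frac{\lambda_f(n)^2}{n^{s+1}} = \frac{\zeta(s+1)\, L(s+1, \sym^2 f)}{\zeta(2s+2)}. \]

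Next, I would substitute the normalization $|a_f(1)|^2 = 2\pi^2/[\Gamma(k)\, L(1, \sym^2 f)]$ and relabel $s \mapsto s-1$, bringing the first integral in \eqref{errorterm} into the form
\[ \frac{2\pi^2}{L(1, \sym^2 f)}\cdot \frac{1}{2\pi i}\int_{(3/2)} \tilde{\psi}(s-1)\,\frac{\zeta(s)\,L(s, \sym^2 f)}{(4\pi)^s\,\zeta(2s)}\,\frac{\Gamma(k+s-1)}{\Gamma(k)}\,ds. \]
I would then move the contour from $\Re(s) = 3/2$ down to $\Re(s) = 1/2$. Since $L(s, \sym^2 f)$ is entire and $\zeta(2s)$ is non-vanishing on this strip, the only singularity crossed is the simple pole of $\zeta(s)$ at $s=1$, whose residue equals
\[ \tilde{\psi}(0)\cdot \frac{L(1, \sym^2 f)}{4\pi\,\zeta(2)} = \frac{3\,\tilde{\psi}(0)\, L(1, \sym^2 f)}{2\pi^3}. \]
Multiplying by the prefactor yields $3\,\tilde{\psi}(0)/\pi$, and since $\tilde{\psi}(0) = \int_0^\infty \psi(y)\,dy/y$ (via $y \mapsto y^{-1}$), this residue is exactly $\mathbb{E}(\psi)$ and cancels the subtracted term in the definition of $\mathcal{E}_\psi$. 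What remains is precisely the claimed integral on $(1/2)$.

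I do not anticipate any real obstacle: the argument is a standard Mellin/Rankin-Selberg unfolding followed by a single-residue contour shift, and the cancellation of $\mathbb{E}(\psi)$ by the pole at $s=1$ is structurally forced by the presence of the zeta factor. The only technicalities are the Fubini-type interchange at the start (fine by rapid decay of $\tilde{\psi}$) and convergence of the final integral on $\Re(s)=1/2$, which follows from the rapid decay of $\tilde{\psi}$, polynomial bounds for $\zeta$ and $L(\cdot,\sym^2 f)$ on vertical lines, and Stirling's formula applied to $\Gamma(k+s-1)/\Gamma(k)$.
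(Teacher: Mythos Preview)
Your proposal is correct and follows essentially the same approach as the paper: Mellin inversion on $\psi$, evaluation of the $y$-integral as a Gamma function, recognition of the Rankin--Selberg factorization $\sum_n \lambda_f(n)^2 n^{-s-1}=\zeta(s+1)L(s+1,\sym^2 f)/\zeta(2s+2)$, and a contour shift crossing the single pole of $\zeta$ whose residue produces $\mathbb{E}(\psi)$. The only cosmetic difference is the order of operations---the paper starts on $\Re(s)=2$, shifts to $\Re(s)=-1/2$, and then substitutes $s\mapsto s-1$, whereas you substitute first and then shift from $\Re(s)=3/2$ to $\Re(s)=1/2$.
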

\begin{proof}
\begin{align*}&\int_0^\infty \psi(y)y^k |a_f(1)|^2 \sum_{n=1}^\infty \lambda_f(n)^2 (4\pi n)^{k-1} e^{-4\pi ny} \frac{dy}{y} \\
=&\frac{2\pi^2}{\Gamma(k)L(1, \sym^2f)} \cdot \frac{1}{2\pi i}\int_{(2)} \tilde{\psi}(s) \sum_{n=1}^\infty \lambda_f(n)^2 (4\pi n)^{k-1} \int_0^\infty y^{k+s} e^{-4\pi ny}\frac{dy}{y}\\
=&\frac{2\pi^2}{L(1,\sym^2f)} \cdot{\frac{1}{2\pi i}} \int_{(2)}\tilde{\psi}(s) \frac{\zeta(1+s)L(1+s, \sym^2f)}{(4\pi)^{1+s}\zeta(2(1+s))} \frac{\Gamma(k+s)}{\Gamma(k)} ds
\end{align*}
We used that we can write the Rankin-Selberg L-function in terms of the symmetric square as $$L(s, f\otimes f) =\frac{\zeta(s)L(s,\sym^2f)}{\zeta(2s)}= \sum_{n=1}^\infty \frac{\lambda_f(n)^2}{n^s}.$$
We then shift the contour from $\Re(s)=2$ to $\Re(s)=-1/2$ and pick up a pole at $s=0$ with residue $$\frac{2\pi}{L(1, \sym^2 f)}\cdot \tilde{\psi}(0) \frac{L(1, \sym^2 f)}{4\pi \zeta(2)} = \frac{3}{\pi} \int_0^\infty \psi(y) \frac{dy}{y}.$$
The lemma follows by making the change of variables $s \to s-1$ for the new line integral.
\end{proof}
The term $\mathcal{E}_{\psi}$ should be seen as an error term that is of size $K^{-1/2}$ (compare for example with \cite[Section 5]{luoandsarnakMassEquidistributionHecke}). The off-diagonal term on the other hand is given by a shifted convolution (which we denote by $S_\psi$) of size $K^{-1/4}$ as the following lemma indicates:
\begin{lem}\label{shiftedconvolutionint} We have
\begin{align}S_{\psi}:=& \int_0^\infty \psi(y) y^k |a_f(1)|^2 \sum_{n\neq m} \lambda_f(n)\lambda_f(m) (16\pi^2 nm)^{(k-1)/2}e^{-2\pi (n+m)y} \frac{dy}{y}\label{shiftedconvolutiondefi} \\
=&\frac{\pi}{2 L(1, \sym^2f)}\sum_{\ell\neq 0} \sum_n \frac{\lambda_f(n)\lambda_f(n+\ell)}{\sqrt{n(n+\ell)}} \exp\Big( -\frac{k\ell^2}{2(2n+\ell)^2}\Big) \psi\Big(\frac{k}{2\pi(2n+\ell)}\Big) +O_\psi(k^{-1/2+\varepsilon}).\nonumber
\end{align}
\end{lem}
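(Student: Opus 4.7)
The plan is to evaluate each $y$-integral by Laplace's method, exploiting that $y^k e^{-2\pi(n+m)y}$ is sharply peaked at $y = k/(2\pi(n+m))$ with width $\sim \sqrt{k}/(n+m)$. First I would substitute $m = n+\ell$ with $\ell \neq 0$ and swap the summation with integration to express
$$S_\psi = |a_f(1)|^2 \sum_{\ell \neq 0}\sum_{n}\lambda_f(n)\lambda_f(n+\ell)\bigl(16\pi^2 n(n+\ell)\bigr)^{(k-1)/2}J(n,\ell),$$
where $J(n,\ell) = \int_0^\infty \psi(y) y^{k-1} e^{-2\pi(2n+\ell)y}\,dy$. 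The substitution $u = 2\pi(2n+\ell)y$ converts $J(n,\ell)$ into a Gamma-type integral and collapses the entire prefactor into $\alpha_{n,\ell}^{k-1}/(2\pi(2n+\ell))$, where $\alpha_{n,\ell} := 2\sqrt{n(n+\ell)}/(2n+\ell)$ satisfies $\alpha_{n,\ell}^2 = 1 - \ell^2/(2n+\ell)^2$.

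Next I would analyze the two asymptotic ingredients separately. Taking logarithms and Taylor-expanding $\log(1-x)$ gives $\alpha_{n,\ell}^{k-1} = \exp\!\bigl(-(k-1)\ell^2/(2(2n+\ell)^2)\bigr)\bigl(1 + O(k\ell^4/(2n+\ell)^4)\bigr)$. For $|\ell| > k^{1/2+\varepsilon}$ the leading exponential is super-polynomially small in $k$, so by the Deligne bound $|\lambda_f(n)| \ll n^\varepsilon$ such shifts contribute negligibly; within $|\ell| \leq k^{1/2+\varepsilon}$ the multiplicative Taylor error is $O(k^{-1+4\varepsilon})$. For the inner integral, Laplace's method — equivalently, Taylor-expanding $\psi(u/(2\pi(2n+\ell)))$ around $u = k$ and invoking the Gamma moments $\int_0^\infty (u-k)^j u^{k-1}e^{-u}\,du \ll_j k^{\lceil j/2\rceil}\Gamma(k)$, noting that the first moment vanishes — gives $\Gamma(k)\,\psi(k/(2\pi(2n+\ell))) + O_\psi(\Gamma(k)/k)$, with the implied constant depending only on $\|\psi''\|_\infty$ and the support of $\psi$. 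The compact support of $\psi$ in turn forces $2n+\ell \asymp k$, effectively restricting the summation to $n \asymp k$ and $|\ell| \ll k^{1/2+\varepsilon}$.

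Combining these expansions with $|a_f(1)|^2 = 2\pi^2/(\Gamma(k)L(1,\sym^2 f))$ and the identity $1/(2n+\ell) = \alpha_{n,\ell}/(2\sqrt{n(n+\ell)})$, with $\alpha_{n,\ell} = 1 + O(k^{-1+2\varepsilon})$ in the main range, produces exactly the stated main term. For the error bound, a rough accounting gives the main term size $\ll k^{1/2+\varepsilon}$: there are $\asymp k^{3/2+\varepsilon}$ effective terms, each of magnitude $\ll 1/k$ after absorbing the prefactor $\Gamma(k)|a_f(1)|^2 \asymp 1/L(1,\sym^2 f) \ll k^{\varepsilon}$ and the divisor-bounded Hecke eigenvalues. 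Multiplying by the per-term relative error $O(k^{-1+4\varepsilon})$ from the Taylor expansions yields total error $O(k^{-1/2+\varepsilon})$, as claimed.

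The main technical obstacle I anticipate is ensuring the Laplace estimate on the inner $u$-integral is uniform in $n$ and $\ell$, particularly where $k/(2\pi(2n+\ell))$ lies near the edge of the support of $\psi$ and for $|\ell|$ close to the cutoff $k^{1/2+\varepsilon}$. Once $\psi(u/(2\pi(2n+\ell)))$ is Taylor-expanded to sufficient order around $u = k$, each Gamma-moment integral is bounded using Stirling, and one verifies that the remainder terms telescope cleanly into the stated error, the rest of the argument is routine bookkeeping.
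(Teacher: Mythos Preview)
Your approach is essentially correct and tracks the paper's proof closely. Both arguments localize the $y$-integral at $y_0 = k/(2\pi(2n+\ell))$ and then Taylor-expand the factor $\bigl(2\sqrt{n(n+\ell)}/(2n+\ell)\bigr)^{k}$ to extract the Gaussian weight $\exp\bigl(-k\ell^2/(2(2n+\ell)^2)\bigr)$. The only real difference in implementation is that the paper carries out the Laplace step via Mellin inversion on $\psi$ together with Stirling's approximation for $\Gamma(k+s)/\Gamma(k)$, whereas you Taylor-expand $\psi\bigl(u/(2\pi(2n+\ell))\bigr)$ about $u=k$ and invoke the Gamma moments directly; the two devices are equivalent.

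The uniformity concern you flag in your final paragraph is genuine and is exactly where your write-up has a small gap. Your asserted remainder $O_\psi(\Gamma(k)/k)$ is not uniform in $(n,\ell)$: the second-order Taylor remainder integrates to $O\bigl(\|\psi''\|_\infty\, k\,\Gamma(k)/(2n+\ell)^2\bigr)$, which is of the claimed size only when $2n+\ell \asymp k$, and your subsequent sentence ``the compact support of $\psi$ in turn forces $2n+\ell \asymp k$'' only constrains the \emph{main term}, not this remainder. The paper deals with this cleanly by first discarding the range $2n+\ell \not\asymp k$ via an elementary argument: for such pairs the peak $y_0$ lies outside $[a/2,2b]$ (where $[a,b]$ is the support of $\psi$), and a direct pointwise estimate on $y^k e^{-2\pi(2n+\ell)y}$ over $\operatorname{supp}\psi$ shows the integral is exponentially small in $k$, hence negligible even after summing with Deligne's bound. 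Once that preliminary truncation is in place, your error accounting (roughly $k^{3/2+\varepsilon}$ effective terms, each carrying relative error $O(k^{-1})$) goes through exactly as you describe.
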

\begin{remark}
Since $\psi$ is smooth compactly-supported on $\mathbb{R}^+$, we see that $(2n+\ell) \asymp k$. The exponential factor limits the size of $\ell$ as otherwise we have rapid decay. It follows that $\ell \ll k^{1/2+\varepsilon}$ and consequently $n \asymp k$. These observations show that the off-diagonal term is related to a shifted convolution problem of the form
$$\frac{1}{k}\sum_{0<|\ell|\ll k^{1/2+\varepsilon}} \sum_{n\sim k} \lambda_f(n)\lambda_f(n+\ell).$$
Assuming square root cancellation in $n$ and the shifts $\ell$ the expected size of $S_\psi$ is $k^{-1/4+\varepsilon}$. The main part of the paper is attributed to showing this statement on average.
\end{remark}
\begin{proof}[Proof of Lemma \ref{shiftedconvolutionint}]
First, we show by elementary means that only the terms satisfying $n+m \asymp k$ contribute to the main term of $S_\psi$.
Since $\psi(y)$ is compactly-supported, there exist real numbers $0<a<b$ such that $\psi(y)$ is supported in $[a,b]$. We write
$L(y)=-2\pi(n+m)y+k \log y$ and set $y_0=k/2\pi(n+m)$ so that $L(y)$ attains its maximum at $y_0$. Suppose that $y_0 \leq a/2$. Then for all $y$ in the support of $\psi$ we have 
$$L'(y) = -2\pi(n+m) +k/y \leq -2\pi(n+m) + k/(2y_0)\leq -\pi (n+m).$$ 
Hence,
\begin{align*}
\int_0^\infty \psi(y) e^{L(y)} \frac{dy}{y} &\leq \int_a^\infty \psi(y) e^{L(y_0)}e^{-(y-y_0) \pi (n+m)} \frac{dy}{y}\\
&\ll_\psi e^{L(y_0)} e^{-(a-y_0) \pi(m+n)}\\
&\ll_\psi e^{L(y_0)} e^{-a \pi(m+n)/2}
\end{align*}
It follows that the contribution from $n, m$ such that $k/(2\pi(n+m))\leq a/2$ to $S_\psi$ is bounded by
\begin{equation}\label{smally}\frac{2\pi^2}{L(1, \sym^2f)} \sum_{\substack{n, m \\ (n+m) \geq k/(a\pi)}} \frac{d(n)d(m)}{\sqrt{nm}} \cdot \Big(\frac{2\sqrt{nm}}{n+m}\Big)^k e^{-a\pi(n+m)/2}.
\end{equation}
Here we used the Deligne bound $|\lambda_f(n)|\leq d(n)$, where $d(n)$ is the divisor function. 
Expression \eqref{smally} decays exponentially in $k$ and is thus negligible.
Similarly, we treat the case when $y_0 \geq 2b$. Note that $L''(y)=-k/y^2 \leq -k/b^2$ for every $y$ in the support of $\psi$. Moreover, $(y-y_0)^2 \geq b^2$. We then have
\begin{align*}\int_0^\infty \psi(y)e^{L(y)} \frac{dy}{y}&\leq \int_0^\infty e^{L(y_0)-(y-y_0)^2 \frac{k}{b^2}} \frac{dy}{y} \\
&\ll_\psi e^{L(y_0)} e^{-k}.
\end{align*}
The contribution from $n, m$ such that $k/(2\pi(n+m))\geq 2b$ to $S_\psi$ is thus bounded by
\begin{equation}\frac{2\pi^2}{L(1, \sym^2f)} \sum_{\substack{n, m \\ (n+m) \leq k/(4\pi b)}} \frac{d(n)d(m)}{\sqrt{nm}} \cdot \Big(\frac{2\sqrt{nm}}{n+m}\Big)^k e^{-k},
\end{equation}
which decays exponentially in $k$.

Subsequently, we restrict our attention to the case $a/2 \leq y_0 \leq 2b$ and in particular, $n+m \asymp k$. 
We start by performing an inverse Mellin transform on $\psi$ and evaluating the integral over $y$ as a Gamma function:
\begin{align*}&\int_0^\infty \psi(y) y^k |a_f(1)|^2 \sum_{n\neq m} \lambda_f(n)\lambda_f(m) (16\pi^2 nm)^{(k-1)/2}e^{-2\pi (n+m)y} \frac{dy}{y} \\
=&\frac{2\pi^2}{\Gamma(k) L(1, \sym^2f)}\sum_{n\neq m} \lambda_f(n)\lambda_f(m) (16\pi^2nm)^{(k-1)/2} \frac{1}{2\pi i}\int_{(2)} \tilde{\psi}(s) \int_0^\infty y^{k+s} e^{-2\pi( n+m)y}\frac{dy}{y}ds \\
=&\frac{2\pi^2}{L(1, \sym^2f)}\sum_{n\neq m} \lambda_f(n)\lambda_f(m)(16\pi^2nm)^{(k-1)/2} \frac{1}{2\pi i}\int_{(2)} \tilde{\psi}(s) \frac{1}{\big(2\pi (n+m)\big)^{k+s}}\frac{\Gamma(k+s)}{\Gamma(k)}ds 
\end{align*}
Similar to \cite[Eq. 2.3]{luoandsarnakMassEquidistributionHecke} Stirlings formula yields that for any vertical strip $0<a \leq \Re(s)\leq b$,
\begin{equation}\label{Gamma}\frac{\Gamma(k+s)}{\Gamma(k)} = k^s \cdot (1+ O_{a,b}((1+|s|)^2 k^{-1})).
\end{equation}
Using \eqref{Gamma} we have
\begin{align}\label{shiftedconvolution}S_\psi=&\frac{\pi}{2L(1, \sym^2f)} \sum_{n\neq m} \frac{\lambda_f(n)\lambda_f(m)}{\sqrt{nm}} \cdot \Big(\frac{2\sqrt{nm}}{n+m}\Big)^k \psi\Big(\frac{k}{2\pi (n+m)}\Big) \\
&+O_\psi\Big(\frac{1}{k \cdot L(1, \sym^2f)} \sum_{n\neq m} \frac{d(n)d(m)}{\sqrt{nm}} \cdot \Big(\frac{2\sqrt{nm}}{n+m}\Big)^k \Big) \nonumber
\end{align}
The factor $\big(\frac{2\sqrt{mn}}{n+m}\big)^k$ is forcing $m$ and $n$ to be close (roughly $|m-n|\ll k^{1/2+\varepsilon}$). More precisely, note that 
\begin{align*}
\frac{m+n}{2\sqrt{mn}}&=\sqrt{\frac{(m+n)^2}{4mn}} \geq \sqrt{1+\frac{(m-n)^2}{4(m+n)^2}}
\end{align*}
In particular,
$$\Big(\frac{2\sqrt{mn}}{n+m}\Big)^k \leq e^{-O(|m-n|^2/k)}$$
and the contribution from $m, n$ with $|m-n|\geq k^{1/2+\varepsilon}$ is exponentially small in $k$. When $|m-n|\ll k^{1/2+\varepsilon}$ we have as in \cite[p. 9]{blomerDistributionMassHolomorphic2013b}
\begin{align*}
\Big(\frac{2\sqrt{mn}}{m+n}\Big)^k &= \Big( 1 - \frac{|m-n|^2}{2(m+n)^2} + O\Big( \frac{|m-n|^4}{(m+n)^4}\Big)\Big)^k\\
&=\exp\Big(k \log \Big( 1 - \frac{|m-n|^2}{2(m+n)^2} + O\Big( \frac{|m-n|^4}{(m+n)^4}\Big)\Big)
\end{align*}
By a Taylor expansion it follows that 
$$S_\psi = \frac{\pi}{2 L(1, \sym^2f)}\sum_{m\neq n} \frac{\lambda_f(m)\lambda_f(n)}{\sqrt{mn}}\exp\Big(-\frac{k|m-n|^2}{2(m+n)^2}\Big)\psi\Big(\frac{k}{2\pi (n+m)}\Big) +O_\psi(k^{-1/2+\varepsilon}).$$
The lemma follows upon writing $m=n+\ell$.
\end{proof}
Subsequently, we only consider the case when $m>n$ and thus $\ell>0$ as the case with $m<n$ is exactly the same upon relabelling the variables. 
\subsection{Cancellation in the Shifted Convolution Problem}
Our goal now is to detect cancellation in the shifted convolution sum $S_\psi$ (in an $L^2$ sense, when averaged over $k$ and $f\in H_k$). To do this we will use an averaged Petersson trace formula:
\begin{lem}\cite[Iwaniec, Luo, Sarnak, Lemma 10.1]{iwaniecLowLyingZeros2000}
For any positive numbers $m,n$ we have
\begin{align}\label{Petersson}\sum_{k \equiv 0 \mod{2}} &2h\Big(\frac{k-1}{K}\Big) \frac{2\pi^2}{k-1}\sum_{f\in H_k}\frac{ \lambda_f(m)\lambda_f(n)}{L(1, \sym^2f)}=\\
=&\hat{h}(0)K1_{m=n} -\pi^{1/2}(mn)^{-1/4}K \Im \Big(e^{-2\pi i/8}\sum_{c=1}^\infty \frac{S(m,n;c)}{\sqrt{c}} e_c(2\sqrt{mn})\hbar\Big(\frac{cK^2}{8\pi\sqrt{mn}}\Big)\Big)+ \nonumber \\
&+ O\Big(\frac{\sqrt{mn}}{K^4} \cdot \int_{-\infty}^\infty v^4 |\hat{h}(v)|dv+1_{m=n}\Big), \nonumber
\end{align}
where $\hat{h}$ denotes the Fourier transform of $h$ and $\hbar(v)=\int_0^\infty \frac{h(\sqrt{u})}{\sqrt{2\pi u}} e^{iuv} du$.
\end{lem}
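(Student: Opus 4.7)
My plan is to derive the averaged formula from the individual-weight Petersson trace formula and then asymptotically evaluate the resulting sum over $k$. For each even $k \geq 2$, the standard Petersson formula with harmonic weights $\omega_f = 2\pi^2/((k-1)L(1, \sym^2 f))$ reads
\[
\sum_{f \in H_k} \omega_f \lambda_f(m) \lambda_f(n) = 1_{m=n} + 2\pi i^{-k} \sum_{c \geq 1} \frac{S(m,n;c)}{c} J_{k-1}\Big(\frac{4\pi \sqrt{mn}}{c}\Big).
\]
I would multiply by $2h((k-1)/K)$ and sum over even $k \geq 2$. The diagonal contribution becomes $2 \cdot 1_{m=n} \sum_{k\text{ even}} h((k-1)/K)$, which by Poisson summation in $k$ equals $K \hat{h}(0) \cdot 1_{m=n}$ up to an exponentially small error (since $h$ is smooth and compactly supported, $\hat{h}$ decays faster than any polynomial), comfortably within the allowed $O(1_{m=n})$ term in the statement.

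The off-diagonal terms, after interchanging orders of summation, reduce to evaluating the Bessel moment
\[
T(x) := 2 \sum_{k\text{ even}} h\Big(\frac{k-1}{K}\Big) i^{-k} J_{k-1}(x), \qquad x = \frac{4\pi\sqrt{mn}}{c}.
\]
Since $h$ is compactly supported in $\mathbb{R}^+$, only $k \asymp K$ contributes. If $x \ll K^{1-\varepsilon}$, then $J_{k-1}(x)$ decays super-polynomially in $k$ and $T(x)$ is negligible; in the oscillatory regime $x \gtrsim K$ I would use the large-argument asymptotic $J_{k-1}(x) \sim \sqrt{2/(\pi x)} \cos(x - (k-1)\pi/2 - \pi/4)$. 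Combining $i^{-k}$ with the cosine and splitting into exponentials $e^{\pm i x}$ produces an oscillatory sum in $k$ whose phase becomes essentially quadratic after the Bessel asymptotic is inserted; the outgoing wave $e^{ix} = e_c(2\sqrt{mn})$ and the phase $e^{-i\pi/4} = e^{-2\pi i/8}$ emerge from this step.

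Next, I would apply Poisson summation (or, equivalently, approximate the even-$k$ sum by the corresponding integral) to the $k$-sum. After the substitution $k - 1 = K \sqrt{u}$, which linearizes the Bessel phase, the integral collapses into the transform defining $\hbar(v)$ with $v = c K^2 / (8 \pi \sqrt{mn})$, exactly the variable dictated by stationary phase. Collecting the prefactors---$K/2$ from converting the sum to an integral, $\sqrt{2/(\pi x)} = \sqrt{c/(2 \pi^2 \sqrt{mn})}$ from the Bessel asymptotic, and $e^{-i \pi/4}$ from the asymptotic phase---and taking imaginary parts to isolate the surviving oscillation yields the main term $-\pi^{1/2} (mn)^{-1/4} K \cdot \Im\big(e^{-2\pi i/8} \sum_c c^{-1/2} S(m,n;c) e_c(2\sqrt{mn}) \hbar(cK^2/(8\pi\sqrt{mn}))\big)$.

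The hardest step is the uniform treatment of $J_{k-1}(x)$ across the transition regime $x \asymp k \asymp K$, where the simple large-argument asymptotic fails and one must invoke a uniform expansion (e.g.\ in terms of Airy functions or via a Debye-type saddle-point analysis). This is also where the error term $\sqrt{mn}/K^4 \cdot \int v^4 |\hat{h}(v)|\,dv$ arises: each integration by parts in the continuous version of the $k$-sum costs a factor of $K^{-1}$ (hitting a derivative of $h$) while producing a factor of $v$ in the dual variable of $\hbar$, so four integrations give the $K^{-4}$ weight and the $v^4$ moment of $\hat{h}$, with $\sqrt{mn}$ absorbing the gain from the non-stationary oscillations.
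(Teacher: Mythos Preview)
The paper does not supply a proof of this lemma; it is quoted verbatim from Iwaniec--Luo--Sarnak (their Lemma~10.1) and used as a black box, with only the two remarks following it. So there is no ``paper's own proof'' to compare against.

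Your outline is the standard route to such a formula and captures the essential mechanism: Petersson for each $k$, then average in $k$, isolate the diagonal by Poisson, and treat the Kloosterman side via the oscillatory behaviour of $J_{k-1}(4\pi\sqrt{mn}/c)$ summed against $i^{-k}h((k-1)/K)$. One correction: the large-argument expansion $J_{k-1}(x)\sim\sqrt{2/(\pi x)}\cos(x-(k-1)\pi/2-\pi/4)$ is not adequate in the relevant range. The weight $\hbar(cK^2/(8\pi\sqrt{mn}))$ localises to $x=4\pi\sqrt{mn}/c\asymp K^{2}$, and there the first correction to that asymptotic is of relative size $(k-1)^{2}/x\asymp 1$, so it cannot be discarded. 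What is really needed is the Debye-type expansion with phase $\sqrt{x^{2}-(k-1)^{2}}-(k-1)\arccos((k-1)/x)-\pi/4$; expanding this phase for $x\asymp K^{2}$, $k\asymp K$ produces the quadratic piece $-(k-1)^{2}/(2x)$, and it is precisely this term that, after the substitution $k-1=K\sqrt{u}$, becomes the factor $e^{iuv}$ with $v=cK^{2}/(8\pi\sqrt{mn})$ defining $\hbar$. Your phrase ``linearizes the Bessel phase'' is therefore slightly off: the substitution does not linearise the phase, it converts the quadratic term in $k$ into the $uv$ in the definition of $\hbar$. With that adjustment your sketch is correct, and the error term indeed arises from the higher-order terms in the uniform Bessel expansion together with repeated integration by parts against $h$.
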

\begin{remark}We kept the dependency on $h$ explicit in the error term, as our weight function will depend on $n,m$ and $K$. Similar computations are also done by Khan in \cite{khanNonvanishingSymmetricSquare2010} (see for example Lemma 2.6 and expression (2.29) therein).
\end{remark}
\begin{remark}\label{decay}
Integrating by parts several times shows that $\hbar(v) \ll_{A}v^{-A}$ for any $A>0$. In particular, the second term on the right hand side of \eqref{Petersson} is absorbed in the error term if $cK^2/\sqrt{mn}>K^\varepsilon$. In our case $mn$ will be of size $K^4$ and so this effectively restricts the range of $c$ to $c \ll K^\varepsilon$.
\end{remark}

\subsection{Variance Computation}\label{VarianceComputation}
Now we compute the main term of the variance $V(\psi_1, \psi_2)$, which is given by the averaged shifted convolutioin problem
\begin{equation}\label{Variance}
\mathcal{M}(\psi_1, \psi_2)=\sum_{k \equiv 0 \mod{2}}h\Big(\frac{k-1}{K}\Big) \sum_{f \in H_k} L(1, \sym^2f) S_{\psi_1}S_{\psi_2}
\end{equation}
with
 \begin{align*}S_{\psi}=&\frac{\pi}{2L(1,\sym^2f)}\sum_{\ell \neq 0}\sum_{n}\frac{\lambda_f(n)\lambda_f(n+\ell)}{\sqrt{n(n+\ell)}}\exp\Big(\frac{-k\ell^2}{2(2n+\ell)}\Big)\psi\Big(\frac{k}{2\pi (2n+\ell)}\Big)+O_\psi(k^{-1/2+\epsilon})\\
= &\frac{\pi}{2L(1,\sym^2f)}\sum_{\substack{\ell, d \\ d|\ell}} \sum_{n} \frac{\lambda_f(n(n+\ell/d))}{\sqrt{d^2 n(n+\ell/d)}} \exp\Big( \frac{-k \ell^2}{2(2nd+ \ell)^2}\Big)\psi \Big( \frac{k}{2\pi (2nd+\ell)}\Big) +O_\psi(k^{-1/2+\epsilon})\\
 = &\frac{\pi}{2L(1,\sym^2f)} \sum_{d} \sum_{m} \sum_{n} \frac{\lambda_f(n(n+m))}{d\big(n(n+m)\big)^{1/2}}\exp\Big(-\frac{km^2}{2(2n+m)^2}\Big)\psi\Big(\frac{k}{2\pi d (n+m)}\Big)+O_\psi(k^{-1/2+\varepsilon})
 \end{align*}
For the second equality we used the Hecke relations 
 $$\lambda_f(n)\lambda_f(n+\ell)=\sum_{d|n, d|\ell} \lambda_f\Big(\frac{n(n+\ell)}{d^2}\Big)$$ and replaced $n$ by $nd$. For the third equality we wrote $\ell_i/d_i=m_i$.
After expanding $S_{\psi_1}, S_{\psi_2}$ we see that the main term of  $\mathcal{M}(\psi_1, \psi_2)$ is equal to
$$ \sum_{\substack{n_1,n_2\\ d_1,d_2 \\m_1,m_2}} \sum_{k\equiv 0\mod{2}} h^*\Big(\frac{k-1}{K}\Big)  \frac{2\pi^2}{k-1} \sum_{f\in H_k}\frac{1}{L(1, \sym^2f)} \frac{\lambda_f(n_1(n_1+m_1))\lambda_f(n_2(n_2+m_2))}{d_1d_2\big(n_1(n_1+m_1)n_2(n_2+m_2)\big)^{1/2}}$$
with 
\begin{align*}
h^*(t)&=h^*_{n_1,n_2,d_1,d_2,m_1,m_2,K}(t)\\
&=h(t)\frac{tK}{8}\psi_1\Big(\frac{tK}{2\pi d_1(n_1+m_1)}\Big)\psi_2\Big(\frac{tK}{2\pi d_2(n_2+m_2)}\Big)\exp\Big(-\frac{tKm_1^2}{2(2n_1+m_1)^2}-\frac{tKm_2^2}{2(2n_2+m_2)^2}\Big).
\end{align*}
We can now apply the averaged Petersson trace formula (Lemma \ref{Petersson}) so that $\mathcal{M}(\psi_1, \psi_2)= \mathcal{D} + \mathcal{OD}$ with the diagonal
\begin{align}\label{diagonalterm}\mathcal{D}=K\sum_{\substack{n_1,n_2\\ d_1,d_2 \\m_1,m_2}}\frac{1_{n_1(n_1+m_1)=n_2(n_2+m_2)}}{d_1d_2\big(n_1(n_1+m_1)n_2(n_2+m_2)\big)^{1/2}} \cdot \widehat{h^*}(0)
\end{align}
and the off-diagonal
\begin{align}
\mathcal{OD}=- \sqrt{\pi} K \Im\bigg(&e^{-2\pi i/8}\sum_{\substack{n_1,n_2\\ d_1,d_2 \\m_1,m_2}} \sum_{c=1}^\infty \frac{S(n_1(n_1+m_1),n_2(n_2+m_2);c)}{\sqrt{c}} e_c(2\sqrt{n_1(n_1+m_1)n_2(n_2+m_2)})\cdot \\
\times&\frac{1}{d_1d_2 \big(n_1(n_1+m_1)n_2(n_2+m_2)\big)^{3/4}}\cdot  \hbar^*\Big(\frac{c K^2}{8\pi \sqrt{n_1(n_1+m_1)n_2(n_2+m_2)}}\Big)\bigg),\nonumber
\end{align}
where $\hbar^*(v)=\int_0^\infty \frac{h^*(\sqrt{u})}{\sqrt{2\pi u}}e^{iuv} du$.
Now that we have established a formula 
\begin{equation}\label{main}
\mathcal{M}(\psi_1,\psi_2)=\mathcal{D}+\mathcal{OD},
\end{equation} we start by evaluating the diagonal term.
 \subsection{Evaluating the Diagonal}
 \begin{lem}\label{diagonal} If $\mathcal{D}$ is given by \eqref{diagonalterm} then
\begin{align*} 
\mathcal{D}=&K^{3/2}\log K\cdot \frac{\sqrt{2}\pi}{32}\tilde{\psi_1}(0)\tilde{\psi_2}(0)  \cdot \int_0^\infty \frac{h(\sqrt{u})u^{1/4}}{\sqrt{2\pi u}} du +\\
&+K^{3/2} \frac{\sqrt{2}\pi}{64}\tilde{\psi_1}(0)\tilde{\psi_2}(0)\int_0^\infty \frac{h(\sqrt{u})u^{1/4}}{\sqrt{2\pi u}} \log(u) du+\\
&+K^{3/2}\int_0^\infty \frac{h(\sqrt{u})u^{1/4}}{\sqrt{2\pi u}} du \cdot \Big(\frac{\sqrt{2}\pi}{16}\Big(\frac{3}{2} \gamma - \log (4\pi)\Big) \tilde{\psi_1}(0)\tilde{\psi_2}(0) +\frac{\sqrt{2}\pi}{16} \tilde{\psi_1}(0)\tilde{\psi_2}'(0) \Big)+\\
&+K^{3/2} \int_0^\infty \frac{h(\sqrt{u})u^{1/4}}{\sqrt{2\pi u}} du \cdot \frac{\sqrt{2}\pi}{16} \frac{1}{2\pi i} \int_{(1)} \tilde{\psi}_1(-s_2)\tilde{\psi_2}(s_2)\zeta(1-s_2)\zeta(1+s_2)ds_2 +\\
&+O_{\psi_1, \psi_2}(K^{1+\varepsilon})
\end{align*}
as $K \to \infty$.
 \end{lem}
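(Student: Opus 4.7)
The strategy is a Mellin--contour-shift argument: apply Mellin inversion to $\psi_1$ and $\psi_2$, carry out the sums over $d_1, d_2$ to produce $\zeta(1+s_i)$ factors, integrate out the Gaussian weights, identify the resulting Dirichlet series in $s_1, s_2$, and extract the main terms by shifting contours past poles.

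\textbf{Setup and $d_i$-sums.} Write $\psi_i(y)=\tfrac{1}{2\pi i}\int_{(\sigma_i)}\tilde{\psi_i}(s_i)y^{s_i}\,ds_i$ in the definition of $\widehat{h^*}(0)$ and interchange integration with summation. Each sum $\sum_{d_i\geq 1}d_i^{-1-s_i}=\zeta(1+s_i)$ contributes a simple pole at $s_i=0$. Parametrizing by $N=n_1(n_1+m_1)=n_2(n_2+m_2)$ and writing $n_i'=n_i+m_i$, the remaining arithmetic sum becomes
\[
\sum_{N\geq 1}\frac{1}{N}\sum_{\substack{n_i n_i'=N\\ n_i<n_i'}}\frac{1}{(n_1')^{s_1}(n_2')^{s_2}}\prod_{i=1,2} \exp\!\Big(\!-\tfrac{tK(n_i'-n_i)^2}{2(n_i+n_i')^2}\Big),
\]
integrated against $h(t)\cdot(tK/8)\cdot(tK/(2\pi))^{s_1+s_2}$ in $t$ and against the two $s_i$-contours.

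\textbf{Gaussian handling and contour shifts.} Compact support of $\psi_i$ forces $n_i'\asymp K/d_i$ and the Gaussians concentrate $n_i'$ near $\sqrt N$ at scale $\sqrt{N/K}$. I would represent each exponential via the Mellin--Barnes transform $e^{-x}=\tfrac{1}{2\pi i}\int_{(c)}\Gamma(w)x^{-w}dw$, converting the arithmetic sum into an unrestricted double divisor convolution. This admits a meromorphic continuation in $s_1, s_2$ via a product of Riemann zeta functions arising from the Ramanujan identity
\[
\sum_N \sigma_{-s_1}(N)\sigma_{-s_2}(N)N^{-s}=\zeta(s)\zeta(s+s_1)\zeta(s+s_2)\zeta(s+s_1+s_2)/\zeta(2s+s_1+s_2),
\]
with relevant poles at $s_1=0$, $s_2=0$, and $s_1+s_2=0$. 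Shifting both $s_i$-contours leftward past these poles produces two main contributions: a triple-pole residue at $s_1=s_2=0$ whose Laurent expansion against $(tK)^{s_1+s_2}=1+(s_1+s_2)\log(tK)+\cdots$ and $\tilde{\psi_i}(s_i)=\tilde{\psi_i}(0)+s_i\tilde{\psi_i}'(0)+\cdots$ yields the $K^{3/2}\log K$, the $\log u$-weighted, the $\gamma$--$\log(4\pi)$-weighted, and the $\tilde{\psi_2}'(0)$ main terms; and a residue along $s_1=-s_2$ producing the $K^{3/2}$ integral against $\tilde{\psi_1}(-s_2)\tilde{\psi_2}(s_2)\zeta(1-s_2)\zeta(1+s_2)$. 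The residual shifted double integral is $O(K^{1+\varepsilon})$ by Stirling and the rapid vertical decay of $\tilde{\psi_i}$.

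\textbf{Main obstacle.} The delicate issue is the restriction $n_i<n_i'$ combined with the Gaussian weight on a discrete divisor lattice. I plan to handle this by symmetrization: the two halves $n_i<n_i'$ and $n_i>n_i'$ are related by $s_i\mapsto -s_i$, and the diagonal $n_i=n_i'$ is of lower order, so that the full symmetric divisor sum effectively appears and the Ramanujan identity applies. The remaining bookkeeping at the triple pole, where the Laurent coefficients of $\zeta(1+s)=1/s+\gamma+\cdots$ interact with $(tK/(2\pi))^{s_1+s_2}$ and the $\Gamma$-factors from the Mellin--Barnes expansion, produces the explicit constants $\tfrac{3}{2}\gamma-\log(4\pi)$ and the clean separation into the four tabulated main terms.
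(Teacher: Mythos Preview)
Your plan has a genuine gap at the point where you invoke the Ramanujan identity. After Mellin--Barnes on the Gaussian weight $\exp\bigl(-tK(n_i'-n_i)^2/(2(n_i+n_i')^2)\bigr)$ you pick up arithmetic factors of the shape $(n_i'-n_i)^{-2w_i}(n_i+n_i')^{2w_i}$, which in terms of $N=n_in_i'$ read $\bigl((N+(n_i')^2)/((n_i')^2-N)\bigr)^{2w_i}$. These are not pure powers of the divisor $n_i'$, so the inner sum over factorizations of $N$ is \emph{not} $\sigma_{-s_i}(N)$ (nor any twist thereof), and the identity $\sum_N \sigma_{-s_1}(N)\sigma_{-s_2}(N)N^{-s}=\zeta(s)\zeta(s+s_1)\zeta(s+s_2)\zeta(s+s_1+s_2)/\zeta(2s+s_1+s_2)$ does not apply. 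Your symmetrization remark does not address this: swapping $n_i\leftrightarrow n_i'$ leaves the offending factor $(n_i'-n_i)^2/(n_i+n_i')^2$ invariant, so no sign flip in $s_i$ removes it. In short, you have not produced any Dirichlet series with explicit meromorphic continuation in $s_1,s_2$, and without that the contour shifts cannot be justified.

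What the paper does instead is to first dispose of the equation $n_1(n_1+m_1)=n_2(n_2+m_2)$ by an elementary counting argument: completing the square gives $(2n_1+m_1)^2-(2n_2+m_2)^2=m_1^2-m_2^2$, and since $m_i\ll K^{1/2+\varepsilon}$ the right side takes $O(K^{1+\varepsilon})$ values, each of which determines $(n_1,n_2)$ up to a divisor bound. Hence the solutions with $(n_1,m_1)\neq(n_2,m_2)$ contribute only $O(K^{1+\varepsilon})$, and one is left with a \emph{single} free pair $(n,m)$ with $n_1=n_2=n$, $m_1=m_2=m$. Now Mellin inversion on $\psi_1,\psi_2$ and on the exponential yields four genuinely decoupled sums over $d_1,d_2,n,m$, producing $\zeta(1+s_1)\zeta(1+s_2)\zeta(2+s_1+s_2-2s_3)\zeta(2s_3)$, and the contour shifts go through cleanly. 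This reduction to the true diagonal is the missing idea in your argument; once you have it, the rest of your Mellin outline is essentially the paper's computation.
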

 \begin{proof}
 To evaluate $\mathcal{D}$ (see \eqref{diagonalterm}), we first show that most solutions to $n_1(n_1+m_1)=n_2(n_2+m_2)$ arise from the diagonal, i.e.  $n_1=n_2$ and $m_1=m_2$. We assume that $n_1\neq n_2$ and $m_1\neq m_2$, as otherwise we are done. By completing the square the condition $n_1(n_1+m_1)=n_2(n_2+m_2)$ is equivalent to 
$$(2n_1+m_1)^2-(2n_2+m_2)^2=m_1^2-m_2^2.$$
Since $m_i \ll K^{1/2+\varepsilon}$ there are at most $K^{1+2\varepsilon}$ choices for the integer $M=m_1^2-m_2^2$. Once $M, m_1, m_2$ are fixed, $n_1$ and $n_2$ are determined up to $K^{\varepsilon'}$. To see this abbreviate $A=(n_1+m_1+n_2+m_2)$ and $B=(n_1+m_1-n_2-m_2)$. Then $M=4AB$ and there are at most $K^{\varepsilon'}$ choices for $A$ and $B$ (as there are at most $K^{\varepsilon'}$ divisors of $M$). Now $A, B, m_1, m_2$ are determined and so are $n_1, n_2$.  
It follows that there are at most $K^{1+2\varepsilon+\varepsilon'}$ off-diagonal terms, whose contribution to $\mathcal{D}$ is bounded by
\begin{align*}K \sum_{\substack{n_1, n_2, m_1, m_2\\ n_1\neq n_2, m_1\neq m_2}} \sum_{d_1, d_2} \frac{1_{n_1(n_1+m_1)=n_2(n_2+m_2)}}{d_1d_2\big(n_1(n_1+m_1)n_2(n_2+m_2)\big)^{1/2}} \cdot \widehat{h^*}(0) \ll_{\psi_1, \psi_2} K^{1+2\varepsilon +\varepsilon'}.
\end{align*}
Hence,
\begin{align*}
\mathcal{D}=&\frac{\sqrt{2\pi}K^2}{16} \sum_{n_1, d_1, d_2, m_1}\frac{1}{d_1d_2 n_1(n_1+m_1)}\cdot \\
&\times \int_0^\infty \frac{h(\sqrt{u})\sqrt{u}}{\sqrt{2\pi u}}\psi_1\Big(\frac{\sqrt{u}K}{2\pi d_1(n_1+m_1)}\Big)\psi_2\Big(\frac{\sqrt{u}K}{2\pi d_2(n_2+m_2)}\Big) \exp\Big(- \frac{\sqrt{u}K m_1^2}{(2n_1
+m_1)^2}\Big) du\\
&+O_{\psi_1, \psi_2}(K^{1+\varepsilon}).
\end{align*}

Since, $m_i d_i \ll K^{1/2+\varepsilon}$ and $m_i/n_i \ll K^{1/2+\varepsilon}$ for $i=1,2$ we can simplify the expression for the off-diagonal $\mathcal{D}$ by a Taylor expansion and get
\begin{align*}\mathcal{D}=&\frac{\sqrt{2\pi}K^2}{16} \sum_{n_1, d_1,d_2, m_1} \frac{1}{d_1d_2n_1^2} \int_0^\infty \frac{ h(\sqrt{u})\sqrt{u}}{\sqrt{2\pi u}} \psi_1\Big(\frac{\sqrt{u}K}{4\pi n_1d_1}\Big)\psi_2\Big(\frac{\sqrt{u}K}{4\pi n_1 d_2}\Big)\exp\Big(-\frac{\sqrt{u}Km_1^2}{4n_1^2}\Big)du\\
&+O_{\psi_1, \psi_2}(K^{1+\varepsilon}).
\end{align*}
To evaluate the main term of $\mathcal{D}$ asymptotically we perform an inverse Mellin transform on the smooth compactly-supported functions $\psi_1, \psi_2$ and the exponential function. We then shift the contours and collect the residues. The main term of $\mathcal{D}$ is equal to
\begin{align*}
\frac{\sqrt{2\pi} K^2}{16} \frac{1}{(2\pi i)^3}& \int_{(1/2+\varepsilon)}\int_{(1)}\int_{(1)} \int_0^\infty \frac{h(\sqrt{u})\sqrt{u}}{\sqrt{2\pi u}}\tilde{\psi}_1(s_1)\tilde{\psi_2}(s_2)\Gamma(s_3)\cdot \\
&\cdot \sum_{n_1, d_1,d_2,m_1} \frac{1}{d_1d_2n_1^2}\Big(\frac{\sqrt{u}K}{4\pi n_1 d_1}\Big)^{s_1}\Big(\frac{\sqrt{u}K}{4\pi n_1d_2}\Big)^{s_2}\Big(\frac{4n_1^2}{\sqrt{u}Km_1^2}\Big)^{s_3} du ds_1ds_2ds_3.
\end{align*}
This is turn can be rewritten as
\begin{align*}\frac{\sqrt{2\pi} K^2}{16} \frac{1}{(2\pi i)^3}& \int_{(1/2+\varepsilon)}\int_{(1)}\int_{(1)} \int_0^\infty \frac{h(\sqrt{u})\sqrt{u}}{\sqrt{2\pi u}}u^{(s_1+s_2-s_3)/2} K^{s_1+s_2-s_3} (4\pi)^{-s_1-s_2}4^{s_3}\cdot \\
&\cdot \tilde{\psi}_1(s_1)\tilde{\psi_2}(s_2)\Gamma(s_3)\zeta(1+s_1)\zeta(1+s_2)\zeta(2+s_1+s_2-2s_3)\zeta(2s_3) duds_1ds_2ds_3
\end{align*}
We start by shifting the contour from $\Re(s_3)=1/2+\varepsilon$ to $\Re(s_3)=100$. The integral on the new line $Re(s_3)=100$ is negligible by the rapid decay of $\tilde{\psi_1}(s_1), \tilde{\psi_2}(s_2)$ and the Gamma function (it contributes at most $O_{\psi_1, \psi_2}(K^{-96})$). The simpe pole at $s_3=1/2+s_1/2+s_2/2$ yields the residue
\begin{align}\label{diagonalstep4}\frac{\sqrt{2\pi} K^2}{16} \frac{1}{(2\pi i)^2}&\int_{(1)}\int_{(1)} \int_0^\infty \frac{h(\sqrt{u})\sqrt{u}}{\sqrt{2\pi u}}u^{(-1/2+s_1/2+s_2/2)/2} K^{-1/2+s_1/2+s_2/2} (4\pi)^{-s_1-s_2}4^{1/2+s_1/2+s_2/2} \\
&\cdot \tilde{\psi}_1(s_1)\tilde{\psi_2}(s_2)\Gamma(1/2+s_1/2+s_2/2)\zeta(1+s_1)\zeta(1+s_2)\frac{1}{2}\zeta(1+s_1+s_2)duds_1ds_2 \nonumber
\end{align}
Next we move the line $\Re(s_1)=1$ to $\Re(s)=-2+\varepsilon$ (stopping just before the pole of the Gamma function), picking up simple poles at $s_1=0$ and $s_1=-s_2$ . We use again the rapid decay of $\tilde{\psi_1}(s_1), \tilde{\psi_2}(s_2)$ to show that the new line integral is bounded by $O_{\psi_1, \psi_2}(K^{1+\varepsilon})$.  At $s_1=0$ the residue is
\begin{align}\label{maintermdiagonal}\frac{\sqrt{2\pi} K^2}{16} \frac{1}{2\pi i} \int_{(1)}& \int_0^\infty \frac{h(\sqrt{u})\sqrt{u}}{\sqrt{2\pi u}}u^{(-1/2+s_2/2)/2} K^{-1/2+s_2/2}  (4\pi)^{-s_2}4^{1/2+s_2/2} \\
&\cdot \tilde{\psi}_1(0)\tilde{\psi_2}(s_2)\Gamma(1/2+s_2/2)\zeta(1+s_2)^2\frac{1}{2}duds_2 \nonumber
\end{align}
We follow up with the shift from $\Re(s_2)=1$ to $\Re(s_2)=-1+\varepsilon$ and pick up a double pole at $s_2=0$. The error term from the line at $\Re(s_2)=-1+\varepsilon$ is again $O_{\psi_1, \psi_2}(K^{1+\varepsilon})$. To compute the residue at the double pole we use the expansion 
$\zeta(1+s_2)^2=\frac{1}{s_2^2}+\frac{2\gamma}{s}+\cdots$ for $s_2$ close to $0$, where $\gamma$ is the Euler-Mascheroni constant. The residue is then given by
$$\frac{\sqrt{2\pi} K^{3/2}}{16} \cdot  \int_0^\infty \frac{h(\sqrt{u})u^{1/4}}{\sqrt{2\pi u}} \cdot \lim_{s_2 \to 0} \frac{d}{ds} \Big(s^2 \cdot \Big(\frac{K\sqrt{u}}{4\pi^2}\Big)^{\frac{s_2}{2}} \tilde{\psi_2}(s_2)\Gamma(1/2+s_1/2) \cdot \Big(\frac{1}{s_2^2}+\frac{2\gamma}{s}+\cdots \Big)\Big)du$$
The limit is equal to
\begin{align*}\lim_{s_2\to 0} \Big(\frac{K\sqrt{u}}{4\pi^2}\Big)^{\frac{s_2}{2}}\cdot \Big( &\frac{1}{2} \log \Big(\frac{K\sqrt{u}}{4\pi^2}\Big)\tilde{\psi_2}(s_2)\Gamma(1/2+s_2/2) + \\
&+ \tilde{\psi_2}'(s_2)\Gamma(1/2+s_2/2) +\frac{1}{2}\tilde{\psi_2}(s_2)\Gamma'(1/2+s_2/2)+2\gamma \tilde{\psi_2}(s_2)\Gamma(1/2+s_2/2)\Big)
\end{align*}
We evaluate the limit, using $\Gamma'(1/2)=\sqrt{\pi} (-\gamma-\log4)$, to
$$\frac{\sqrt{\pi}}{2}\tilde{\psi_2}(0)  \log K  +  \frac{\sqrt{\pi}}{4}\tilde{\psi_2}(0) \log u +\sqrt{\pi}\tilde{\psi_2}(0) \Big(\frac{3}{2} \gamma - \log (4\pi)\Big) + \sqrt{\pi}\tilde{\psi_2}'(0)$$
Thus \eqref{maintermdiagonal} is equal to
\begin{align*}
K^{3/2}&\log K\cdot \frac{\sqrt{2}\pi}{32}\tilde{\psi_1}(0)\tilde{\psi_2}(0)  \cdot \int_0^\infty \frac{h(\sqrt{u})u^{1/4}}{\sqrt{2\pi u}} du \\
+&K^{3/2} \frac{\sqrt{2}\pi}{64}\tilde{\psi_1}(0)\tilde{\psi_2}(0)\int_0^\infty \frac{h(\sqrt{u})u^{1/4}}{\sqrt{2\pi u}} \log(u) du+\\
+&K^{3/2}\int_0^\infty \frac{h(\sqrt{u})u^{1/4}}{\sqrt{2\pi u}} du \cdot \Big(\frac{\sqrt{2}\pi}{16}\Big(\frac{3}{2} \gamma - \log (4\pi)\Big) \tilde{\psi_1}(0)\tilde{\psi_2}(0) +\frac{\sqrt{2}\pi}{16} \tilde{\psi_1}(0)\tilde{\psi_2}'(0) \Big)\\
+&O_{\psi_1, \psi_2}(K^{1+\varepsilon})
\end{align*}
This is our main term. There is another term of size $K^{3/2}$ coming from the residue of expression \eqref{diagonalstep4} at $s_1=-s_2$. This residue is given by
\begin{align}\label{diagonalstep6}K^{3/2} \int_0^\infty \frac{h(\sqrt{u})u^{1/4}}{\sqrt{2\pi u}} du \cdot \frac{\sqrt{2}\pi}{16} \frac{1}{2\pi i} \int_{(1)} \tilde{\psi}_1(-s_2)\tilde{\psi_2}(s_2)\zeta(1-s_2)\zeta(1+s_2)ds_2 
\end{align}
This completes the proof of the lemma. 
\end{proof}
\subsection{Auxiliary Lemmas}
In the following section we record some lemmas that we use to compute the off-diagonal term asymptotically. We start with some observations regarding the function $\hbar(v)$, appearing in the off-diagonal term. For any complex number $w$ define the function
$$\hbar^{\Re}_{w}(v)=\int_0^\infty \frac{h(\sqrt{u})}{\sqrt{2\pi u}} u^{w/2}\cos(uv)du.$$
For $w=0$ this is the real part of $\hbar(v)$. The Mellin transform of this function and its properties were evaluated by Khan \cite[Lemma 3.5]{khanNonvanishingSymmetricSquare2010} (and also Das-Khan \cite[sec. 2.6]{dasThirdMomentSymmetric2018}). Note that Khan and Das-Khan treat $\hbar_w(v)$ but the observations also go through  for the real part that we consider. As in \cite[sec. 2.6]{dasThirdMomentSymmetric2018} we have by repeated integration by parts the bound
\begin{equation}\label{hbarreal}
\frac{\partial^j}{\partial v^j} \hbar^{\Re}_{w}(v) \ll (1+|w|)^A|v|^{-A}
\end{equation} for any non-negative integer $j,A$ and the implied constant depending on $\Re(w), j, A$. 
We denote the Mellin transform of $\hbar^{\Re}_{w}$ by
$$\tilde{\hbar}^{\Re}_w(s)=\int_0^\infty \hbar^{\Re}_w(v)v^s \frac{dv}{v}.$$
The bound \eqref{hbarreal} implies that the Mellin transform is absolutely convergent and holomorphic for $\Re(s)>0$. Integrating by parts several times and using again the bound \eqref{hbarreal} shows that the Mellin transform deacys rapidly. More precisiely we have
$$\tilde{\hbar}^{\Re}_w(s) \ll (1+|w|)^{A+\Re(s)+1}(1+|s|)^{-A},$$
with the implied constants depending on $\Re(w)$ and $A$. 
By Mellin inversion we have for $c>0$
\begin{equation}\label{inverseMellin}\hbar^{\Re}_w(v)= \frac{1}{2\pi i}\int_{(c)}\tilde{\hbar}^{\Re}_w(s)v^{-s} ds.
\end{equation}
As in \cite[Lemma 3.5]{khanNonvanishingSymmetricSquare2010} we can explicitly evaluate the Mellin transform of $\hbar^\Re_w$ within the range $0 < \Re(s) <1$. There we get
$$\tilde{\hbar}^{\Re}_w(s)=\int_0^\infty \frac{h(\sqrt{u})}{\sqrt{2\pi u}}u^{w/2}\Gamma(s)\cos(\pi s/2) du.$$

The next two lemmas will be useful to treat the exponential sum in the off-diagonal term.
\begin{lem}[Poisson summation]\label{Poisson}Let $f$ be a rapidly decaying, smooth function, then
$$\sum_{n\equiv a\mod{c}} f(n)= \frac{1}{c} \sum_{n} \hat{f}\Big(\frac{n}{c}\Big)e_c(an),$$
where $\hat{f}(\xi)=\int_{-\infty}^\infty f(x)e(-x\xi) dx$ denotes the Fourier transform of $f$.
\end{lem}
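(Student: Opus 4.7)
The plan is to reduce the sum in arithmetic progression to the classical Poisson summation formula on $\mathbb{Z}$ by parametrizing the residue class and applying a linear change of variables to track how the Fourier transform rescales.

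First, I would write every integer $n \equiv a \pmod{c}$ uniquely as $n = a + cm$ with $m \in \mathbb{Z}$, so that
\[
\sum_{n \equiv a \!\!\!\mod c} f(n) \;=\; \sum_{m \in \mathbb{Z}} f(a + cm).
\]
Next, I would introduce the auxiliary function $g(x) := f(a + cx)$. Since $f$ is smooth and rapidly decaying, so is $g$, and the classical Poisson summation formula applies:
\[
\sum_{m \in \mathbb{Z}} g(m) \;=\; \sum_{n \in \mathbb{Z}} \hat{g}(n).
\]

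The remaining step is to express $\hat{g}$ in terms of $\hat{f}$. A direct calculation with the substitution $y = a + cx$ gives
\[
\hat{g}(n) \;=\; \int_{-\infty}^{\infty} f(a+cx)\, e(-xn)\, dx
\;=\; \frac{1}{c}\int_{-\infty}^{\infty} f(y)\, e\!\left(-\frac{(y-a)n}{c}\right) dy
\;=\; \frac{1}{c}\, e_c(an)\, \hat{f}\!\left(\frac{n}{c}\right).
\]
Substituting this into the Poisson identity for $g$ yields the claimed formula. No serious obstacle arises: the only analytic input is the classical Poisson summation formula on $\mathbb{Z}$, which is guaranteed by the rapid decay and smoothness of $f$ (so that both sides converge absolutely and the Fourier series of the periodization of $f$ converges pointwise). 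The arithmetic content is entirely captured by the character $e_c(an)$ that emerges from shifting the origin of the variable of integration by $a$.
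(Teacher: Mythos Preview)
Your proof is correct and follows exactly the approach indicated in the paper: the paper's proof simply states that the result ``follows immediately from the classical Poisson summation formula and noting that $n\equiv a\mod{c}$ is a shifted lattice of $\mathbb{Z}$,'' and your argument is precisely the detailed execution of this idea.
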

\begin{proof}
This follows immediately from the classical Poisson summation formula and noting that $n\equiv a\mod{c}$ is a shifted lattice of $\mathbb{Z}$. 
\end{proof}
We detect cancellation in the off-diagonal term with the stationary phase method. We use the following version of Blomer, Khan and Young, which is a special case of their Proposition 8.2 in \cite{blomerDistributionMassHolomorphic2013b}
\begin{lem}[Stationary phase] \label{stationaryphase}
Let $X,Y,V,V_1,Q>0$ and $Z:=Q+X+Y+V_1+1$, and assume that 
$$Y \geq Z^{3/20},\quad V_1 \geq V\geq \frac{QZ^{1/40}}{Y^{1/2}}.$$
Suppose that $h$ is a smooth function on $\mathbb{R}$ with support on an interval $J$ of length $V_1$, satisfying
$$h^{(j)}(t)\ll_j XV^{-j}$$ for all $j \in \mathbb{N}_0$. Suppose $f$ is a smooth function on $J$ such that there exists a unique point $t_0\in J$ such that $f'(t_0)=0$, and furthermore
$$f''(t) \gg YQ^{-2}, \quad f^{(j)}(t)\ll_j YQ^{-j}, \quad \text{ for } j\geq 1 \text{ and } t \in J.$$
Then 
$$\int_{-\infty}^\infty h(t)e^{2\pi if(t)}dt = e^{\sgn(f''(t_0))\cdot \pi i/4}\frac{e^{2\pi if(t_0)}}{\sqrt{|f''(t_0)|}} h(t_0)+ O\Big(\frac{Q^{3/2}X}{Y^{3/2}}\cdot\big(V^{-2}+(Y^{2/3}/Q^2)\big)\Big).$$
In particular, we also have the trivial bound
$$\int_{-\infty}^\infty h(t) e^{2\pi i f(t)} dt \ll \frac{XQ}{\sqrt{Y}}+1.$$
\end{lem}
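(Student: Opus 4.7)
The plan is to adapt the classical method of stationary phase, tracking error terms against the specific hypotheses on the derivatives of $h$ and $f$. After translating coordinates so that $t_0 = 0$, the natural Fresnel width is $\delta := Q/\sqrt{Y}$, coming from $|f''| \gg Y/Q^2$; the assumption $V \geq QZ^{1/40}/\sqrt{Y}$ says $V \geq \delta Z^{1/40}$, so the amplitude $h$ is essentially constant across the stationary region, which is the standard regime in which a clean asymptotic is available.

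First I would introduce a smooth cutoff $\chi$ supported in $|t| \leq W$, with $W$ slightly larger than $\delta$, and split the integral into a near part and a far part. The far part, where $|t| > W$, I would handle by repeated integration by parts against $e^{2\pi i f(t)}$: from $f'(t_0) = 0$ together with $|f''| \gg Y/Q^2$ one gets $|f'(t)| \gg Y|t|/Q^2$ on the support of $1 - \chi$, and each pass gains a factor of $Q^2/(Y|t|) \cdot (V^{-1} + |t|^{-1})$ after accounting for derivatives of $h$, of $\chi$, and of $1/f'$ (using $f^{(j)} \ll_j Y Q^{-j}$). Thanks to $Y \geq Z^{3/20}$, enough iterations produce a contribution smaller than any negative power of $Z$, easily absorbed into the claimed error.

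For the near part, Taylor expand $f(u) = f(0) + \tfrac12 f''(0) u^2 + R(u)$ with $|R(u)| \ll Y|u|^3/Q^3$ on $|u| \leq W$, and substitute $u = v/\sqrt{|f''(0)|}$. The quadratic piece reduces to a Fresnel integral $\int_{\mathbb{R}} e^{\pm \pi i v^2}\,dv = e^{\pm \pi i/4}$, producing the explicit main term $e^{\sgn(f''(t_0))\pi i/4}\, h(t_0)\, e^{2\pi i f(t_0)}/\sqrt{|f''(t_0)|}$. The error from replacing $h(u)$ by $h(t_0)$ is controlled by $\|h''\|_\infty \cdot W^3 \ll X W^3/V^2$, and after dividing by $\sqrt{|f''(t_0)|}$ this yields the $V^{-2}$ piece of the stated bound once the prefactor $Q^{3/2} X/Y^{3/2}$ is factored out. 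The error from linearizing $e^{2\pi i R(u)} = 1 + O(R(u))$ is controlled by $X \cdot Y W^4/Q^3$, and produces the $Y^{2/3}/Q^2$ piece after the optimal choice of $W$.

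The main obstacle is balancing the cutoff scale $W$: it must be large enough that the integration-by-parts argument on the far part is genuinely effective (so that $|f'|$ is large on the support of $1-\chi$ and the penalty from $|t|^{-1}$-type factors is harmless), yet small enough that the cubic Taylor remainder $R(u)$ remains controllable on the near part. The optimal choice $W \asymp Q/Y^{1/3}$ is precisely what produces the $Y^{2/3}/Q^2$ contribution, and the hypothesis $Y \geq Z^{3/20}$ is exactly the quantitative condition ensuring this optimal $W$ lies just above the Fresnel width $\delta$ by a small positive power of $Z$, so that the Fresnel integral genuinely captures the stationary contribution. The trivial bound $XQ/\sqrt{Y} + 1$ follows either by taking the main term estimate, or in degenerate cases by the direct bound $\int |h| \leq X V_1$ combined with the hypothesis $V_1 \geq V$.
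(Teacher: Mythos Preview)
The paper does not prove this lemma from scratch: its entire proof is a citation to Proposition~8.2 of Blomer--Khan--Young with the choice $\delta = 1/20$, together with the remark that the non-leading terms in their asymptotic expansion are bounded trivially by the stated error. Your proposal instead attempts to rederive the stationary-phase expansion by hand via a smooth cutoff at scale $W$, Taylor expansion of the phase on the near part, and repeated integration by parts on the far part. So the routes are genuinely different: the paper outsources the analysis, while you try to carry it out.

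However, your sketch has a real gap in the error bookkeeping. You claim that the cubic-remainder error is $X \cdot Y W^4/Q^3$ and that the ``optimal'' choice $W \asymp Q/Y^{1/3}$ produces the $Y^{2/3}/Q^2$ piece of the stated bound. But with that $W$ one gets
\[
X \cdot \frac{Y W^4}{Q^3} \;=\; X \cdot \frac{Y\,(Q/Y^{1/3})^4}{Q^3} \;=\; \frac{XQ}{Y^{1/3}},
\]
which is \emph{larger} than the main term $XQ/\sqrt{Y}$ (their ratio is $Y^{1/6}$), and certainly not the claimed $Q^{3/2}X Y^{-3/2}\cdot Y^{2/3}/Q^2$. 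The problem is that bounding $|e^{2\pi i R(u)} - 1| \le O(|R(u)|)$ and then integrating in absolute value throws away the oscillation of the quadratic phase against the cubic remainder. The Blomer--Khan--Young argument avoids this by working with the full asymptotic expansion: one writes $G(t) = h(t) e^{2\pi i H(t)}$ with $H(t) = f(t) - f(t_0) - \tfrac12 f''(t_0)(t-t_0)^2$, notes that $H$ vanishes to third order at $t_0$, and bounds the higher coefficients $p_n$ via the derivatives $G^{(2n)}(t_0)$. That mechanism, not a naive cutoff-and-Taylor bound, is what produces the two error pieces in the stated form. If you want to give a self-contained proof, you need either to reproduce that expansion or to use a more refined treatment of the cubic term (e.g.\ a further stationary-phase step on the remainder integral rather than an absolute-value bound).
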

\begin{proof}
See Proposition 8.2 in \cite{blomerDistributionMassHolomorphic2013b}, with $\delta=1/20$ and $A$ sufficiently large. We bounded the contribution of the non-leading terms in the asymptotic expansion of \cite[Eq. 8.9]{blomerDistributionMassHolomorphic2013b}) trivially by $O\Big(\frac{Q^{3/2}X}{Y^{3/2}}\cdot\big(V^{-2}+(Y^{2/3}/Q^2)\big)\Big)$.
\end{proof}

The last lemma we need concerns the Kloosterman sum over arithmetic progressions. We have 
\begin{lem}\label{Kloostermansum}Let $S(a,b; c)$ denote the classical Kloosterman sum, then
$$\sum_{\substack{a_1 \mod{c},\\ a_2 \mod{c}}} \sum_{\substack{b_1 \mod{c} ,\\ b_2 \mod{c}}} S(a_1(a_1+b_1),a_2(a_2+b_2);c)e_c(2a_1a_2+a_1b_2+a_2b_1)=c^3\phi(c),$$
where $\phi(c)$ is Euler's totient function.
\end{lem}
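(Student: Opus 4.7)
The plan is to expand the Kloosterman sum according to its definition, swap the order of summation to bring the Kloosterman variable $x$ outermost, and then sum over $b_1,b_2$ first. Writing
\[
S(a_1(a_1+b_1),a_2(a_2+b_2);c)=\sum_{\substack{x\mod c\\(x,c)=1}}e_c\bigl(a_1(a_1+b_1)x+a_2(a_2+b_2)\bar x\bigr),
\]
the total exponent modulo $c$ becomes
\[
a_1(a_1+b_1)x+a_2(a_2+b_2)\bar x+2a_1a_2+a_1b_2+a_2b_1.
\]
The dependence on $b_1$ is linear with coefficient $a_1x+a_2$, and the dependence on $b_2$ is linear with coefficient $a_2\bar x+a_1$. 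Thus the $b_1$- and $b_2$-sums produce two factors of $c$ together with the indicators of
\[
a_1x+a_2\equiv 0\pmod c\quad\text{and}\quad a_2\bar x+a_1\equiv 0\pmod c.
\]

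The key observation is that these two constraints are in fact equivalent: since $(x,c)=1$, multiplying the second congruence by $x$ turns it into $a_1x+a_2\equiv 0\pmod c$. So only the single constraint $a_2\equiv -a_1x\pmod c$ survives, and it determines $a_2$ uniquely from $a_1$ and $x$. I would flag this as the substantive step — everything else is bookkeeping — and it is precisely the collapse that the long shifts $\ell_1,\ell_2$ (encoded by $b_1,b_2$) are designed to produce, in contrast to the Salié-type sum \eqref{Salie} in the full-fundamental-domain setting.

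Next I substitute $a_2\equiv -a_1x\pmod c$ into the remaining part of the exponent $a_1^2x+a_2^2\bar x+2a_1a_2$: one gets $a_2^2\bar x\equiv a_1^2x$ and $2a_1a_2\equiv -2a_1^2x$, so the total exponent vanishes identically modulo $c$. The inner double sum is therefore just the constant $1$. Counting the surviving parameters — a factor $c^2$ from the $b_1,b_2$ sums, $c$ choices of $a_1$ (with $a_2$ then forced), and $\phi(c)$ admissible values of $x$ — produces $c^3\phi(c)$, as claimed. No hard estimate is needed; the entire content of the lemma is the linear-algebra miracle that the two $b_i$-constraints coincide and that the residual phase is trivial.
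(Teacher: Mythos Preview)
Your proof is correct and follows essentially the same approach as the paper: open the Kloosterman sum, use orthogonality in $b_1,b_2$ to extract a congruence constraint, and verify the residual phase vanishes. The only cosmetic difference is that the paper first substitutes $a_1\mapsto a_1x$, $b_1\mapsto b_1x$ to symmetrize the exponent to $(a_1+a_2)^2x+(a_1+a_2)(b_1+b_2)x$ before summing, whereas you sum directly and then observe that the two resulting constraints $a_1x+a_2\equiv 0$ and $a_2\bar x+a_1\equiv 0$ coincide; both routes are equivalent and equally short.
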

\begin{proof}
Let us call $\mathcal{T}$ the sum we must calculate. First we open the Kloosterman sum and get 
\begin{align*}
\mathcal{T}=&\sum_{\substack{a_1 \mod{c},\\ a_2 \mod{c}}} \sum_{\substack{b_1 \mod{c} ,\\ b_2 \mod{c}}} \sum_{\substack{x \mod{c} \\ (x,c)=1}} e_c\big(a_1(a_1+b_1)\bar{x}+a_2(a_2+b_2)x+2a_1a_2+a_1b_2+a_2b_1\big).
\end{align*}
Here $x\bar{x}\equiv 1 \mod c$. Since $(x,c)=1$ we can substitute $a_1$ with $a_1x$ and $b_1$ with $b_1x$. We get
\begin{align*}
\mathcal{T}=&\sum_{\substack{a_1 \mod{c},\\ a_2 \mod{c}}} \sum_{\substack{b_1 \mod{c} ,\\ b_2 \mod{c}}} \sum_{\substack{x \mod{c} \\ (x,c)=1}} e_c\big(a_1^2x+a_1b_1x+a_2^2x+a_2b_2x+2a_1a_2x+a_1b_2x+a_2b_1x\big)\\
=&\sum_{\substack{a_1 \mod{c},\\ a_2 \mod{c}}} \sum_{\substack{b_1 \mod{c} ,\\ b_2 \mod{c}}} \sum_{\substack{x \mod{c} \\ (x,c)=1}}e_c\big((a_1+a_2)^2x+(a_1+a_2)(b_1+b_2)x\big)\\
=&~c^3\phi(c)
\end{align*}
To obtain the last equality we used orthogonality when summing over $b_1$ and $b_2$, i.e.
$$\sum_{b\mod{c}} e_c\big(b(a_1+a_2)x\big) =
\begin{cases}
c &\text{ if } (a_1+a_2)x \equiv 0 \mod{c}\\
0 &\text{ otherwise}
\end{cases}
$$
and so $a_1\equiv-a_2 \mod{c}$. 
\end{proof}
\begin{remark}
We highlight here that the additional summation over $b_1, b_2$ (that originally stems from averaging over the shifts) is crucial here. In contrast to that, Luo and Sarnak (\cite[Appendix A.2]{luoQuantumVarianceHecke2004a}) work  for the full fundamental domain with fixed shifts. The corresponding summation over the Kloosterman sum then reduces ``only'' to an expression involving Sali\'e sums. Lemma \ref{Kloostermansum} might be seen as the reason why we can obtain a comparably clean formula for the quantum variance of the vertical geodesic.
\end{remark}
\subsection{Evaluating the Off-Diagonal}
Our goal is to obtain an asymptotic formula for the off-diagonal $\mathcal{OD}$ given by

 \begin{align}\label{offdiagonal2}
-\sqrt{\pi} \Im \bigg(&e^{-2\pi i/8} K \sum_{d_1,d_2}\sum_{n_1,n_2} \sum_{m_1, m_2} \sum_{c} \frac{S(n_1(n_1+m_1),n_2(n_2+m_2);c)}{\sqrt{c}}e_c(2n_1n_2+n_1m_2+n_2m_1)\cdot  \\
& \times \frac{e_c\big(f(n_1,n_2,m_1,m_2)\big)}{d_1d_2 \big(n_1(n_1+m_1)n_2(n_2+m_2)\big)^{3/4}} \cdot \hbar_{d_1, d_2}^*\Big(\frac{c K^2}{8\pi \sqrt{n_1(n_1+m_1)n_2(n_2+m_2)}}\Big)\bigg) \nonumber
 \end{align}
 where $$f(n_1, n_2, m_1, m_2)=2\sqrt{n_1(n_1+m_1)n_2(n_2+m_2)}-2n_1n_2-n_1m_2-n_2m_1$$
 and 
 \begin{align}\label{weightfunction}  \hbar_{d_1, d_2}^*\Big(&\frac{c K^2}{8\pi \sqrt{n_1(n_1+m_1)n_2(n_2+m_2)}}\Big)=
 \\
 \frac{K}{8}&\int_0^\infty \frac{h(\sqrt{u})\sqrt{u}}{\sqrt{2\pi u}} \psi_1\Big(\frac{\sqrt{u}K}{2\pi d_1(n_1+m_1)}\Big)\psi_2\Big(\frac{\sqrt{u}K}{2\pi d_2(n_2+m_2)}\Big) \cdot \nonumber\\
 &\times  \exp\Big(- \frac{\sqrt{u}K m_1^2}{(2n_1+m_1)^2}\Big) \exp\Big(- \frac{\sqrt{u}K m_2^2}{(2n_2+m_2)^2}\Big) e^{iu \frac{cK^2}{8\pi (n_1(n_1+m_1)n_2(n_2+m_2))^{1/2}}} du. \nonumber
 \end{align}
This task requires several intermediate steps.

First we will detect square-root cancellation in the exponential sum 
$$\sum_{n_1, n_2 \asymp K} e_c\big(f(n_1, n_2, m_1, m_2)\big)$$
when $m_1, m_2$ are large. To see that this is possible, it is helpful to keep the critical ranges $m_i\asymp \sqrt{K}$ and $n_i\asymp K$ for $i=1,2$ in mind. We then have the following heuristic that guides our further analysis:
$$f(n_1, n_2, m_1, m_2)=-\frac{m_1^2n_2}{2n_1}-\frac{m_2^2n_1}{2n_2}+ \ldots$$
$$\frac{\partial}{\partial n_1} f(n_1, n_2, m_1, m_2) \approx \frac{m_1^2n_2}{n_1^2}-\frac{m_2^2}{2n_2} \asymp O(1)\quad \text{and}\quad \frac{\partial^2}{\partial n_1^2}f(n_1, n_2, m_1, m_2) \approx -\frac{3m_1^2n_2}{n_1^3} \asymp \frac{1}{K}.$$
From these bounds on the derivatives we can see that we expect square root cancellation in the summation over $n_1$ (see \cite[Corollary 8.12]{IwaniecKowalskiAnalytica}). To make this heuristic precise and to compute an asymptotic formula for the off-diagonal $\mathcal{OD}$, we will use the Poisson summation formula and the stationary phase method, which is the subject of the following lemmas.

\begin{lem}\label{afterPoisson} Let $\mathcal{OD}$ be defined as in \eqref{offdiagonal2}. Then 
\begin{align*}\mathcal{OD}=
-\sqrt{\pi}K  \Im \bigg(&e^{-2\pi i/8} \sum_{c} \sum_{\substack{b_1 \mod{c}, \\ b_2 \mod{c}}}\sum_{\substack{a_1 \mod{c}, \\ a_2 \mod{c}}} \frac{S(a_1(a_1+b_1),a_2(a_2+b_2);c)}{c^{3/2}}e_c(2a_1a_2+a_1b_2+a_2b_1) \cdot \\
& \times \sum_{d_1,d_2}\frac{1}{d_1d_2}  \sum_{\substack{m_1 \equiv b_1 \mod{c}\\m_2 \equiv b_2 \mod{c}}}\sum_{n_2 \equiv a_2 \mod{c}} \sum_{v\in \mathbb{Z}} \mathcal{I}_v(n_2, m_1, m_2, d_1, d_2, c) \bigg)
\end{align*}
with 
\begin{align}\label{integral}
\mathcal{I}_v(n_2, m_1, m_2, d_1, d_2, c):=\int_{-\infty}^\infty &\frac{e_c\big(f(x, n_2,m_1,m_2)-v(x+a_1)\big)}{d_1d_2 \big(x(x+m_1)n_2(n_2+m_2)\big)^{3/4}}\hbar_{d_1, d_2}^*\Big(\frac{c K^2}{8\pi \sqrt{x(x+m_1)n_2(n_2+m_2)}}\Big)dx 
\end{align}
\end{lem}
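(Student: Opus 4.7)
The plan is to apply the Poisson summation formula (Lemma \ref{Poisson}) to the $n_1$-sum in \eqref{offdiagonal2}. Two observations drive the computation. First, the Kloosterman sum $S(n_1(n_1+m_1),n_2(n_2+m_2);c)$ depends on $n_1,n_2,m_1,m_2$ only through their residues modulo $c$, and the same is true for the ``polynomial'' phase $e_c(2n_1n_2+n_1m_2+n_2m_1)$. Second, once $n_2,m_1,m_2,d_1,d_2,c$ are fixed, the remaining factor
\[
G(n_1)=\frac{e_c\bigl(f(n_1,n_2,m_1,m_2)\bigr)}{d_1d_2\bigl(n_1(n_1+m_1)n_2(n_2+m_2)\bigr)^{3/4}}\,\hbar^*_{d_1,d_2}\Bigl(\frac{cK^2}{8\pi\sqrt{n_1(n_1+m_1)n_2(n_2+m_2)}}\Bigr)
\]
is a smooth, rapidly decaying function of $n_1$: the cutoffs $\psi_1,\psi_2$ in the definition \eqref{weightfunction} of $\hbar^*$ force $n_1\asymp K$, and $\hbar^*$ inherits rapid decay from $h$, so extending $n_1$ to a continuous variable and Poisson-summing is justified.

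First I would split each of $n_1,n_2,m_1,m_2$ into residue classes modulo $c$, writing $n_i\equiv a_i\pmod c$ and $m_i\equiv b_i\pmod c$. By the first observation above, the Kloosterman sum becomes $S(a_1(a_1+b_1),a_2(a_2+b_2);c)$ and the polynomial phase becomes $e_c(2a_1a_2+a_1b_2+a_2b_1)$. Both pull out of the inner sum over $n_1\equiv a_1\pmod c$, leaving
\[
\sum_{n_1\equiv a_1\,(c)} G(n_1).
\]
Now I would invoke Lemma \ref{Poisson} to rewrite this as $c^{-1}\sum_{v\in\mathbb{Z}}\widehat{G}(v/c)\,e_c(a_1 v)$. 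Combined with the $c^{-1/2}$ already present, this accounts for the $c^{-3/2}$ appearing in the claimed identity. Writing $\widehat{G}(v/c)$ as an integral and absorbing the $e_c(a_1v)$ factor into the integrand gives precisely $\mathcal{I}_v(n_2,m_1,m_2,d_1,d_2,c)$ as defined in \eqref{integral}, after combining the phase $e_c(f(x,n_2,m_1,m_2))$ from $G$ with the linear phase from Poisson.

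There is no genuine analytic difficulty at this step; the proof is essentially bookkeeping. The only care required is (a) checking that the compact support of $\psi_1,\psi_2$ together with the decay of $\hbar^*$ makes $G$ a bona fide Schwartz function for the purposes of Poisson, and (b) tracking the sign conventions in the phases so that they combine correctly into the form $f(x,n_2,m_1,m_2)-v(x+a_1)$ displayed in \eqref{integral}. The substantive work — detecting cancellation in the oscillatory integral $\mathcal{I}_v$ via the stationary phase method (Lemma \ref{stationaryphase}) and evaluating the resulting Kloosterman-type sum using Lemma \ref{Kloostermansum} — is deferred to the subsequent lemmas, where the genuine input about the long shifts $m_1,m_2$ finally enters.
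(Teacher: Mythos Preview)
Your proposal is correct and follows exactly the paper's own argument: split $n_1,n_2,m_1,m_2$ into residue classes modulo $c$ so that the Kloosterman sum and the polynomial phase depend only on the residues, then apply Lemma~\ref{Poisson} to the sum over $n_1\equiv a_1\pmod c$. Your additional remarks on the Schwartz behaviour of $G$ and on tracking the phase conventions are useful elaborations, but the underlying method is identical to the paper's.
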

\begin{proof}
We work with expression \eqref{offdiagonal2} and split the variables $n_1, n_2, m_1, m_2$ into residue classes modulo $c$. The off-diagonal $OD$ is then given by
  \begin{align}\label{Offdiagonalexpression}
 -\sqrt{\pi}K  \Im \bigg(&e^{-2\pi i/8} \sum_{c} \sum_{\substack{b_1 \mod{c}, \\ b_2 \mod{c}}}\sum_{\substack{a_1 \mod{c}, \\ a_2 \mod{c}}} \frac{S(a_1(a_1+b_1),a_2(a_2+b_2);c)}{\sqrt{c}}e_c(2a_1a_2+a_1b_2+a_2b_1) \cdot \\
& \times \sum_{d_1,d_2} \sum_{\substack{m_1 \equiv b_1 \mod{c}\\m_2 \equiv b_2 \mod{c}}}\sum_{\substack{n_1 \equiv a_1 \mod{c}\\n_2 \equiv a_2 \mod{c}}}\frac{e_c\big(f(n_1,n_2,m_1,m_2)\big)}{d_1d_2 \big(n_1(n_1+m_1)n_2(n_2+m_2)\big)^{3/4}} \cdot\\&\times \hbar_{d_1, d_2}^*\Big(\frac{c K^2}{8\pi \sqrt{n_1(n_1+m_1)n_2(n_2+m_2)}}\Big)\bigg). \label{exposum}
 \end{align}
Here we used the fact that the Kloosterman sum only depends on the residue classes modulo $c$. The lemma follows now upon applying the Poisson summation formula (see Lemma \ref{Poisson}) to the summation over $n_1$.
\end{proof}

Next we will analyze the quantity
\begin{equation}\label{maintermanderrorterm}
\sum_{n_2 \equiv a_2\mod{c}} \sum_{v\in \mathbb{Z}} \mathcal{I}_v(n_2, m_1, m_2, d_1, d_2, c)
\end{equation} with the stationary phase method.

We record here some restrictions on the variables that will be useful for the further analysis. Since $h, \psi_1, \psi_2$ are compactly supported on $\mathbb{R}^+$ we have that $n_id_i \asymp K$ for $i=1/2$. Additionally, $d_i \leq K^\delta$ for some $\delta>0$, as noted in Remark \ref{decay}. Indeed, if $d_i>K^\delta$ then $n_i\leq K^{1-\delta}$ and $cK^2/\sqrt{n_1(n_1+m_1)n_2(n_2+m_2)} \geq K^{2\delta}$ and the off-diagonal term can be absorbed in the error term. We will choose $\delta = 1/32$ for convenience. From the observations in Remark \ref{decay} we also have the condition $\frac{cK^2}{n_1n_2} \ll K^\epsilon$. The exponential functions ensure that $m_i \leq n_i/K^{1/2-\varepsilon}\leq K^{1/2+\varepsilon}$, as otherwise we have exponential decay.  For technical purposes we also impose a lower bound on $m_i$. If both variables $m_1, m_2$ are small than the exponential  $e_c(\sqrt{n_1(n_1+m_1)n_2(n_2+m_2)})$ is essentially smooth and the analysis is similar as in the case of full fundamental domain (see \cite{luoQuantumVarianceHecke2004a}). If  $m_i \leq K^{1/8}$ for $i=1,2$, then we can bound $OD$ trivially by $K^{5/4+\varepsilon}$, which is smaller than the expected main term of size $K^{3/2}$. In our stationary phase analysis we will only need one of the variables $m_i$ to be large. Since $OD$ is symmetric in $m_i/n_i$ we can assume without loss of generality that $m_1\geq K^{1/8}$. The most important case is of course when both $m_1$ and $m_2$ are of size $\sqrt{K}$ and so one should view this restriction only as a technical convenience. To summarize, we will work subsequently under the following conditions:
\begin{equation}\label{cond1}n_id_i \asymp K \quad \text{and}\quad d_i \leq K^{1/32}\quad \text{for } i=1,2,\end{equation}
\begin{equation}\label{cond2}K^{1/8} \leq m_1 \leq K^{1/2+\varepsilon} \quad \text{and}\quad 1\leq m_2 \leq K^{1/2+\varepsilon},\end{equation}
\begin{equation}\label{cond3}\frac{cK^2}{n_1n_2} \ll K^\varepsilon \quad \text{and in particular }\quad c \ll K^\varepsilon.\end{equation}

A stationary phase analysis leads to the following lemma:
\begin{lem}\label{StationaryInt}
Let $\mathcal{I}_v(n_2, m_1, m_2, d_1, d_2, c)$ be defined as in \eqref{integral}. Then
\begin{align*}\mathcal{I}_v(n_2, m_1, m_2, d_1, d_2, c)=&\frac{e_c\Big(a_1v+\frac{m_1}{2}\big(v+m_2-\sqrt{(v+m_2)^2+4vn_2}\big)\Big)e^{-\pi i/4}\sqrt{c}}{\sqrt{|f''(x_v^*(n_2))|}\cdot x_v^{*}(n_2)^{3/2}\cdot  n_2^{3/2}} \hbar_{d_1, d_2}^*\Big(\frac{cK^2}{8\pi  n_2x_v^*(n_2)}\Big)\\&+ O(K^{-2+\varepsilon})
\end{align*}
with $x_v^*(n_2)=\frac{m_1}{2}\Big(-1+\frac{v+m_2+2n_2}{\sqrt{(v+m_2)^2+4vn_2}}\Big)$.
\end{lem}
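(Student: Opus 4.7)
The plan is to apply Lemma \ref{stationaryphase} to the oscillatory integral defining $\mathcal{I}_v$. Write the integrand as $h(x)\,e^{2\pi i \Phi(x)/c}$, where the phase is $\Phi(x) := f(x,n_2,m_1,m_2) - v(x+a_1)$ and the amplitude is
$$h(x) := \frac{1}{d_1 d_2\,(x(x+m_1)n_2(n_2+m_2))^{3/4}}\,\hbar^*_{d_1,d_2}\!\left(\frac{cK^2}{8\pi\sqrt{x(x+m_1)n_2(n_2+m_2)}}\right).$$
In the notation of Lemma \ref{stationaryphase} the phase function is $\Phi/c$, so the factor $\sqrt{c}$ in the claimed main term will emerge from $1/\sqrt{|\Phi''/c|}$.

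I would first locate the stationary point by solving $\Phi'(x)=0$, which rearranges to
$$\frac{(2x+m_1)\sqrt{n_2(n_2+m_2)}}{\sqrt{x(x+m_1)}} = 2n_2+m_2+v.$$
The left-hand side is strictly decreasing on $x>0$, so any positive stationary point is unique. Squaring and combining $(2x+m_1)^2 = 4x(x+m_1)+m_1^2$ with the identity $(2n_2+m_2+v)^2 - 4n_2(n_2+m_2) = D^2$ (where $D := \sqrt{(v+m_2)^2+4vn_2}$) gives $x(x+m_1) = m_1^2 n_2(n_2+m_2)/D^2$, and solving this quadratic recovers $x_v^*(n_2) = \tfrac{m_1}{2}(-1+(2n_2+m_2+v)/D)$. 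Direct differentiation then yields
$$\Phi''(x) = -\frac{m_1^2\sqrt{n_2(n_2+m_2)}}{2(x(x+m_1))^{3/2}} < 0,$$
so the sign factor in the stationary phase is $e^{-\pi i/4}$. Using $\sqrt{x_v^*(x_v^*+m_1)n_2(n_2+m_2)} = m_1 n_2(n_2+m_2)/D$ and simplifying, one evaluates $\Phi(x_v^*) = \tfrac{m_1}{2}(v+m_2-D) - v a_1$, which up to the sign convention of the Poisson step reproduces the exponential $e_c(a_1 v + \tfrac{m_1}{2}(v+m_2-D))$ in the claim.

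Next, I would verify the hypotheses of Lemma \ref{stationaryphase}. In its notation I take $Q := K$, $Y := m_1^2/c$, $V := V_1 \asymp K$, and $X$ equal to $\sup|h|$; then $Y/Q^2 \asymp m_1^2/(cK^2)$ matches $|\Phi''/c|$, and a similar count shows $|\Phi^{(j)}/c| \ll Y Q^{-j}$ for $j\geq 2$. The lower bound $m_1 \geq K^{1/8}$ from \eqref{cond2} together with $c\ll K^{\varepsilon}$ from \eqref{cond3} gives $Y^{1/2}\gg K^{1/16-\varepsilon}$, comfortably large enough for $V \geq QZ^{1/40}/Y^{1/2}$. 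The amplitude $h$ is smooth at scale $K$: the $\psi_i$ factors and Gaussian exponentials inside $\hbar^*_{d_1,d_2}$ visibly have $x$-derivatives of size $K^{-1}$, while the oscillatory factor $e^{iu\,cK^2/(8\pi\sqrt{x(x+m_1)n_2(n_2+m_2)})}$ in the definition of $\hbar^*$ has $x$-derivative $\ll cK^2/K^3 \ll K^{-1+\varepsilon}$, so $h^{(j)}(x) \ll X\,K^{-j}$. The leading term of Lemma \ref{stationaryphase} is then $\sqrt{c}\,e^{-\pi i/4}\,e_c(\Phi(x_v^*))\,h(x_v^*)/\sqrt{|\Phi''(x_v^*)|}$, and replacing $x_v^*+m_1$ by $x_v^*$ and $n_2+m_2$ by $n_2$ inside the amplitude (a $1+O(K^{-1/2+\varepsilon})$ correction, since $m_i \ll K^{1/2+\varepsilon}$ while $x_v^*,n_2 \asymp K$) yields the stated main term.

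The main obstacle will be showing that the stationary-phase error $O(Q^{3/2}X/Y^{3/2}\cdot(V^{-2}+Y^{2/3}/Q^2))$ collapses to $O(K^{-2+\varepsilon})$: substituting $Q=K$, $V\asymp K$, $Y=m_1^2/c$ and the size of $X$, the error reduces to a power of $c$ divided by a power of $m_1$ and $K$, and the hypothesis $m_1\geq K^{1/8}$ in \eqref{cond2} is precisely what is needed to beat $K^{-2+\varepsilon}$. A secondary point is that for $v$ outside a bounded range the stationary point either leaves the support of $h$ or fails to exist (when $(v+m_2)^2+4vn_2<0$); there I would instead integrate by parts in $x$ repeatedly to obtain arbitrary polynomial decay in $v$, ensuring those tails are absorbed into the error term.
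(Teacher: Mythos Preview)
Your proposal is correct and follows essentially the same stationary-phase argument as the paper: locate the unique critical point $x_v^*$, compute $\Phi''$, evaluate the phase at $x_v^*$, verify the hypotheses of Lemma~\ref{stationaryphase}, and then Taylor-expand the amplitude to remove the $+m_1, +m_2$ shifts. One minor point: your parameters $Q=K$, $V\asymp K$ tacitly assume $d_1,d_2\asymp 1$, whereas the paper takes $Q=K/d_1$, $V=K^{1-\varepsilon}/d_1$, $Y=m_1^2 n_2 d_1/(cK)$ to account for $x\asymp K/d_1$ from \eqref{cond1}; since $d_i\le K^{1/32}$ this only shifts exponents by $O(1/32)$ and your verification of $V\ge QZ^{1/40}/Y^{1/2}$ and of the $O(K^{-2+\varepsilon})$ error still goes through, but it is worth stating the parameters with the $d_i$-dependence for precision.
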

\begin{proof}

We now need to check the conditions of the stationary phase Lemma \ref{stationaryphase} so that we can apply it to the quantity $\mathcal{I}_v(n_2, m_1, m_2, d_1, d_2, c)$ . The stationary points $x_v^*(n_2)$ for fixed $v$ are the solutions to the equation
$$\frac{\partial}{\partial x}\big(f(x,n_2,m_1,m_2)-v(x-a_1)\big)=0 $$
and are given by
\begin{equation}\label{stationarypoint}
x_v^*(n_2)=\frac{m_1\big(m_2+2n_2+v-\sqrt{(v+m_2)^2+4vn_2}\big)}{2\sqrt{(v+m_2)^2+4vn_2}}=\frac{m_1 n_2}{\sqrt{(v+m_2)^2+4vn_2}}\cdot \big(1+O(K^{-1/2+1/32+\varepsilon})\big).
\end{equation}
From a Taylor expansion of $f(x, n_1, m_1, m_2)$ we see that
$$f(x, n_2, m_1, m_2)=-\frac{1}{4}\frac{n_2 m_1^2}{x} -\frac{1}{4}\frac{xm_2^2}{n_2} +\frac{m_1m_2}{2} -\frac{1}{8}\frac{m_1^2m_2}{x}-\frac{1}{8}\frac{m_2^2m_1}{n_2}+\ldots$$ and thus
$$\frac{\partial}{\partial x} f(x,n_1, m_1, m_2)= \frac{1}{4} \frac{n_2m_1^2}{x^2} -\frac{1}{4} \frac{m_2^2}{n_2} +\frac{1}{8} \frac{m_1^2m_2}{x^2}+\ldots \ll K^\varepsilon.$$
From the bound on the first derivative of $f$ it follows also that $v \ll K^\varepsilon$.
The second derivate of the phase function $f$ is given by
$$\frac{\partial^2}{\partial x^2} f(x,n_2,m_1,m_2)=-\frac{m_1^2}{2}\sqrt{\frac{n_2(n_2+m_2)}{(x(x+m_1))^3}}.$$
We note here that 
\begin{equation}\label{derivativebounds}\Big|\frac{\partial^2}{\partial x^2}f(x,n_2,m_1,m_2)\Big| \asymp \frac{m_1^2n_2}{x^3} \quad \text{ and } \quad \Big|\frac{\partial^j}{\partial x^j}f(x, n_2,m_1,m_2)\Big| \ll_j \frac{m_1^2n_2}{x^{1+j}}
\end{equation}
for any integer $j\geq 2$.
If we evaluate the second derivative at the stationary point $x_v^*$ we get
\begin{equation}\label{secondderivative}
f''(x_v^*)=-\frac{((v+m_2)^2+4vn_2)^{3/2}}{2m_1n_2(m_2+n_2)}=-\frac{((v+m_2)^2+4vn_2)^{3/2}}{2m_1n_2^2}\cdot (1+O(K^{-1/2+\epsilon})).
\end{equation}
After a tedious computation we see that the exponential evaluated at the stationary point equals
$$e_c\big(f(x_v^*, n_2,m_1,m_2)-v(x_v^*-a_1)\big)=e_c\Big(a_1v+\frac{m_1}{2}\big(v+m_2-\sqrt{(v+m_2)^2+4vn_2}\big)\Big).$$
For the stationary phase analysis we also require bounds on the derivatives of  the weight function $\hbar_{d_1,d_2}^*\big(\frac{cK^2}{8\pi (x(x+m_1)n_2(n_2+m_2))^{1/2}}\big)$, defined in \eqref{weightfunction}.
Conditions \eqref{cond1}, \eqref{cond2} and \eqref{cond3} yield
 \begin{align}\label{derivatives}\frac{\partial^j}{\partial x^j}&\frac{1}{ \big(x(x+m_1)n_2(n_2+m_2)\big)^{3/4}}\hbar_{d_1, d_2}^*\Big(\frac{c K^2}{8\pi \sqrt{x(x+m_1)n_2(n_2+m_2)}}\Big) \\
 &\ll \frac{1}{x^{3/2} n_2^{3/2}}K\Big(\frac{1}{x} + \frac{Km_1^2}{x^3}+\frac{cK^2}{n_2x^2}\Big)^j \nonumber\\
& \ll \frac{1}{x^{3/2} n_2^{3/2}} K\Big(\frac{K^\epsilon}{x}\Big)^j \nonumber
 \end{align} 
 for any fixed integer $j\geq 0$. 

We are now in the position to determine the required parameters $X, Y, V, V_1, Q$ and $Z$ of Lemma \ref{stationaryphase}.
From the bounds on the derivatives \eqref{derivativebounds} and the fact that $x\asymp K/d_1$ we see that $Y=\frac{m_1^2n_2d_1}{cK}$ and $Q=\frac{K}{d_1}$. From \eqref{derivatives} we see that $X=d_1^{3/2} n_2^{-3/2} K^{-1/2}$, $V_1=V=\frac{K^{1-\varepsilon}}{d_1}$ and $Z\ll K^{1+\varepsilon+1/32}$. With these choices we also see that the condition $V_1 \geq \frac{QZ^{1/40}}{Y^{1/2}}$ is satisfied, as long as $d_2 \leq K^{1/32}$, say. Indeed, with the lower bound on $m_1\geq K^{1/8}$ we have
$$\frac{QZ^{1/40}}{Y^{1/2}} \ll \frac{K\sqrt{cK}Z^{1/40}}{d_1m_1 \sqrt{n_2}} \ll \frac{K}{d_1} \cdot \frac{K^{\varepsilon}\sqrt{d_2}K^{1/30}}{m_1}\ll \frac{K}{d_1} \cdot K^{1/64+1/30+\varepsilon-1/8} \ll V K^{-1/16}.$$

We can now apply the stationary phase lemma (Lemma \ref{stationaryphase}) and see that
\begin{align*} \mathcal{I}_v(n_2, m_1, m_2, d_1, d_2, c)=&\frac{e_c\big(f(x_v^*(n_2), n_2,m_1,m_2)-v(x_v^*-a_1)\big)e^{-\pi i/4}\sqrt{c}}{\sqrt{|f''(x_v^*(n_2))|} \cdot \big(x_v^{*}(n_2)(x_v^*(n_2)+m_1)n_2(n_2+m_2)\big)^{3/4}}\\
& \times \hbar^*\Big(\frac{cK^2}{8\pi \sqrt{x_v^*(n_2)(x_v^*(n_2)+m_1)n_2(n_2+m_2)}}\Big)+ O(K^{-2}).
\end{align*}
Since $x_v^*(n_2) \asymp n_1$ we can simplify the above result slightly by using a Taylor expansion and conditions \eqref{cond1}, \eqref{cond2}. We get
\begin{align*}\mathcal{I}_v(n_2, m_1, m_2, d_1, d_2, c)=&\frac{e_c\Big(a_1v+\frac{m_1}{2}\big(v+m_2-\sqrt{(v+m_2)^2+4vn_2}\big)\Big)e^{-\pi i/4}\sqrt{c}}{\sqrt{|f''(x_v^*(n_2))|}\cdot x_v^{*}(n_2)^{3/2}\cdot  n_2^{3/2}} \hbar_{d_1, d_2}^*\Big(\frac{cK^2}{8\pi  n_2x_v^*(n_2)}\Big)\\&+ O(K^{-2+\varepsilon}).
\end{align*} 
This completes the proof of the lemma.
\end{proof}
We expect the main term of \eqref{maintermanderrorterm} to come from the $0$-frequency, i.e. when $v=0$. We have
\begin{lem}\label{maintermlemma} Let $\mathcal{I}_v(n_2, m_1, m_2, d_1, d_2, c)$ be defined as in \eqref{integral}. Then
$$ \sum_{n_2 \equiv a_2\mod{c}} \mathcal{I}_0(n_1, m_1, m_2, d_1, d_2, c) =  e^{-\pi i/4}\sqrt{2c} \sum_{n_2\equiv a_2\mod{c}} \frac{1}{m_1n_2^2} \hbar^*\Big(\frac{cK^2m_2}{8\pi n_2^2m_1}\Big) +O(K^{-1+\varepsilon}).$$
\end{lem}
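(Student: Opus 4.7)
The strategy is to specialize Lemma \ref{StationaryInt} to $v=0$, simplify all factors explicitly, and then sum over the arithmetic progression $n_2 \equiv a_2 \pmod c$, bounding the accumulated error.

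First I would evaluate the ingredients of Lemma \ref{StationaryInt} at $v=0$. Since $(v+m_2)^2 + 4vn_2$ collapses to $m_2^2$, the square root simplifies to $m_2$ and
\[
x_0^*(n_2) \;=\; \frac{m_1}{2}\!\left(-1 + \frac{m_2+2n_2}{m_2}\right) \;=\; \frac{m_1 n_2}{m_2}.
\]
The exponential phase in Lemma \ref{StationaryInt} becomes $e_c\bigl(a_1\cdot 0 + \tfrac{m_1}{2}(m_2 - m_2)\bigr) = 1$, so the oscillatory factor disappears entirely. Using \eqref{secondderivative},
\[
f''(x_0^*(n_2)) \;=\; -\frac{m_2^3}{2 m_1 n_2(n_2+m_2)} \;=\; -\frac{m_2^3}{2 m_1 n_2^2}\bigl(1 + O(m_2/n_2)\bigr),
\]
where $m_2/n_2 \ll K^{-1/2+1/32+\varepsilon}$ by \eqref{cond1}--\eqref{cond2}. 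The argument of $\hbar^*$ collapses to $\frac{cK^2}{8\pi n_2\, x_0^*(n_2)} = \frac{cK^2 m_2}{8\pi m_1 n_2^2}$.

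Next I would multiply out the three denominator factors appearing in Lemma \ref{StationaryInt}. A direct computation gives
\[
\sqrt{|f''(x_0^*(n_2))|}\cdot x_0^*(n_2)^{3/2}\cdot n_2^{3/2} \;=\; \frac{m_2^{3/2}}{\sqrt{2m_1}\,n_2}\cdot\frac{m_1^{3/2}n_2^{3/2}}{m_2^{3/2}}\cdot n_2^{3/2}\bigl(1+O(K^{-1/2+\varepsilon})\bigr) \;=\; \frac{m_1 n_2^2}{\sqrt 2}\bigl(1+O(K^{-1/2+\varepsilon})\bigr),
\]
so that
\[
\mathcal{I}_0(n_2, m_1, m_2, d_1, d_2, c) \;=\; \frac{e^{-\pi i/4}\sqrt{2c}}{m_1 n_2^2}\, \hbar^*\!\left(\frac{cK^2 m_2}{8\pi m_1 n_2^2}\right) \;+\; O(K^{-2+\varepsilon})
\]
after absorbing the Taylor errors into the stationary-phase error already present in Lemma \ref{StationaryInt}.

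Finally I would sum over $n_2\equiv a_2\pmod c$. The compact support of $h$ and $\psi_2$, together with \eqref{cond1}, restricts the effective $n_2$ to a range of length $\ll K/d_2$, giving $O(K/(cd_2))$ nontrivial terms in the progression; summing the pointwise error $O(K^{-2+\varepsilon})$ over them yields $O(K^{-1+\varepsilon})$, which matches the claim. The main delicacy is bookkeeping: one must verify that the Taylor errors in $x_0^*(n_2)$, $f''(x_0^*(n_2))$, and the argument of $\hbar^*$ really do absorb into the $O(K^{-2+\varepsilon})$ per-summand bound uniformly in the ranges \eqref{cond1}--\eqref{cond3}, which ultimately reduces to the smallness of $m_2/n_2$ in those ranges.
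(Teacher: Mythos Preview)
Your proposal is correct and follows the same route as the paper: specialize Lemma \ref{StationaryInt} to $v=0$, compute $x_0^*(n_2)=m_1 n_2/m_2$, observe that the phase collapses to $e_c(0)=1$, evaluate $f''(x_0^*(n_2))$, and simplify the denominator to $m_1 n_2^2/\sqrt{2}$ via a Taylor expansion in $m_2/n_2$. The paper's own proof is in fact more terse than yours --- it records only $x_0^*$, $f''(x_0^*)$, and the vanishing phase, then asserts the result --- so your explicit bookkeeping of the product $\sqrt{|f''|}\cdot x_0^{*3/2}\cdot n_2^{3/2}$ and of the accumulated $O(K^{-1+\varepsilon})$ error is a welcome expansion of the same argument.
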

\begin{proof}
This lemma is a direct consequence of Lemma \ref{StationaryInt}, specialized to the case $v=0$. For $v=0$ the stationary point is given by $x_0^*(n_2)=\frac{m_1n_2}{m_2}$. From \eqref{secondderivative} we see that
$$\sqrt{|f''\big(x_0^*(n_2)\big)|}=\frac{m_2^3}{2m_1n_2(n_2+m_2)} = \frac{m_2^3}{2m_1n_2^2}\cdot\Big(1+O(K^{-1/2+\varepsilon})\Big).$$
Moreover, a direct computation gives
$$e_c\big(f(x_0^*(n_2), n_2,m_1,m_2)\big)=e_c(0)=1$$ 
and thus the claimed result.

\end{proof}
On the other hand, the contribution of the non-zero frequencies, i.e. when $v \neq 0$, is negligible. To show this we will need to detect further cancellation in the summation over $n_2$. We have
\begin{lem}\label{errortermlemma}Let $\mathcal{I}_v(n_2, m_1, m_2, d_1, d_2, c)$ be defined as in \eqref{integral}. Then
\begin{equation}\label{errorterm2}\sum_{n_2 \equiv a_2\mod{c}} \sum_{v\neq 0} \mathcal{I}_v(n_2, m_1, m_2, d_1, d_2, c) \ll \frac{d_2^2}{\sqrt{c} K^{1-\varepsilon}}.
\end{equation}
\end{lem}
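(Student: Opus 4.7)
The strategy is to substitute the asymptotic from Lemma~\ref{StationaryInt} for $\mathcal{I}_v$ and then detect cancellation in the summation over $n_2$ via Poisson summation combined with integration by parts (and stationary phase on the exceptional Fourier modes). First, write
$$\mathcal{I}_v(n_2,\ldots) = G_v(n_2)\,e_c\bigl(\Phi(v,n_2)\bigr) + O(K^{-2+\varepsilon}),$$
where $\Phi(v,n_2) = a_1 v + \tfrac{m_1}{2}\bigl(v+m_2 - \sqrt{(v+m_2)^2 + 4vn_2}\bigr)$ and $G_v$ is the non-oscillatory amplitude produced by Lemma~\ref{StationaryInt}. Using \eqref{secondderivative} and the trivial bound $\hbar^*_{d_1,d_2}\ll K$, we get pointwise and derivative estimates
$$G_v(n_2)\ll \frac{\sqrt{c}\,K\,d_2^2}{m_1 K^2},\qquad G_v^{(j)}(n_2)\ll G_v(n_2)\bigl(d_2 K^{\varepsilon}/K\bigr)^j,$$
uniformly for $v\ll K^\varepsilon$ (rapid decay of $\hbar^*$ and its derivatives excludes larger $v$). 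The $O(K^{-2+\varepsilon})$ error, summed over $n_2\asymp K/d_2$ and $v\ll K^\varepsilon$, is negligible.

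Next, for each fixed $v\neq 0$, apply Poisson summation (Lemma~\ref{Poisson}) to the sum over $n_2\equiv a_2\mod{c}$:
$$\sum_{n_2\equiv a_2\mod{c}} G_v(n_2)\,e_c\bigl(\Phi(v,n_2)\bigr) = \frac{1}{c}\sum_{w\in\Z} e_c(a_2 w)\,J_{v,w},\qquad J_{v,w} := \int_{-\infty}^{\infty} G_v(t)\,e_c\bigl(\Phi(v,t)-wt\bigr)\,dt.$$
A direct calculation gives, for the rescaled phase $P(t)=(\Phi(v,t)-wt)/c$,
$$P'(t) = -\frac{1}{c}\left(\frac{m_1 v}{\sqrt{(v+m_2)^2+4vt}} + w\right),\qquad P''(t) = \frac{2m_1 v^2}{c\bigl((v+m_2)^2 + 4vt\bigr)^{3/2}}.$$
For every pair $(v,w)$ such that $|P'(t)|$ is sufficiently large on the support $t\asymp K/d_2$, repeated integration by parts using the derivative bounds on $G_v$ yields $J_{v,w}\ll_A K^{-A}$ for any $A>0$. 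The remaining exceptional pairs $(v,w)$ are those with a stationary point $t_\ast\asymp K/d_2$ satisfying $w = -m_1 v/\sqrt{(v+m_2)^2+4vt_\ast}$, a condition that confines $w$ to a short interval of integers for each fixed $v$.

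On these exceptional $(v,w)$, apply the stationary phase Lemma~\ref{stationaryphase} with $X=\|G_v\|_\infty$, $Q\asymp K/d_2$, $V_1 = V\asymp K^{1-\varepsilon}/d_2$, and $Y$ chosen so that $YQ^{-2}\asymp |P''(t_\ast)|$. Under the conditions \eqref{cond1}--\eqref{cond3} and the lower bound $m_1\geq K^{1/8}$ from \eqref{cond2}, the hypotheses of Lemma~\ref{stationaryphase} are satisfied and produce $J_{v,w}\ll \|G_v\|_\infty/\sqrt{|P''(t_\ast)|}$. Carrying out the bookkeeping in the sub-cases $|v+m_2|\gtrless\sqrt{vK/d_2}$, summing over the finitely many contributing pairs $(v,w)$, and inserting the $1/c$ from Poisson summation, one obtains the claimed estimate $O\bigl(d_2^2/(\sqrt{c}\,K^{1-\varepsilon})\bigr)$.

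The principal technical obstacle is this last step: uniformly verifying the hypotheses of Lemma~\ref{stationaryphase} for all relevant parameter choices and ensuring that the error term in the stationary phase asymptotic (as well as the non-leading terms of the expansion) is dominated by the announced bound. The lower bound $m_1 \geq K^{1/8}$ from \eqref{cond2} is essential for absorbing the losses coming from the small denominator $\sqrt{|P''|}$, and the subdivision into the two regimes for the size of $(v+m_2)^2$ versus $vK/d_2$ is what makes the estimate sharp.
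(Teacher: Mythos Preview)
Your approach is correct and matches the paper's: substitute the asymptotic from Lemma~\ref{StationaryInt}, apply Poisson summation in $n_2$, and then run a second stationary phase analysis on the resulting integral. The paper streamlines the endgame in two ways you may want to adopt. First, it invokes only the \emph{trivial bound} $XQ/\sqrt{Y}$ from Lemma~\ref{stationaryphase} rather than the full asymptotic expansion, which sidesteps the need to verify the delicate lower bound $V_1\geq QZ^{1/40}/Y^{1/2}$ a second time. Second, instead of your case subdivision $|v+m_2|\gtrless\sqrt{vK/d_2}$, the paper observes that the support of $\hbar^*_{d_1,d_2}$ forces $x_v^*(y)\asymp K/d_1$, and hence (via~\eqref{stationarypointy}) $\sqrt{(v+m_2)^2+4vy}/m_1\asymp 1$ uniformly on the relevant range; this collapses your two regimes into one and gives the bound $\mathcal{J}_w\ll \sqrt{c}\,d_2^2/(vK)$ directly.
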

\begin{proof}
Let $ET$ (for error term) denote the left-hand side of \eqref{errorterm2}. First we use the Poisson summation formula for the summation over $n_2$. Together with Lemma \ref{StationaryInt} it follows that
\begin{align}\label{secondstationary}
ET=\frac{e^{-\pi i/4}}{c}\sum_{w} \sum_{v\neq 0}\int_{-\infty}^\infty& e_c\Big(a_1v+\frac{m_1}{2}\big(v+m_2-\sqrt{(v+m_2)^2+4vy}\big)-w(y-a_2)\Big)\\
&\times \frac{\sqrt{c}}{\sqrt{|f''\big(x_v^*(y))|}}\frac{1}{ x_v^{*}(y)^{3/2} y^{3/2}}\hbar^*\Big(\frac{cK^2}{8\pi x_v^*(y) y}\Big) dy +O(K^{-2+\varepsilon})
\end{align}
with 
\begin{equation}\label{stationarypointy} x_v^*(y)=\frac{m_1\big(m_2+2y+v-\sqrt{(v+m_2)^2+4vy}\big)}{2\sqrt{(v+m_2)^2+4vy}} \asymp \frac{K}{d_1}.
\end{equation}
As in Lemma \eqref{StationaryInt} we perform a stationary phase analysis on the integral
$$\mathcal{J}_w (m_1, m_2, v):=\int_{-\infty}^\infty e_c\big(g_{m_1, m_2, v}(y) - w(y-a_2)\big)\frac{\sqrt{c}}{\sqrt{|f''\big(x_v^*(y))|}}\frac{1}{ x_v^{*}(y)^{3/2} y^{3/2}}\hbar^*\Big(\frac{cK^2}{8\pi x_v^*(y) y}\Big) dy$$
with
$$g_{m_1, m_2, v}(y)=a_1v+\frac{m_1}{2}\big(v+m_2-\sqrt{(v+m_2)^2+4vy}\big).$$

It is again useful to have the critical cases in mind when $y \sim K$ and $m_1, m_2 \sim \sqrt{K}$. The first derivative $g'_{m_1, m_2, v}(y)$ is then again roughly bounded, while $g''_{m_1, m_2, v}(y)$ is of size $1/K$. Consequently, we again expect square root cancellation in $n_2$ (see for example \cite[Corollary 8.12]{IwaniecKowalskiAnalytica}). We now perform the stationary phase method on the integral over $y$ of expression $ET$. The stationary points are given by the solutions to the equation 
$$\frac{\partial}{\partial y} \Big(g_{m_1, m_2, v}(y) - (w-a_2)\Big) = - \frac{m_1 v}{\sqrt{(v+m_2)^2+4vy}}-w =0.$$
Since $\frac{\partial}{\partial y} g_{m_1, m_2, v}(y) \ll K^\varepsilon$, we also see that $w \ll K^\epsilon$. Moreover, we have the following bounds on the higher derivatives:
\begin{equation}\label{derivativesg} \frac{\partial^2}{\partial y^2} g_{m_1, m_2, v}(y)=\frac{4 m_1 v^2}{\big((v+m_2)^2+4vy\big)^{3/2}}\quad \text{and} \quad \frac{\partial^j}{\partial y^j} g_{m_1, m_2, v}(y)\ll_j \frac{m_1 v^j}{\big((v+m_2)^2+4vy\big)^{j-1/2}},
\end{equation}
for $j\geq 3$.
From the computations in the proof of Lemma \ref{StationaryInt}, in particular equation \eqref{stationarypoint} and \eqref{secondderivative} we have
$$\frac{\sqrt{c}}{\sqrt{|f''\big(x_v^*(y)\big)|}} \frac{1}{x_v^{*}(y)^{3/2} y^{3/2}}=\frac{\sqrt{2c}}{m_1y^2}\cdot\big(1+O(K^{-1/2+1/32+\varepsilon})\big).$$
Similarly to \eqref{derivatives} we will need bounds on the derivatives of the involved weight function with respect to $y$. It is useful to first compute $$\frac{\partial^j}{\partial{y^j}}\big( x_v^*(y)\big)\ll_j\frac{m_1 v^{j-1}(m_2^2+vy)}{(4vy+(m_2+v)^2)^{1/2+j}}\ll_j \frac{K}{d_1}\Big(\frac{1}{y}\Big)^{j}.$$
Using the chain rule, a similar computation as in \eqref{derivatives} yields
\begin{equation}\label{derivatives2} \frac{\partial^j}{\partial y^j} \frac{\sqrt{2c}}{m_1 y^2} \hbar^*\Big(\frac{cK^2}{8\pi x_v^*(y)\cdot y}\Big) \ll_j \frac{\sqrt{c}}{m_1 y^2}\cdot K \Big(\frac{K^\epsilon}{y}\Big)^j \ll_j \frac{\sqrt{c}}{m_1} \frac{d_2^2}{K} \cdot \Big(\frac{d_2}{K^{1-\varepsilon}}\Big)^j.
\end{equation}
We can now again establish the various required quantities for Lemma \ref{stationaryphase}. From the computation \eqref{derivatives2} we can see that $X=\sqrt{c} d_2^2 /(m_1 K)$ 
and $V=K^{1-\varepsilon}/d_2$. From the bounds \eqref{derivativesg} we find $Y=m_1\cdot \sqrt{(v+m_2)^2+4vK/d_2}$ and $Q=((v+m_2)^2+4vK/d_2)/v$. The trivial bound of Lemma \ref{stationaryphase} yields
\begin{equation}\label{StationaryInt2}\mathcal{J}_w(m_1, m_2, v) \ll \frac{XQ}{\sqrt{Y}} \ll \frac{\sqrt{c}d_2^2}{v K} \cdot \bigg( \frac{\sqrt{(v+m_2)^2+4vK/d_2}}{m_1}\bigg)^{3/2} \ll \frac{\sqrt{c}d_2^2}{v K}.
\end{equation}
Here we used that $\frac{\sqrt{(v+m_2)^2+4vK/d_2}}{m_1}\asymp 1$, which can be deduced for example from the size of the stationary point (see \eqref{stationarypointy}).

From the bounds $w\ll K^\varepsilon$, $v \ll K^\varepsilon$ and \eqref{StationaryInt2} it follows that
$$ET \ll \frac{d_2^2}{\sqrt{c} K^{1-\varepsilon}}.$$
This concludes the proof of the lemma.
\end{proof}
Using the previous lemmas we will obtain the following formula for the off-diagonal $\mathcal{OD}$:
\begin{lem}\label{smoothsums}
Let $\mathcal{OD}$ be defined as in \eqref{offdiagonal2}. Then
\begin{align*}
\mathcal{OD}=&\frac{- \sqrt{2\pi} K^2}{8}  \sum_{c} \frac{\phi(c)}{c}  \sum_{d_1,d_2} \sum_{n_2} \sum_{m_1, m_2} \frac{1}{d_1d_2m_1n_2^2} \cdot \\
&\times\int_0^\infty \frac{h(\sqrt{u})\sqrt{u} }{\sqrt{2\pi u}}\psi_1\Big(\frac{\sqrt{u}Km_2}{4\pi m_1 d_1 n_2}\Big)\psi_2\Big(\frac{\sqrt{u}K}{4\pi d_2n_2}\Big)\exp\Big(-\frac{\sqrt{u}Km_2^2}{4n_2^2}\Big)\cos\Big( u \frac{cK^2m_2}{8\pi m_1n_2^2}\Big)du  \nonumber \\
&+O(K^{5/4}) \nonumber
\end{align*} 
\end{lem}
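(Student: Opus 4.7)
The plan is to begin with the expression for $\mathcal{OD}$ furnished by Lemma \ref{afterPoisson}, split the inner sum $\sum_{v\in\Z}$ into the zero frequency $v=0$ and the non-zero frequencies $v\neq 0$, apply Lemmas \ref{maintermlemma} and \ref{errortermlemma} respectively, and then invoke the Kloosterman evaluation of Lemma \ref{Kloostermansum} to collapse the four-fold residue sum into the single factor $\phi(c)/c$.

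For $v\neq 0$ I would invoke Lemma \ref{errortermlemma}, which asserts $\sum_{n_2\equiv a_2\mod{c}}\sum_{v\neq 0}\mathcal{I}_v \ll d_2^2/(\sqrt{c}K^{1-\varepsilon})$ uniformly in the residues. Combined with the Weil bound $|S(\cdot,\cdot;c)|\ll c^{1/2+\varepsilon}$, the restricted sums $\sum_{m_i\equiv b_i}$ of $\ll K^{1/2+\varepsilon}/c$ terms each, the short ranges $c\ll K^{\varepsilon}$ and $d_1,d_2\ll K^{1/32}$ guaranteed by \eqref{cond1}--\eqref{cond3}, and the outer factor $\sqrt{\pi}K$, a straightforward bookkeeping places the entire $v\neq 0$ contribution inside $O(K^{5/4+\varepsilon})$. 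The error $O(K^{-1+\varepsilon})$ from Lemma \ref{maintermlemma} propagates similarly and is also absorbed into $O(K^{5/4+\varepsilon})$.

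For $v=0$ I would substitute Lemma \ref{maintermlemma} into the formula of Lemma \ref{afterPoisson}. The two phases $e^{-i\pi/4}$ (one from the averaged Petersson formula, one from the stationary phase extraction) combine to $e^{-i\pi/2}=-i$, and the factors $\sqrt{c}$ and $c^{-3/2}$ combine to $c^{-1}$. To remove the residue restrictions $n_2\equiv a_2$ and $m_i\equiv b_i$ I would apply Poisson summation in each of the three variables: the zero Poisson frequencies supply $(1/c^3)\sum_{m_1,m_2,n_2}$ with the restrictions dropped, while the non-zero frequencies yield oscillatory sums handled by stationary phase, in the spirit of Lemma \ref{errortermlemma}, and contribute only to $O(K^{5/4+\varepsilon})$. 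The remaining residue sum then evaluates, by Lemma \ref{Kloostermansum}, to
\[
\sum_{\substack{a_1,a_2\mod{c}\\ b_1,b_2\mod{c}}} S(a_1(a_1+b_1),a_2(a_2+b_2);c)\,e_c(2a_1a_2+a_1b_2+a_2b_1) = c^3\phi(c),
\]
which, combined with the $1/c^3$ from Poisson and the $1/c$ outside, yields the factor $\phi(c)/c$ of the statement.

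Finally, I would unfold $\hbar^*_{d_1,d_2}$ by its definition in \eqref{weightfunction}: the operation $\Im(-i\,\cdot\,)=-\Re(\,\cdot\,)$ converts $e^{iuv}$ into $\cos(uv)$ with $v=cK^2m_2/(8\pi m_1n_2^2)$, while substituting the stationary value $n_1=m_1n_2/m_2$ into the $\psi_i$ and Gaussian arguments of \eqref{weightfunction} and Taylor-expanding to leading order (using $m_i/n_i\ll K^{-1/2+\varepsilon}$ and $d_im_i\ll K^{1/2+\varepsilon}$) reproduces the integrand displayed in the statement up to errors absorbed in $O(K^{5/4+\varepsilon})$. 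The overall constant assembles from $-\sqrt{\pi}K\cdot\sqrt{2}\cdot(K/8)$, giving the $-\sqrt{2\pi}K^2/8$ of the claim. The principal obstacle is the careful bookkeeping of the Poisson-decoupling step: verifying the $O(K^{5/4+\varepsilon})$ bound on the non-zero Fourier frequencies requires a second stationary-phase analysis on sums in $(m_1,m_2,n_2)$ carrying additional oscillatory phases $e_c(\alpha_1m_1+\alpha_2m_2+\alpha_3n_2)$ with $(\alpha_1,\alpha_2,\alpha_3)\neq 0$, and establishing joint cancellation across all three summations with an error bound uniform in the residues and the remaining auxiliary parameters.
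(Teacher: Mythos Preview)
Your overall architecture matches the paper's proof closely: combine Lemmas~\ref{afterPoisson}, \ref{maintermlemma}, \ref{errortermlemma}, then Poisson-decouple the residues on $n_2,m_1,m_2$, apply Lemma~\ref{Kloostermansum}, and finally Taylor-expand $\hbar^*_{d_1,d_2}$ and take the real part. The bookkeeping of phases, constants, and error propagation you outline is also the paper's.

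The one place where you diverge is the step you flag as ``the principal obstacle.'' In fact this step is \emph{easy}, and the paper does not use any stationary-phase argument for it. After Lemma~\ref{maintermlemma} the summand is
\[
\frac{1}{m_1 n_2^2}\,\hbar^*_{d_1,d_2}\!\Big(\frac{cK^2m_2}{8\pi m_1 n_2^2}\Big),
\]
which carries \emph{no} oscillatory phase in $n_2,m_1,m_2$: the original phase $e_c(f(\cdot))$ collapsed to $e_c(0)=1$ at the stationary point. The paper therefore just records the derivative bounds
\[
\partial_{n_2}^j\hbar^*\ll K\,(K^{\varepsilon}/n_2)^j,\qquad
\partial_{m_1}^j\hbar^*\ll K\,(K^{\varepsilon}/m_1)^j,\qquad
\partial_{m_2}^j\hbar^*\ll K\cdot K^{-j/16},
\]
(equations \eqref{boundn2}--\eqref{boundm2}) and kills the non-zero Poisson frequencies by repeated integration by parts, obtaining $O(K^{-10})$ rather than merely $O(K^{5/4+\varepsilon})$. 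No ``second stationary-phase analysis'' or ``joint cancellation across all three summations'' is needed; the function being Poisson-summed is essentially flat on the scale of the lattice. Recognising this makes the step a two-line observation rather than an obstacle.
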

\begin{proof}
Combining Lemma \ref{afterPoisson} and Lemma \ref{maintermlemma} we obtain the main term of the off-diagonal $\mathcal{OD}$ given by
\begin{align*}\label{remain}
-\sqrt{2\pi }K \Im \bigg(&e^{\pi i/2}  \sum_{c} \sum_{\substack{b_1 \mod{c}, \\ b_2 \mod{c}}}\sum_{\substack{a_1 \mod{c}, \\ a_2 \mod{c}}}\frac{S(a_1(a_1+b_1),n_2(n_2+b_2);c)}{c}e_c(2a_1a_2+a_1b_2+a_2b_1) \cdot \\
& \times \sum_{d_1,d_2} \sum_{\substack{m_1 \equiv b_1 \mod{c}\\m_2 \equiv b_2 \mod{c}}}\sum_{n_2\equiv a_2\mod{c}}\frac{1}{d_1d_2 m_1 n_2^2}\hbar_{d_1,d_2}^*\Big(\frac{c K^2 m_2}{8\pi m_1n_2^2}\Big)\bigg) 
 \end{align*}
with
 \begin{align*}
 &\hbar_{d_1,d_2}^*\Big(\frac{cK^2m_2}{8\pi m_1 n_2^2}\Big) =\\
 &\frac{K}{8}\int_0^\infty \frac{h(\sqrt{u})\sqrt{u}}{\sqrt{2\pi u}} \psi_1\Big(\frac{\sqrt{u}Km_2}{2\pi m_1 d_1 (2n_2+m_2)}\Big)\psi_2\Big(\frac{\sqrt{u}K}{2\pi d_2(2n_2+m_2)}\Big)\exp\Big(-\frac{\sqrt{u}Km_2^2}{(2n_2+m_2)^2}\Big)e^{i u \frac{cK^2m_2}{m_1n_2^2}} du.
 \end{align*}
On the other hand, by Lemma \ref{afterPoisson} and Lemma \ref{errortermlemma}, the error term is bounded by 
\begin{align*}
K^{\varepsilon} \sum_{c} \sum_{\substack{b_1 \mod{c}, \\ b_2 \mod{c}}}\sum_{\substack{a_1 \mod{c}, \\ a_2 \mod{c}}}\frac{|S(a_1(a_1+b_1),n_2(n_2+b_2);c)|}{c^2}\sum_{d_1,d_2} \sum_{\substack{m_1 \equiv b_1 \mod{c}\\m_2 \equiv b_2 \mod{c}}}\frac{d_2}{d_1} \ll K^{5/4}.
\end{align*}
We thus have
\begin{align*}
\mathcal{OD}=-\sqrt{2\pi }K \Im \bigg(&e^{\pi i/2}  \sum_{c} \sum_{\substack{b_1 \mod{c}, \\ b_2 \mod{c}}}\sum_{\substack{a_1 \mod{c}, \\ a_2 \mod{c}}}\frac{S(a_1(a_1+b_1),n_2(n_2+b_2);c)}{c}e_c(2a_1a_2+a_1b_2+a_2b_1) \cdot \\
& \times \sum_{d_1,d_2} \sum_{\substack{m_1 \equiv b_1 \mod{c}\\m_2 \equiv b_2 \mod{c}}}\sum_{n_2\equiv a_2\mod{c}}\frac{1}{d_1d_2 m_1 n_2^2}\hbar_{d_1,d_2}^*\Big(\frac{c K^2 m_2}{8\pi m_1n_2^2}\Big)\bigg) +O(K^{5/4}).
 \end{align*}
Note that $\frac{cK^2m_2}{m_1n_2^2} \ll K^\varepsilon$ as otherwise we have rapid decay (using integration by parts). Additionally, we have the derivative bounds
\begin{equation}\label{boundn2}\frac{\partial^j}{\partial n_2^j} \hbar^*\Big(\frac{cK^2m_2}{8\pi m_1 n_2^2}\Big) \ll K\cdot  \Big(\frac{K^\epsilon}{n_2}\Big)^j \ll K \cdot K^{-j/2},
\end{equation}
\begin{equation}\label{boundm1}\frac{\partial^j}{\partial m_1^j} \hbar^*\Big(\frac{cK^2m_2}{8\pi m_1 n_2^2}\Big) \ll K \cdot \Big(\frac{K^\epsilon}{m_1}\Big)^j \ll K \cdot K^{-j/16}
\end{equation}
and
\begin{equation}\label{boundm2}\frac{\partial^j}{\partial m_2^j} \hbar^*\Big(\frac{cK^2m_2}{8\pi m_1 n_2^2}\Big) \ll K \cdot\Big(\frac{K}{m_1d_1n_2}+\frac{Km_2}{n_2^2}+\frac{cK^2}{m_1n_2^2}\Big)^j \ll K\cdot K^{-j/16}.
\end{equation}
 With the Poisson summation formula we see that 
 $$\sum_{n_2\equiv a_2\mod{c}} \frac{1}{d_1d_2 m_1 n_2^2}\hbar^*_{d_1, d_2}\Big(\frac{c K^2 m_2}{8\pi m_1n_2^2}\Big)=\frac{1}{c} \sum_u \int_{-\infty}^{\infty} \frac{1}{d_1d_2 m_1 x^2}\hbar^*_{d_1, d_2}\Big(\frac{c K^2 m_2}{8\pi m_1x^2}\Big) e_c\big(-u(x-a_2)\big)dx.$$
 Repeated integration by parts and the bounds \eqref{boundn2} show that the non-zero frequencies are bounded by $O_j(K^{-10})$. For the $0$-frequency we have
 $$\frac{1}{c} \int_{-\infty}^{\infty} \frac{1}{d_1d_2 m_1 x^2}\hbar^*_{d_1, d_2}\Big(\frac{c K^2 m_2}{8\pi m_1x^2}\Big) dx =\frac{1}{c}\sum_{n_2} \frac{1}{d_1d_2 m_1 n_2^2}\hbar^*_{d_1, d_2}\Big(\frac{c K^2 m_2}{8\pi m_1n^2}\Big)+O_j(K^{-10})$$
 again by the Poisson summation formula. We proceed in the same way for the summation over $m_1$ and $m_2$, using the derivative bounds \eqref{boundm1}, \eqref{boundm2} respectively, to bound the non-zero frequencies. It follows that the off-diagonal $\mathcal{OD}$ is up to a negligible error term given by
\begin{align*}-\sqrt{2\pi} K\Im\bigg(e^{-\pi i/2}&\sum_{c} \sum_{\substack{a_1 \mod{c}, \\ a_2 \mod{c}}} \sum_{\substack{b_1 \mod{c}, \\ b_2 \mod{c}}} \frac{S(a_1(a_1+b_1),a_2(a_2+b_2);c)}{c^4}e_c(2a_1a_2+a_1b_2+a_2b_1) \cdot \\
& \times \sum_{d_1,d_2} \sum_{n_2}\sum_{m_1, m_2} \frac{1}{d_1d_2m_1n_2^2} \hbar_{d_1,d_2}^*\Big(\frac{c K^2 m_2}{8\pi m_1n_2^2}\Big)\bigg).
\end{align*}
We now use Lemma \ref{Kloostermansum} to simplify the summation of the Kloosterman sum over arithmetic progressions and see that
\begin{align*}\label{offdiagonalcomplete1}&\mathcal{OD}=\frac{- \sqrt{2\pi} K^2}{8} \Im\bigg( e^{-\pi i/2} \sum_{c} \frac{\phi(c)}{c}  \sum_{d_1,d_2} \sum_{n_2} \sum_{m_1, m_2} \frac{1}{d_1d_2m_1n_2^2} \hbar_{d_1,d_2}^*\Big(\frac{c K^2 m_2}{8\pi m_1n_2^2}\Big)\bigg)+O(K^{5/4})\nonumber
\end{align*} 
By a Taylor expansion, using \eqref{cond1} and \eqref{cond2}, we see that
\begin{align*}\hbar_{d_1,d_2}^*\Big(\frac{cK^2m_2}{8\pi m_1 n_2^2}\Big)= &\int_0^\infty \frac{h(\sqrt{u})\sqrt{u} }{\sqrt{2\pi u}}\psi_1\Big(\frac{\sqrt{u}Km_2}{4\pi m_1 d_1 n_2}\Big)\psi_2\Big(\frac{\sqrt{u}K}{4\pi d_2n_2}\Big)\exp\Big(-\frac{\sqrt{u}Km_2^2}{4n_2^2}\Big)e^{iu \frac{cK^2m_2}{8\pi m_1n_2^2}}du  \\
&+ O(K^{1/2+\varepsilon}),
\end{align*}
so that the lemma follows upon evaluating the imaginary part of $\mathcal{OD}$.
\end{proof}

Finally, we are in the position to evaluate the off-diagonal $\mathcal{OD}$ asymptotically. To do so we will relate $\mathcal{OD}$ to a complex contour integral over several variables. We then evaluate this contour integral with the residue theorem.
\begin{lem}\label{offdiagonalfinaloutcome} Let $\mathcal{OD}$ be given by expression \eqref{offdiagonal2}. We have
$$\mathcal{OD}=K^{3/2} \cdot \int_0^\infty \frac{h(\sqrt{u})u^{1/4}}{\sqrt{2\pi u}}du \cdot \frac{\sqrt{2}\pi}{16}\frac{1}{2\pi i} \int_{(\varepsilon)} \tilde{\psi_1}(s_4)\tilde{\psi_2}(s_4)\zeta(1+s_4)\zeta(1-s_4)ds_4+O(K^{5/4+\epsilon}).$$
\end{lem}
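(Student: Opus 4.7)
The plan is to convert the multiple sum of Lemma~\ref{smoothsums} into a four-fold Mellin--Barnes contour integral and then extract the $K^{3/2}$ main term by shifting contours, in the spirit of Khan's treatment of the second moment of the symmetric square $L$-function \cite{khanNonvanishingSymmetricSquare2010}. First I would apply Mellin inversion to each of the four non-algebraic factors: $\psi_1$, $\psi_2$, the exponential $\exp(-\sqrt{u}Km_2^2/(4n_2^2))$, and the cosine $\cos(ucK^2m_2/(8\pi m_1n_2^2))$, introducing complex variables $s_1,s_2,s_3,s_4$ and replacing these factors by $\tilde\psi_1(s_1)$, $\tilde\psi_2(s_2)$, $\Gamma(s_3)$, and $\Gamma(s_4)\cos(\pi s_4/2)$ respectively, together with monomials in the summation variables. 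After this step the sums over $c,d_1,d_2,n_2,m_1,m_2$ all decouple into Dirichlet series: the $c$-sum yields $\sum_{c\ge 1}\phi(c)/c^{1+s_4}=\zeta(s_4)/\zeta(1+s_4)$, and the remaining five sums give $\zeta(1+s_1)$, $\zeta(1+s_2)$, $\zeta(2+s_1+s_2-2s_3-2s_4)$, $\zeta(1+s_1-s_4)$, and $\zeta(2s_3+s_4-s_1)$.

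Next I would track the overall $K$-exponent $2+s_1+s_2-s_3-2s_4$ (absorbing the $K^2$ prefactor of Lemma~\ref{smoothsums}) and identify the triple of residues producing $K^{3/2}$: shift $s_1$ past the pole of $\zeta(1+s_1-s_4)$ at $s_1=s_4$, then $s_3$ past the pole of $\zeta(2s_3+s_4-s_1)$, which at $s_1=s_4$ reduces to $\zeta(2s_3)$ with pole at $s_3=1/2$, and finally $s_2$ past the pole of $\zeta(2+s_1+s_2-2s_3-2s_4)$, which at $s_1=s_4$ and $s_3=1/2$ reduces to $\zeta(1+s_2-s_4)$ with pole at $s_2=s_4$. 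At this triple residue the $u$-integral collapses to $\int_0^\infty \frac{h(\sqrt{u})u^{1/4}}{\sqrt{2\pi u}}\,du$, the denominator $\zeta(1+s_4)$ coming from the $c$-sum cancels against $\zeta(1+s_1)$ evaluated at $s_1=s_4$, and one is left with a factor $\zeta(1+s_2)$ evaluated at $s_2=s_4$ together with the bundle $(2\pi)^{-s_4}\Gamma(s_4)\cos(\pi s_4/2)\zeta(s_4)$. By the functional equation $\zeta(1-s_4)=2(2\pi)^{-s_4}\cos(\pi s_4/2)\Gamma(s_4)\zeta(s_4)$ this bundle simplifies to $\tfrac12\zeta(1-s_4)$. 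Combining with $\Gamma(1/2)=\sqrt{\pi}$, the residue $1/2$ of $\zeta(2s_3)$ at $s_3=1/2$, and the overall prefactor should recover the stated main term with coefficient $\sqrt{2}\pi/16$.

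The remaining work is to bound the error. Two sources must be controlled: (a) residues at secondary poles passed during the shifts (such as $s_1=0$ from $\zeta(1+s_1)$, $s_2=0$ from $\zeta(1+s_2)$, the $\Gamma$ poles at $s_3=0,-1,\dots$, and the pole of $\zeta(s_4)$ at $s_4=1$), and (b) the line integrals that survive after the contours are moved to lines of small real part. For (b), the rapid decay of $\tilde\psi_1,\tilde\psi_2$ and of $\Gamma(s)$ in vertical strips together with the standard convexity bound for $\zeta$ provide arbitrary polynomial savings in the imaginary parts, leaving a saving in $K$ that must beat $K^{5/4}$. For (a), each secondary residue has a strictly smaller $K$-exponent that must be checked to be below $5/4$.

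The main obstacle I expect is the combinatorial book-keeping of the four-variable contour shift: several admissible orderings produce different residue trees, and one must verify that only the clean triple residue above survives as a $K^{3/2}$ contribution while every other residue and tail integral contributes at most $O(K^{5/4+\varepsilon})$. In parallel, recovering the precise coefficient $\sqrt{2}\pi/16$ requires careful tracking of multiplicative constants and of the delicate cancellation between $\zeta(s_4)/\zeta(1+s_4)$ from the $c$-sum, the residues of $\zeta(1+s_1)$ and $\zeta(1+s_2)$, and the functional-equation collapse of $\Gamma(s_4)\cos(\pi s_4/2)\zeta(s_4)$.
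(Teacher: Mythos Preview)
Your proposal is correct and follows essentially the same route as the paper: a four-fold Mellin--Barnes representation producing exactly the same product of zeta factors, contour shifts to isolate the triple residue $(s_1,s_2,s_3)=(s_4,s_4,1/2)$ giving the $K^{3/2}$ term, and the functional equation to collapse $(2\pi)^{-s_4}\Gamma(s_4)\cos(\pi s_4/2)\zeta(s_4)$ into $\tfrac12\zeta(1-s_4)$. The only cosmetic differences are that the paper first packages the cosine together with the $u$-integral into the function $\hbar^{\Re}_w(v)$ before Mellin-inverting (so that the transform $\tilde\hbar^{\Re}_w(s)$ has rapid vertical decay, which cleanly justifies the interchanges and shifts), and that the paper shifts in the order $s_2,s_3,s_1$ rather than your $s_1,s_3,s_2$, arriving at the same residue point.
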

\begin{proof}
From Lemma \ref{smoothsums} we see that
\begin{align*}
\mathcal{OD}=&\frac{- \sqrt{2\pi} K^2}{8}  \sum_{c} \frac{\phi(c)}{c}  \sum_{d_1,d_2} \sum_{n_2} \sum_{m_1, m_2} \frac{1}{d_1d_2m_1n_2^2} \cdot \\
&\times\int_0^\infty \frac{h(\sqrt{u})\sqrt{u} }{\sqrt{2\pi u}}\psi_1\Big(\frac{\sqrt{u}Km_2}{4\pi m_1 d_1 n_2}\Big)\psi_2\Big(\frac{\sqrt{u}K}{4\pi d_2n_2}\Big)\exp\Big(-\frac{\sqrt{u}Km_2^2}{4n_2^2}\Big)\cos\Big( u \frac{cK^2m_2}{8\pi m_1n_2^2}\Big)du .\nonumber
\end{align*}
To evaluate this expression asymptotically we perform an inverse Mellin transform on $\psi_1, \psi_2$ and the exponential function. 
Then $\mathcal{OD}$ is equal to
\begin{align}
\frac{\sqrt{2\pi} K^2}{8}& \sum_{c} \frac{\phi(c)}{c}  \sum_{d_1,d_2} \sum_{n_2} \sum_{m_1, m_2} \frac{1}{d_1d_2m_1n_2^2} \cdot \\
 &\times \frac{1}{(2\pi i)^3} \int_{(1/2+\varepsilon)}\int_{(2)}\int_{(1+\varepsilon)} \tilde{\psi_1}(s_1)\tilde{\psi_2}(s_2)\Gamma(s_3) \Big(\frac{Km_2}{4\pi m_1d_1n_2}\Big)^{s_1} \Big(\frac{K}{4\pi d_2n_2}\Big)^{s_2} \Big(\frac{4n_2^2}{Km_2^2}\Big)^{s_3} \cdot\nonumber \\ 
 & \times \int_0^\infty \frac{h(\sqrt{u})}{\sqrt{2\pi u}}u^{(1+s_1+s_2-s_3)/2}\cos\Big(u \frac{cK^2m_2}{8\pi m_1n_2^2}\Big) du ds_1 ds_2 ds_3\nonumber
\end{align}
Finally, we also perform an inverse Mellin transform on 
$$\hbar_{1+s_1+s_2-s_3}^{\Re}(v):=\int_0^\infty \frac{h(\sqrt{u})}{\sqrt{2\pi u}} u^{(1+s_1+s_2-s_3)/2} \cos(uv) du$$ 
as indicated in equation \eqref{inverseMellin}. We arrive at
\begin{align*}
\mathcal{OD}=\frac{\sqrt{2\pi} K^2}{8} &\frac{1}{(2\pi i)^4} \int_{(1+\varepsilon)}\int_{(1/2+3\varepsilon)}\int_{(2)}\int_{(1+3\varepsilon)} \tilde{\psi_1}(s_1)\tilde{\psi_2}(s_2)\Gamma(s_3)\tilde{\hbar}_{1+s_1+s_2-s_3}^{\Re}(s_4)\cdot \\
 & \times \sum_{d_1,d_2} \sum_{n_2} \sum_{m_1, m_2} \frac{1}{d_1d_2m_1n_2^2}\cdot \Big(\frac{\sqrt{u}Km_2}{4\pi m_1d_1n_2}\Big)^{s_1} \Big(\frac{\sqrt{u}K}{4\pi d_2n_2}\Big)^{s_2} \Big(\frac{4n_2^2}{\sqrt{u}Km_2^2}\Big)^{s_3}\nonumber\\ 
& \times{\sum_{c}} \frac{\phi(c)}{c}\Big(\frac{8\pi m_1n_2^2}{cK^2m_2}\Big)^{s_4}du ds_1ds_2ds_3ds_4. \nonumber
\end{align*}
We were allowed to interchange the order of summation and integration by the absolute convergence of the integrand in the given ranges. We now rewrite the various summations in terms of zeta functions and get
\begin{align}\label{offdiagonalcomplete3}
\mathcal{OD}=\frac{\sqrt{2\pi} K^2}{8}&\frac{1}{(2\pi i)^4} \int_{(1+\varepsilon)}\int_{(1/2+3\varepsilon)}\int_{(2)}\int_{(1+3\varepsilon)} \tilde{\psi_1}(s_1)\tilde{\psi_2}(s_2)\Gamma(s_3)\tilde{\hbar}^{\Re}_{1+s_1+s_2-s_3}(s_4)\cdot \\
&\times \zeta(1+s_1)\zeta(1+s_2)\zeta(2+s_1+s_2-2s_3-2s_4)\zeta(1+s_1-s_4)\zeta(-s_1+2s_3+s_4)  \cdot \nonumber \\
&\times \zeta(s_4)/\zeta(1+s_4)(4\pi)^{-s_1-s_2}4^{s_3}(8\pi)^{s_4} K^{s_1+s_2-s_3-2s_4} ds_1d_2ds_3ds_4.\nonumber
\end{align}
The zeta functions $\zeta(1+s_1)$ and $\zeta(1+s_2)$ arise from summing over $d_1, d_2$ respectively. The summation over $n_2$ yields $\zeta(2+s_1+s_2-2s_3-2s_4)$, while summing over $m_1$ gives rise to $\zeta(1+s_1-s_4)$. The $m_2$-variable leads to the factor $\zeta(-s_1+2s_2+s_4)$ and finally the summation over $c$ gives $\zeta(s_4)/\zeta(1+s_4)$. Here we used that $\sum_{c}\frac{\phi{c}}{c^s} =\frac{\zeta(s-1)}{\zeta(s)}$ for $Re(s)>2$. To evaluate expression $\mathcal{OD}$ asymptotically we will iteratively shift the contours and pick up poles. 

We start to compute the contour integral \eqref{offdiagonalcomplete3} by shifting the line from $\Re(s_2)=2$ to $\Re(s_2)=-100$. We pick up a simple pole at $s_2=0$ and $s_2=-1-s_1+2s_3+2s_4$. The new line integral is negligible by the rapid decay of $\tilde{\psi_1}, \tilde{\psi_2}, \tilde{\hbar}$ and the Gamma function. The residue at $s_2=0$ is given by
\begin{align}\label{offdiagonalcomplete5}
\frac{\sqrt{2\pi} K^2}{8}&\frac{1}{(2\pi i)^3} \int_{(1+\varepsilon)}\int_{(1/2+3\varepsilon)}\int_{(1+3\varepsilon)} \tilde{\psi_1}(s_1)\tilde{\psi_2}(0)\Gamma(s_3)\tilde{\hbar}^{\Re}_{1+s_1-s_3}(s_4)\cdot \\
&\times \zeta(1+s_1)\zeta(2+s_1-2s_3-2s_4)\zeta(1+s_1-s_4)\zeta(-s_1+2s_3+s_4)\zeta(s_4)/\zeta(1+s_4)  \cdot \nonumber \\
&\times (4\pi)^{-s_1}4^{s_3}(8\pi)^{s_4} K^{s_1-s_3-2s_4} ds_1ds_3ds_4.\nonumber
\end{align}
Moving the line $\Re(s_1)=1+2\varepsilon$ to $\Re(s_1)=-100$ yields poles at $s_1=0$ and $s_1=s_4$. First, we consider the residue at $s_1=0$, which is given by
\begin{align}\label{offdiagonalcomplete6}
\frac{\sqrt{2\pi} K^2}{8}&\frac{1}{(2\pi i)^2} \int_{(1+\varepsilon)}\int_{(1/2+2\varepsilon)} \tilde{\psi_1}(0)\tilde{\psi_2}(0)\Gamma(s_3)\tilde{\hbar}^{\Re}_{1-s_3}(s_4)\cdot \\
&\times \zeta(2-2s_3-2s_4)\zeta(1-s_4)\zeta(2s_3+s_4)\zeta(s_4)/\zeta(1+s_4)  4^{s_3}(8\pi)^{s_4} K^{-s_3-2s_4} ds_3ds_4.\nonumber
\end{align}
Expression \eqref{offdiagonalcomplete6} is negligible upon shifting $\Re(s_3)=1/2+3\varepsilon$ to $\Re(s_3)=100$ and the rapid decay of $\tilde{\hbar}$ and the Gamma function. On the other hand the residue of the pole at $s_1=s_4$ of \eqref{offdiagonalcomplete5} leads to
\begin{align}\label{offdiagonalcomplete7}
\frac{\sqrt{2\pi} K^2}{8}&\frac{1}{(2\pi i)^2} \int_{(1+\varepsilon)}\int_{(1/2+2\varepsilon)}\tilde{\psi_1}^{\Re}(s_4)\tilde{\psi_2}(0)\Gamma(s_3)\tilde{\hbar}^{\Re}_{1+s_4-s_3}(s_4)\cdot \\
&\times \zeta(2-2s_3-s_4)\zeta(2s_3)\zeta(s_4)  (4\pi)^{-s_4}4^{s_3}(8\pi)^{s_4} K^{-s_3-s_4} ds_3ds_4.\nonumber
\end{align}
This integral is again negligible after sending $\Re(s_3)=1/2+3\varepsilon$ to $\Re(s_3)=100$, as we pick up no poles and we can bound everything trivially. In total we found that the contribution from poles that arise after $s_2=0$ is negligible. We now evaluate the residue of  \eqref{offdiagonalcomplete5} at the pole $s_2=-1-s_1+2s_3+2s_4$. We get
\begin{align}\label{offdiagonalcomplete8}
\frac{\sqrt{2\pi} K^2}{8}&\frac{1}{(2\pi i)^3} \int_{(1+\varepsilon)}\int_{(1/2+3\varepsilon)}\int_{(1+3\varepsilon)} \tilde{\psi_1}(s_1)\tilde{\psi_2}(-1-s_1+2s_3+2s_4)\Gamma(s_3)\tilde{\hbar}^{\Re}_{s_3+2s_4}(s_4)\cdot \\
&\times \zeta(1+s_1)\zeta(-s_1+2s_3+2s_4)\zeta(1+s_1-s_4)\zeta(-s_1+2s_3+s_4)\zeta(s_4)/\zeta(1+s_4)  \cdot \nonumber \\
&\times (4\pi)^{1-2s_3-2s_4}4^{s_3}(8\pi)^{s_4} K^{-1+s_3} ds_1ds_3ds_4.\nonumber
\end{align}
Next we move the line $\Re(s_3)=1/2+3\varepsilon$ to $\Re(s_3)=\varepsilon$ (stopping before the pole of the Gamma function) and capture poles at $s_3=1/2+s_1/2-s_4$ and $s_3=1/2+s_1/2-s_4/2$. The new line integrals contribute at most $O(K^{1+\varepsilon})$. The residue at $s_3=1/2+s_1/2-s_4$ is given by
\begin{align}\label{offdiagonalcomplete9}
\frac{\sqrt{2\pi} K^2}{8}&\frac{1}{(2\pi i)^2} \int_{(1+\varepsilon)}\int_{(1+3\varepsilon)} \tilde{\psi_1}(s_1)\tilde{\psi_2}(0)\Gamma(1/2+s_1/2-s_4)\tilde{\hbar}^{\Re}_{1/2+s_1/2+s_4}(s_4)\cdot \\
&\times \zeta(1+s_1)\frac{1}{2}\zeta(1+s_1-s_4)\zeta(1-s_4)\zeta(s_4)/\zeta(1+s_4) \cdot \nonumber\\
&\times (4\pi)^{-s_1}4^{1/2+s_1/2-s_4}(8\pi)^{s_4} K^{-1/2+s_1/2-s_4} ds_1ds_4.\nonumber
\end{align}
We then shift $\Re(s_1)=1+3\varepsilon$ to $\Re(s_1)=-1+\varepsilon$ and pick up simple poles at $s_1=0$, $s_1=s_4$ from the zeta functions and the simple pole $s_1=-1+2s_4$ from the Gamma function. The new line integral is clearly negligible. The residue from the gamma function contributes at most $O(K)$. The residue at $s_1=0$, given by
\begin{align}\label{offdiagonalcomplete10}
\frac{\sqrt{2\pi} K^2}{8}&\frac{1}{(2\pi i)^2} \int_{(1+\varepsilon)}\tilde{\psi_1}(0)\tilde{\psi_2}(0)\Gamma(1/2-s_4)\tilde{\hbar}^{\Re}_{1/2+s_4}(s_4)\cdot \\
&\times \frac{1}{2}\zeta(1-s_4)^2\zeta(s_4)/\zeta(1+s_4) 4^{1/2-s_4}(8\pi)^{s_4} K^{-1/2-s_4}ds_4,\nonumber
\end{align}
is clearly also negligible. At the pole $s_1=s_4$ the residue is given by
\begin{align}\label{offdiagonalcomplete11}
\frac{\sqrt{2\pi} K^2}{8}&\frac{1}{(2\pi i)^2} \int_{(1+\varepsilon)} \tilde{\psi_1}(s_4)\tilde{\psi_2}(0)\Gamma(1/2-s_4/2)\tilde{\hbar}^{\Re}_{1/2+3/2s_4}(s_4)\cdot \\
&\times \frac{1}{2}\zeta(1-s_4)\zeta(s_4) (4\pi)^{-s_4}4^{1/2-s_4/2}(8\pi)^{s_4} K^{-1/2-s_4/2} ds_4,\nonumber
\end{align}
which again contributes only to the error term. It remains to compute the chain of residues of \eqref{offdiagonalcomplete8} starting with $s_3=1/2+s_1/2-s_4/2$. At this point we get
\begin{align}\label{offdiagonalcomplete12}
\frac{\sqrt{2\pi} K^2}{8}&\frac{1}{(2\pi i)^2} \int_{(1+\varepsilon)}\int_{(1+3\varepsilon)} \tilde{\psi_1}(s_1)\tilde{\psi_2}(s_4)\Gamma(1/2+s_1/2-s_4/2)\tilde{\hbar}^{\Re}_{1/2+s_1/2+3/2s_4}(s_4)\cdot \\
&\times \zeta(1+s_1)\zeta(1+s_1-s_4)\frac{1}{2}\zeta(s_4)  (4\pi)^{-s_1-s_4}4^{1/2 +s_1/2-s_4/2}(8\pi)^{s_4} K^{-1/2+s_1/2-s_4/2} ds_1ds_4.\nonumber
\end{align}
We then shift $\Re(s_1)=1+3\varepsilon$ to $\Re(s_1)=2\varepsilon$ and pick up a simple pole $s_1=s_4$. The new line integral is bounded by $O(K^{1+\epsilon})$ and is therefore negligible. Our expected main term, the residue of the pole $s_1=s_4$, is given by
\begin{align}\label{offdiagonalcomplete13}
&\frac{\sqrt{2\pi} K^{3/2}}{8} \frac{1}{2\pi i} \int_{(1+\varepsilon)}\tilde{\psi_1}(s_4)\tilde{\psi_2}(s_4)\Gamma(1/2)\tilde{\hbar}^{\Re}_{1/2+2s_4}(s_4) \zeta(1+s_4)\zeta(s_4) (2\pi)^{-s_4} ds_4.
\end{align}
We now shift the line $\Re(s_4)=1+\varepsilon$ to $\Re(s_4)=\varepsilon$ to simplify this expression. Note that the residue of the pole at $s_4=1$ is $0$, since $\tilde{\hbar}^{\Re}_{3/2}(1)=0$. On the new line $\Re(s_4)=\varepsilon$ we can explicitly evaluate $\tilde{\hbar}^{\Re}_{1/2+2s_4}(s_4)$, leading to 
\begin{align}\label{offdiagonalcomplete14}
&K^{3/2} \cdot \int_0^\infty \frac{h(\sqrt{u})u^{1/4}}{\sqrt{2\pi u}} du \cdot \frac{\sqrt{2}\pi}{8} \frac{1}{2\pi i} \int_{(\varepsilon)}\tilde{\psi_1}(s_4)\tilde{\psi_2}(s_4)\Gamma(s_4)\cos(\pi s/2) \zeta(1+s_4)\zeta(s_4) (2\pi)^{-s_4} ds_4.
\end{align}
Finally, we use the functional equation
$$\zeta(1-s)=2(2\pi)^{-s}\cos(\pi s/2)\Gamma(s)\zeta(s)$$ so that the off-diagonal is up to an error term of size $O(K^{5/4+\varepsilon})$ equal to
$$K^{3/2} \cdot \int_0^\infty \frac{h(\sqrt{u})u^{1/4}}{\sqrt{2\pi u}}du \cdot \frac{\sqrt{2}\pi}{16}\frac{1}{2\pi i} \int_{(\varepsilon)} \tilde{\psi_1}(s_4)\tilde{\psi_2}(s_4)\zeta(1+s_4)\zeta(1-s_4)ds_4.$$
This matches exactly with term \eqref{diagonalstep6} from the diagonal, if we suppose that $\psi_1(y)$ is even, i.e. $\psi_1(y)=\psi_1(1/y)$ and thus $\tilde{\psi_1}(s)=\tilde{\psi_1}(-s)$.
\end{proof}

\subsection{Proof of the main theorem}
\begin{proof}[Proof of Theorem \ref{mainthm}]
Recall the definition of $\mathcal{E}_{\psi}$ (se \eqref{errorterm}) and $S_{\psi}$ (see \eqref{shiftedconvolutiondefi}). The work of Luo and Sarnak (\cite[Section 5]{luoandsarnakMassEquidistributionHecke}) shows that
\begin{equation}\label{LuoSarnak}\sum_{k\equiv 0\mod{2}} h\Big(\frac{k-1}{K}\Big) \sum_{f\in H_k} L(1, \sym^2 f) |E_\psi|^2 \ll K^{1+\varepsilon}.
\end{equation}
Moreover, we have
\begin{align*}
V(\psi_1, \psi_2) &=\sum_{k\equiv 0\mod{2}} h\Big(\frac{k-1}{K}\Big) \sum_{f\in H_k} L(1, \sym^2 f) (S_{\psi_1} + E_{\psi_1})(S_{\psi_2}+E_{\psi_2})\\
&=\sum_{k\equiv 0\mod{2}} h\Big(\frac{k-1}{K}\Big) \sum_{f\in H_k} L(1, \sym^2 f) (S_{\psi_1}S_{\psi_2}+S_{\psi_1}E_{\psi_2}+S_{\psi_2}E_{\psi_1}+E_{\psi_1}E_{\psi_2}).
\end{align*}
We evaluated the main term
$$\mathcal{M}(\psi_1, \psi_2)=\sum_{k\equiv 0\mod{2}} h\Big(\frac{k-1}{K}\Big) \sum_{f\in H_k} L(1, \sym^2 f) S_{\psi_1}S_{\psi_2}=\mathcal{D}+\mathcal{OD},$$ in Section \ref{VarianceComputation} with Lemma \ref{diagonal} and Lemma \ref{offdiagonalfinaloutcome}. Theorem \ref{mainthm} follows then from the Cauchy-Schwarz ineqality and the bound \eqref{LuoSarnak}.
\end{proof}
\bibliographystyle{alpha}
\bibliography{Bib}

\end{document}